\definecolor{ballblue}{rgb}{0.13, 0.67, 0.8}
\DeclareMathOperator{\lspan}{span}                          % linear span
\DeclareMathOperator{\conv}{conv}                           % convex hull
\DeclareMathOperator{\supp}{supp}                           % support
\DeclareMathOperator{\Lip}{Lip}                             % Lipschitz functions
\DeclareMathOperator{\Br}{Br}                               % branching points of a tree
\newcommand{\NN}{\mathbb{N}}                                % natural numbers
\newcommand{\RR}{\mathbb{R}}                                % real numbers
\newcommand{\abs}[1]{\left|{#1}\right|}                     % absolute value
\newcommand{\pare}[1]{\left({#1}\right)}                    % parentheses
\newcommand{\set}[1]{\left\{{#1}\right\}}                   % set by extension
\newcommand{\norm}[1]{\left\|{#1}\right\|}                  % norm
\newcommand{\dual}[1]{{#1}^\ast}                            % dual or adjoint
\newcommand{\duality}[1]{\left<{#1}\right>}                 % dual action
\newcommand{\cl}[1]{\overline{#1}}                          % closure
\newcommand{\restrict}{\mathord{\upharpoonright}}           % restriction symbol
\newcommand{\lipfree}[1]{\mathcal{F}({#1})}                 % Lipschitz-free space
\newcommand{\lipnorm}[1]{\norm{#1}_L}                       % Lipschitz norm/constant
\newcommand{\meas}[1]{\mathcal{M}({#1})}                    % Radon measure space
\newcommand{\wt}[1]{\widetilde{#1}}                         % shorthand for the square de Leeuw domain
\newcommand{\bwt}[1]{\beta\wt{#1}}                          % shorthand for the full de Leeuw domain
\newcommand{\pp}{\mathfrak{p}}                              % projection map in de Leeuw domain
\newcommand{\opr}[1]{\mathcal{M}_{\mathrm{op}}(#1)}         % optimal positive representations in the de Leeuw domain
\newcommand{\norming}[1]{\mathcal{N}(#1)}                   % set of norming functions
\newcommand{\rcomp}[1]{#1^\mathcal{R}}                    % Lipschitz realcompactification
\newcommand{\ucomp}[1]{#1^\mathcal{U}}                    % uniform compactification
\newcommand{\labeltext}[2]{%
  \@bsphack
  \csname phantomsection\endcsname % in case hyperref is used
  \def\@currentlabel{#1}{\label{#2}}%
  \@esphack
}
\theoremstyle{plain}
\newtheorem{theorem}{Theorem}[section]
\newtheorem{lemma}[theorem]{Lemma}
\newtheorem{corollary}[theorem]{Corollary}
\newtheorem{proposition}[theorem]{Proposition}
\newtheorem*{claim}{Claim}
\theoremstyle{definition}
\newtheorem*{definition*}{Definition}
\newtheorem{definition}[theorem]{Definition}
\newtheorem{example}[theorem]{Example}
\newtheorem{question}[theorem]{Question}
\theoremstyle{remark}
\newtheorem{remark}[theorem]{Remark}
\numberwithin{equation}{section}
\begin{document}

\title{Convex integrals of molecules in Lipschitz-free spaces}

\author[R. J. Aliaga]{Ram\'on J. Aliaga}
\address[R. J. Aliaga]{Instituto Universitario de Matem\'atica Pura y Aplicada,
Universitat Polit\`ecnica de Val\`encia,
Camino de Vera S/N,
46022 Valencia, Spain}
\email{raalva@upvnet.upv.es}

\author[E. Perneck\'a]{Eva Perneck\'a}
\address[E. Perneck\'a]{Faculty of Information Technology, Czech Technical University in Prague, Th\'akurova 9, 160 00, Prague 6, Czech Republic}
\email{perneeva@fit.cvut.cz}

\author[R. J. Smith]{Richard J. Smith}
\address[R. J. Smith]{School of Mathematics and Statistics, University College Dublin, Belfield, Dublin 4, Ireland}
\email{richard.smith@maths.ucd.ie}

\date{}

% ABSTRACT
\begin{abstract}
We introduce convex integrals of molecules in Lipschitz-free spaces $\mathcal{F}(M)$ as a continuous counterpart of convex series considered elsewhere, based on the de Leeuw representation.
Using optimal transport theory, we show that these elements are determined by cyclical monotonicity of their supports, and that under certain finiteness conditions they agree with elements of $\mathcal{F}(M)$ that are induced by Radon measures on $M$, or that can be decomposed into positive and negative parts. We also show that convex integrals differ in general from convex series of molecules.
Finally, we present some standalone results regarding extensions of Lipschitz functions which, combined with the above, yield applications to the extremal structure of $\mathcal{F}(M)$. In particular, we show that all elements of $\mathcal{F}(M)$ are convex series of molecules when $M$ is uniformly discrete and identify all extreme points of the unit ball of $\mathcal{F}(M)$ in that case.
% 145 words
\end{abstract}

% KEYWORDS

\subjclass[2020]{Primary 46B20; Secondary 46E15, 49Q22.}
%46B20 (Geometry and structure of normed linear spaces)
%46E15 (Banach spaces of continuous, differentiable or analytic functions)
%49Q22 (Optimal transportation)

\keywords{Lipschitz-free space, de Leeuw representation, Lipschitz function, optimal transport, cyclical monotonicity, extreme point}

\maketitle

% MAIN DOCUMENT

\section{Introduction}

This paper is framed within the theory of Lipschitz and Lipschitz-free spaces. In the following, $M$ will be a complete metric space with metric $d$, where a fixed base point $0\in M$ has been designated. Given a function $f:M\to\RR$, its (optimal) Lipschitz constant is denoted by
$$
\lipnorm{f} = \sup\set{\frac{f(x)-f(y)}{d(x,y)} \,:\, x\neq y\in M} .
$$
We write $\Lip(M)$ for the space of all real-valued Lipschitz functions on $M$, and $\Lip_0(M)$ for the subspace consisting of all $f\in\Lip(M)$ such that $f(0)=0$. It is straightforward to check that $(\Lip_0(M),\lipnorm{\cdot})$ is a Banach space. For every $x\in M$, the evaluation functional $\delta(x):f\in\Lip_0(M)\mapsto f(x)$ is an element of the dual $\dual{\Lip_0(M)}$, and $\delta:M\to\dual{\Lip_0(M)}$ is an isometric embedding. The \emph{Lipschitz-free space} is then defined as
$$
\lipfree{M} = \cl{\lspan}\,\delta(M) \subset \dual{\Lip_0(M)} ,
$$
and it is actually an isometric predual of $\Lip_0(M)$. We recommend Weaver's monograph \cite{Weaver2} for reference, where Lipschitz-free spaces are called Arens-Eells spaces and $\lipfree{M}$ is denoted \AE$(M)$.

Lipschitz and Lipschitz-free spaces are closely connected to the theory of optimal transport, so it is unsurprising that they appear in various branches of mathematics. Their main applications to Banach space theory are often stated to be in non-linear geometry, and many of them can already be found in the seminal papers by Godefroy and Kalton \cite{GK,Kalton04}; for a survey of those and their implications, see \cite{Godefroy_survey,GLZ}. In addition, Lipschitz and Lipschitz-free spaces have become handy tools for the construction of examples and counterexamples in linear Banach space theory; see for instance \cite[Example 4.4]{AP_rmi}, \cite[Section 3]{Veeorg}, or \cite[Section 2]{KaasikVeeorg}.

The most important elements of $\lipfree{M}$ are arguably the so-called \emph{(elementary) molecules}
$$
m_{xy} = \frac{\delta(x)-\delta(y)}{d(x,y)} \in S_{\lipfree{M}}
$$
for pairs of points $x\neq y\in M$. They represent the action of taking incremental quotients of functions $f\in\Lip_0(M)$. In particular, the set of all elementary molecules on $M$ is norming for $\Lip_0(M)$ and so its closed convex hull is the whole unit ball $B_{\lipfree{M}}$. It follows that every $m\in S_{\lipfree{M}}$ can be written, for any $\varepsilon>0$, as a series of molecules
\begin{equation}
\label{eq:series_of_molecules}
m = \sum_{n=1}^\infty a_nm_{x_ny_n}
\end{equation}
where $\sum_n\abs{a_n}\leq 1+\varepsilon$ (see e.g. \cite[Lemma 2.1]{AP_rmi}); we may and will assume $a_n>0$ for all $n$ by swapping $x_n$ and $y_n$, removing null terms, and/or repeating terms in finite sums.
Clearly $\sum_n a_n\geq 1$ for any representation of the form \eqref{eq:series_of_molecules}, so the natural question arises whether the optimal value $\sum_n a_n=1$ can be attained. The answer is negative for general $m$, but it is positive in particular cases. For instance, if $m$ has finite support (i.e. it is a finite sum of evaluation functionals) then it can even be written as a finite convex combination of molecules which, in fact, codifies the solution to a finite optimal transport problem represented by $m$ (see e.g. \cite[Proposition 3.16]{Weaver2}).

In \cite{ARZ}, the term \emph{(sum of a) convex series of molecules} was used for those elements $m\in S_{\lipfree{M}}$ that could be written in the form \eqref{eq:series_of_molecules} with $\sum_n a_n=1$. These elements are particularly easy to manipulate in convexity arguments, and this can be exploited to characterise precisely when they satisfy some isometric properties. For instance, \cite[Theorem 3.1]{ARZ} describes which of these elements are points of G\^ateaux differentiability of the norm of $\lipfree{M}$, and it is easy to see that any extreme point of $B_{\lipfree{M}}$ that is a convex series of molecules must in fact be an elementary molecule (see \cite[Remark 3.4]{APPP}).

This paper will be focused on a generalisation of that notion. Note that the expression \eqref{eq:series_of_molecules} describes a discrete, necessarily countable sum. We wish to consider a continuous counterpart with the hope that the same type of convexity arguments will still apply to them. Intuitively, this should be achieved by substituting the infinite convex sum by a probability integral of the form $m=\int m_{xy}\,d\mu(x,y)$. A natural name for those $m$ is then \emph{convex} (or \emph{probability}) \emph{integrals of molecules}. While it is possible to construct them explicitly as Bochner integrals in $\lipfree{M}$, it will be more convenient to do so indirectly through a method for integral representation of functionals on $\Lip_0(M)$ due to de Leeuw \cite{deLeeuw}.
De Leeuw representations have already proved their usefulness in questions about extremal structure \cite{Aliaga,AG,AP_rmi,Weaver95} and duality \cite{deLeeuw,Weaver96} in Lipschitz-free spaces.

The structure of the paper is as follows. We first spend the rest of this section establishing the notation and required background facts, including a primer on the relationship between Lipschitz-free spaces and optimal transport theory. After that, in Section \ref{sec:optimal} we introduce the integral representation due to de Leeuw and use it to properly define convex integrals of molecules. We show that, among all de Leeuw representations of functionals, convex integrals of molecules are determined by cyclical monotonicity of their support (Theorem \ref{th:cyclic_monotone_optimal}).

In Section \ref{sec:positive} we show that convex integrals of molecules are bona fide generalisations of optimal couplings from optimal transport theory, and that any functional that is either representable as a measure on $M$, or what we call ``majorisable on annuli'' (which includes all functionals expressible as the difference of two positive ones), is a convex integral of molecules (Theorems \ref{th:major_radon_wt} and \ref{th:majorizable_convex_integrals}). In particular, all elements of $\lipfree{M}$ are convex series of molecules when $M$ is uniformly discrete. We also derive some consequences for general expressions as convex series of molecules.

Next, in Section \ref{sec:negative} we provide some counterexamples to representability as convex integrals of molecules. Most significantly, if $M$ contains isometrically a subset of $\RR$ with positive measure then there are elements of $\lipfree{M}$ that are not convex integrals of molecules (Theorem \ref{th:isometric_fat_cantor_example}). We also show that convex series and integrals of molecules are different notions in general (Proposition \ref{pr:integral_not_series}). The overall message of this section is that the picture regarding these notions is subtle and complex.

Finally, we devote Section \ref{sec:extensions} to the proof of some standalone results about the relationship between metric alignment and extensions of Lipschitz functions, and we apply them in Section \ref{sec:extreme} to the study of the extremal structure of the unit ball $B_{\lipfree{M}}$. A well-known conjecture states that all extreme points of $B_{\lipfree{M}}$ must be elementary molecules (see e.g. \cite{Aliaga}). We show that this holds for extreme points that are representable as a convex integral of molecules (Theorem \ref{th:extreme_wt}) which, in particular, proves the conjecture when $M$ is uniformly discrete. We also obtain information on the structure of faces of $B_{\lipfree{M}}$ (see Theorem \ref{th:alignment_compact}) that, in turn, furnishes a new and more straightforward proof of the characterisation of extreme molecules given originally in \cite{AP_rmi}.

Convex series and integrals of molecules are particular types of de Leeuw representations. This paper is part of a larger programme to study the properties of general de Leeuw representations, and is expanded upon in \cite{APS_future}.

\subsection{Notation and preliminaries on Lipschitz-free spaces}

All topological spaces considered hereafter will be Hausdorff. Recall that the Stone-\v{C}ech compactification $\beta X$ of a metrisable (or, more generally, Tychonoff) space $X$ is the topologically unique compact space containing $X$ as a dense subset and such that any continuous mapping from $X$ into a compact space admits a continuous extension to $\beta X$. We will identify any such function on $X$ with its continuous extension to $\beta X$ without labelling it differently.
In particular, when $M$ is a metric space, for $p\in M$ and $\xi\in\beta M$ we will denote by $d(p,\xi)$ or $d(\xi,p)$ the value at $\xi$ of the continuous extension of the one-variable mapping $x\mapsto d(p,x)$ from $M$ to $\beta M$, with range in $[0,\infty]$. We emphasise that the expression $d(\xi,\eta)$ is not meaningful for general $\xi,\eta\in\beta M$, as it is not always possible to extend the two-variable function $d:M\times M\to\RR$ to $\beta M\times\beta M$ continuously: that expression is only meaningful when one of the coordinates is fixed and belongs to $M$.

By a measure on a Hausdorff space $X$ we will always mean a Borel, countably additive measure, and by a \emph{Radon measure} we will mean one that is regular, tight, and finite (i.e. has finite total variation). If $X$ is separable and completely metrisable, then all finite measures on $X$ are Radon (see \cite[Theorems 1.4.8 and 7.1.7]{Bogachev}).
The Banach space of all Radon measures on $X$ endowed with the total variation norm will be denoted by $\meas{X}$; recall that $\meas{X}=\dual{C(X)}$ when $X$ is compact. The \emph{support} $\supp(\mu)$ of a measure $\mu$ is defined as the set of all $x\in X$ such that $\abs{\mu}(U)>0$ for all open neighbourhoods $U$ of $x$. It is always a closed set, and it is also separable if $X$ is metrisable and $\mu$ is finite.
If $\mu$ is Radon then it is concentrated on its support, and $\abs{\mu}(\supp(\mu))=\norm{\mu}$ (see \cite[Theorem 7.2.9]{Bogachev}).

The point mass on $x\in X$ will be denoted as $\delta_x\in\meas{X}$ (note that this is different from the evaluation functional $\delta(x) \in \lipfree{M}$, $x \in M$, introduced above). For Borel $E\subset X$ and $\mu\in\meas{X}$, the restriction $\mu\restrict_E\in\meas{X}$ will be defined by $\mu\restrict_E(A)=\mu(A\cap E)$ for any Borel $A\subset X$.
For $\mu,\lambda\in\meas{X}$, the notation $\mu\ll\lambda$ will stand for ``$\mu$ is absolutely continuous with respect to $\lambda$'', and $\mu\perp\lambda$ for ``$\mu$ and $\lambda$ are mutually singular''.
If $X$ and $Y$ are Hausdorff spaces and $f:X\to Y$ is Borel, then for any positive Borel measure $\mu$ on $X$ the \emph{pushforward measure} $f_\sharp\mu$ is the positive Borel measure on $Y$ defined by $f_\sharp\mu(E)=\mu(f^{-1}(E))$ for all Borel $E\subset Y$. It has the property that $\int_Y g\,d(f_\sharp\mu)=\int_X (g\circ f)\,d\mu$ for every Borel function $g:Y\to\RR$ such that $g\circ f$ is $\mu$-integrable. If moreover $\mu$ is Radon and $f$ is continuous then $f_\sharp\mu$ is Radon as well.

If $M$ is a metric space, the \emph{metric segment} between points $p,q\in M$ is the set
$$
[p,q] = \set{x\in M \,:\, d(p,x)+d(x,q)=d(p,q)} .
$$
If $x\in[p,q]$, we will also say that $p,x,q$ are \emph{metrically aligned} or that $(p,x,q)$ is a \emph{metric triple}.

As is customary, we will write $B_X$ and $S_X$ for the unit ball and unit sphere of a Banach space $X$, which will be usually one of the spaces $\lipfree{M}$ and $\Lip_0(M)$ introduced above. In an abuse of notation, we will also write $B_{\Lip(M)}$ and $S_{\Lip(M)}$ for the set of all $f:M\to\RR$ with $\lipnorm{f}\leq 1$ and $\lipnorm{f}=1$, respectively. Recall that $\dual{\lipfree{M}}=\Lip_0(M)$ canonically, and the weak$^\ast$ topology induced by $\lipfree{M}$ agrees on $B_{\Lip_0(M)}$ with the topology of pointwise convergence. Changing the base point of $M$ results in linearly isometric Lipschitz and Lipschitz-free spaces, respectively.

One key feature of Lipschitz-free spaces is their linearisation property: given any pair of metric spaces $M,N$ with base points $0_M,0_N$, it is possible to extend any Lipschitz map $\psi:M\to N$ such that $\psi(0_M)=0_N$ to a bounded linear operator $\widehat{\psi}:\lipfree{M}\to\lipfree{N}$ such that $\widehat{\psi}\circ\delta_M=\delta_N\circ\psi$; that is, $\widehat{\psi}$ is an extension of $\psi$ if we identify $M,N$ with their images $\delta_M(M),\delta_N(N)$ in their respective Lipschitz-free spaces. Moreover, $\|\widehat{\psi}\|$ equals the Lipschitz constant of $\psi$. In particular, if $\psi$ is an isometric embedding then so is $\widehat{\psi}$. It follows that the Lipschitz-free space over a subset $N\subset M$ containing $0$ can be identified with a subspace of $\lipfree{M}$, namely $\lipfree{N}=\cl{\lspan}\,\delta(N)$. The \emph{support} $\supp(m)$ of a functional $m\in\lipfree{M}$ is then defined as the smallest closed subset of $M$ such that $m\in\lipfree{\supp(m)\cup\set{0}}$. The existence of such a set is not immediate; the proof is given in \cite{AP_rmi,APPP}. The set $\supp(m)$ is always separable, and the value of $\duality{m,f}$ for $f\in\Lip_0(M)$ depends only on the restriction of $f$ to $\supp(m)$. A point $x\in M$ belongs to $\supp(m)$ if and only if every neighbourhood of $x$ contains the support of some $f\in\Lip_0(M)$ such that $\duality{m,f}\neq 0$ \cite[Proposition 2.7]{APPP}. Recall also that elements of finite support are precisely the finite linear combinations of evaluation functionals, and therefore dense in $\lipfree{M}$.

A functional $m\in\lipfree{M}$ is said to be \emph{positive} if $\duality{m,f}\geq 0$ whenever $f\in\Lip_0(M)$ satisfies $f\geq 0$ pointwise.
As there is a unique pointwise largest function in $B_{\Lip_0(M)}$, namely $x\mapsto d(x,0)$, we always have $\norm{m}=\duality{m,d(\cdot,0)}$ for positive $m$.

\subsection{Preliminaries on optimal transport}
\label{subsec:optimal_transport}

The classical Monge-Kantorovich transportation problem involves moving a certain amount of mass from its current position into a new one at the least possible cost. The current and desired mass distributions are modelled as Radon probability measures $\mu$ and $\nu$ on a metric space $M$, respectively. We assume moreover that $\mu$ and $\nu$ have \emph{finite first moment}, that is
\begin{equation}
\label{eq:finite_first_moment}
\int_M d(x,0)\,d\abs{\mu}(x) < \infty
\end{equation}
for some (and hence all) $0\in M$, and likewise for $\nu$. Possible ways of transferring mass from $\mu$ to $\nu$ are represented by \emph{couplings}, or \emph{transport plans}, from $\mu$ to $\nu$: these are the probability measures $\pi\in\meas{M\times M}$ whose marginals are $\mu$ and $\nu$, that is $(\pp_1)_\sharp\pi=\mu$ and $(\pp_2)_\sharp\pi=\nu$, where $\pp_i:M\times M\to M$ are the coordinate projections.
The cost of transferring mass from location $x$ to location $y$ is assumed to be proportional to the distance $d(x,y)$. The goal is then to find an optimal coupling that minimises the total transportation cost
$$
C(\pi)=\int_{M\times M}d(x,y)\,d\pi(x,y) .
$$
Note that the product measure $\mu\times\nu$ is a coupling between $\mu$ and $\nu$ and
\begin{align*}
C(\mu\times\nu) &\leq \int_{M\times M}(d(x,0)+d(y,0))\,d(\mu\times\nu)(x,y) \\
&= \norm{\nu}\int_M d(x,0)\,d\mu(x) + \norm{\mu}\int_M d(y,0)\,d\nu(y) < \infty
\end{align*}
by \eqref{eq:finite_first_moment}. Thus there are always valid transport plans, and the optimal cost is always finite.
Using a compactness argument, one can prove that there always exists a (not necessarily unique) optimal coupling that minimises the cost, and the Kantorovich duality theorem provides a formula for the optimal cost in terms of Lipschitz functions.

\begin{theorem}[Kantorovich duality theorem]
\label{th:kantorovich}
Let $\mu,\nu$ be Radon probability measures on $M$ with finite first moment, and let $\Pi\subset\meas{M\times M}$ be the set of all couplings from $\mu$ to $\nu$. Then
$$
\inf_{\pi\in\Pi}\,C(\pi) = \sup_{f\in B_{\Lip(M)}} \set{\int_M f\,d\mu - \int_M f\,d\nu}
$$
and the infimum is attained by some coupling $\pi\in\Pi$.
\end{theorem}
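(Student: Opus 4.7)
The plan is to follow the classical route: establish weak duality, obtain existence of an optimal coupling by a compactness argument, and then derive strong duality by constructing a Kantorovich potential out of the cyclic monotonicity of the support of an optimal plan.

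First I would dispatch the easy inequality $\sup\leq\inf$. For any $\pi\in\Pi$ and $f\in B_{\Lip(M)}$, the marginal condition gives
$$
\int_M f\,d\mu - \int_M f\,d\nu = \int_{M\times M}(f(x)-f(y))\,d\pi(x,y) \leq \int_{M\times M} d(x,y)\,d\pi(x,y)=C(\pi),
$$
and the finite first moment assumption ensures that everything in sight is integrable.

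Next, for the existence of a minimizer: since $\mu$ and $\nu$ are Radon, each is concentrated on a separable closed subset, so we may pass to a Polish subspace and assume $M$ itself is separable. The set $\Pi$ is non-empty (it contains $\mu\times\nu$), convex, and tight, because the elementary estimate $\pi((M\times M)\setminus(K_1\times K_2))\leq \mu(M\setminus K_1)+\nu(M\setminus K_2)$ lifts tightness of the marginals to tightness of $\Pi$. By Prokhorov, $\Pi$ is relatively weakly sequentially compact; it is also weakly closed since the marginal projections are weak-to-weak continuous. Finally $C$ is lower semi-continuous for weak convergence, either by writing $C(\pi)=\sup_n\int(d\wedge n)\,d\pi$ with $d\wedge n$ continuous and bounded, or by Fatou-style truncation combined with uniform integrability coming from finite first moments. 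So some $\pi^*\in\Pi$ attains the infimum.

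For strong duality I would prove that $\Gamma:=\supp(\pi^*)$ is $d$-cyclically monotone: for every $(x_1,y_1),\dots,(x_n,y_n)\in\Gamma$ and every permutation $\sigma$,
$$
\sum_{i=1}^n d(x_i,y_i) \leq \sum_{i=1}^n d(x_i,y_{\sigma(i)}).
$$
If this failed, continuity of $d$ would give a strict version of the inequality on small product neighbourhoods $U_i\times V_i$ of each $(x_i,y_i)$, and one could subtract a small piece of $\pi^*$ concentrated on $\bigcup_i U_i\times V_i$ and add it back permuted according to $\sigma$; the resulting measure is still a probability coupling of $\mu$ and $\nu$, with strictly smaller cost, contradicting optimality of $\pi^*$. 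Then, fixing $(x_0,y_0)\in\Gamma$, I would define
$$
f(x)=\inf\left\{d(x,y_n)+\sum_{i=0}^{n-1}\bigl(d(x_{i+1},y_i)-d(x_i,y_i)\bigr)\right\}-d(x_0,y_0),
$$
where the infimum runs over $n\geq 0$ and $(x_1,y_1),\dots,(x_n,y_n)\in\Gamma$. Cyclic monotonicity is precisely what is needed to show that $f$ is real-valued and that $f(x)-f(y)=d(x,y)$ whenever $(x,y)\in\Gamma$; and as an infimum of $1$-Lipschitz functions of $x$, $f$ is itself $1$-Lipschitz. Since the integrand below is $\pi^*$-a.e.\ supported on $\Gamma$,
$$
\int_M f\,d\mu-\int_M f\,d\nu=\int_{M\times M}(f(x)-f(y))\,d\pi^*(x,y)=C(\pi^*),
$$
so the supremum on the right-hand side of the duality is attained at this $f$ and equals the infimum.

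The main obstacle will be the strong duality step: both the mass-swapping argument for cyclic monotonicity (which must be carried out carefully so the modified measure is still a probability coupling with the correct marginals, particularly in the non-compact setting) and the verification that the explicit Rockafellar-style formula for $f$ is finite and saturates $d$ on $\Gamma$ — this is exactly where cyclical monotonicity is used, and is the analytic heart of the theorem.
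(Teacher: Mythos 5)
The paper does not prove Theorem~\ref{th:kantorovich}: it is stated as background, with the proof deferred to Villani's monograph for separable $M$ and to Edwards \cite{Edwards} for arbitrary $M$. So there is no in-paper argument to compare against; the only question is whether your proof is sound. The structure you follow --- weak duality from the marginal identity, existence of a minimizer by passing to the (Polish) union of supports plus Prokhorov tightness and weak lower semi-continuity of $C$ via truncation, cyclical monotonicity of $\supp(\pi^*)$ by a mass-swapping perturbation, and a Rockafellar-type potential --- is the standard modern proof and is correct in outline. It differs from what Edwards does (he works directly on arbitrary $M$ via a reduction to the compact case rather than passing to a Polish subspace), but your reduction is legitimate here because both marginals are Radon and hence have separable supports.

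Two points should be tightened. First, the displayed formula for $f$ has an indexing error: as written, the $n=1$ candidate reads $d(x,y_1)+d(x_1,y_0)-2d(x_0,y_0)$, which subtracts $d(x_0,y_0)$ twice and never subtracts $d(x_1,y_1)$. The intended Rockafellar potential is
\[
f(x)=\inf\Bigl\{d(x,y_n)-d(x_n,y_n)+d(x_n,y_{n-1})-d(x_{n-1},y_{n-1})+\cdots+d(x_1,y_0)-d(x_0,y_0)\Bigr\},
\]
the infimum over $n\geq 0$ and chains $(x_1,y_1),\dots,(x_n,y_n)\in\Gamma$ (with the $n=0$ term read as $d(x,y_0)-d(x_0,y_0)$); with your version the telescoping that makes cyclical monotonicity bite fails. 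Second, in the final duality computation you should note explicitly why the splitting $\int f\,d\mu-\int f\,d\nu=\int(f(x)-f(y))\,d\pi^*$ is legitimate: $f$ is $1$-Lipschitz, so $|f(x)|\leq |f(x_0)|+d(x,x_0)$, and the finite-first-moment hypothesis on $\mu$ and $\nu$ is exactly what makes $f$ integrable against both marginals. This is where that hypothesis enters the strong duality argument, and it is worth flagging.
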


Optimal couplings can be characterised geometrically as follows. A subset $E$ of $M\times M$ is said to be \emph{cyclically monotonic} if it satisfies the following: for every choice of points $(x_1,y_1),\ldots,(x_n,y_n)\in E$ it holds that
\begin{equation}
\label{eq:cyclical_monotonicity}
d(x_1,y_1)+d(x_2,y_2)+\ldots+d(x_n,y_n) \leq d(x_1,y_2)+d(x_2,y_3)+\ldots+d(x_n,y_1) .
\end{equation}
Then any optimal coupling between measures on $M$ is concentrated on a cyclically monotonic subset of $M\times M$. Conversely, any Radon probability measure that is concentrated on a cyclically monotonic subset of $M\times M$ is an optimal coupling between its marginals (see e.g. \cite{ST}). In particular, restrictions of optimal couplings are again optimal couplings.

The \emph{1-Wasserstein space} of $M$ is defined as the metric space $(\mathcal{W}_1(M),d_{\mathcal{W}_1})$ of all Radon probability measures on $M$ with finite first moment endowed with the Wasserstein distance $d_{\mathcal{W}_1}(\mu,\nu)$ given by the optimal transport cost from $\mu$ to $\nu$. To see that $d_{\mathcal{W}_1}$ is a metric, note that any measure $\mu\in\meas{M}$ with finite first moment induces a functional $\widehat{\mu}\in\lipfree{M}$ by integration, given by
\begin{equation}
\label{eq:functional_induced_by_measure}
\duality{\widehat{\mu},f} = \int_M f\,d\mu
\end{equation}
for $f\in\Lip_0(M)$ \cite[Proposition 4.4]{AP_measures}. Thus Theorem \ref{th:kantorovich} tells us that the optimal cost of the transport problem between probability measures $\mu,\nu\in\meas{M}$ is precisely
$$
d_{\mathcal{W}_1}(\mu,\nu)=\norm{\widehat{\mu}-\widehat{\nu}}_{\lipfree{M}}
$$
(note that the supremum in Theorem \ref{th:kantorovich} may be taken over $f\in B_{\Lip_0(M)}$ instead of $f\in B_{\Lip(M)}$, as adding a constant to $f$ does not change the resulting value).
Thus $d_{\mathcal{W}_1}$ is indeed a metric, and the mapping $\mu\mapsto\widehat{\mu}$ is an isometry from $\mathcal{W}_1(M)$ into $\lipfree{M}$. Its image is precisely the set $\cl{\conv}\,\delta(M)$, as follows easily from the density of finitely supported measures (cf. \cite[Corollary 4.16]{AP_measures}).

A standard reference for the theory of optimal transport is \cite{Villani}, although it focuses on the case where $M$ is separable. For an easy proof of the Kantorovich theorem for arbitrary $M$, see \cite{Edwards}.

\section{de Leeuw representations}
\label{sec:optimal}

We start by introducing a general representation of elements of $\dual{\Lip_0(M)}$ that can be traced back to K. de Leeuw \cite{deLeeuw}. Let us denote
$$
\wt{M} = \set{(x,y)\in M\times M \,:\, x\neq y}
$$
with the inherited metric product topology.
The \emph{de Leeuw transform} is the map $\Phi:\Lip_0(M)\to C_b(\wt{M})$ (the space of bounded continuous real-valued functions on $\wt{M}$) given by
$$
(\Phi f)(x,y) = \frac{f(x)-f(y)}{d(x,y)}
$$
for $(x,y)\in\wt{M}$. Note that $\norm{\Phi f}_\infty = \lipnorm{f}$ by definition, so $\Phi$ is a linear isometry into. By identifying each function in $C_b(\wt{M})$ with its unique continuous extension to $\bwt{M}$, we shall identify the spaces $C_b(\wt{M})$ and $C(\bwt{M})$ and thus consider $\Phi$ as an isometry into $C(\bwt{M})$. It then follows that the adjoint operator $\dual{\Phi}:\meas{\bwt{M}}\to\dual{\Lip_0(M)}$ is a non-expansive onto quotient map. Hence, for every $\phi\in\dual{\Lip_0(M)}$ there exist measures $\mu\in\meas{\bwt{M}}$ such that $\dual{\Phi}\mu=\phi$, meaning that
$$
\duality{f,\phi} = \int_{\bwt{M}}(\Phi f)\,d\mu
$$
for all $f\in\Lip_0(M)$. Any such $\mu$ will be referred to as a \emph{de Leeuw representation} of $\phi$.
The easiest examples of such representations are the Dirac deltas $\delta_{(x,y)}$ for $(x,y)\in\wt{M}$, which represent molecules $m_{xy}$ as $(\Phi f)(x,y)=\duality{m_{xy},f}$.

As we will be dealing with points in the compactification $\bwt{M}$, we need a way to identify their ``coordinates''. We will write $\pp:\bwt{M}\to\beta M\times \beta M$ for the continuous extension of the identity mapping $\wt{M}\to \beta M\times \beta M$, and $\pp_1,\pp_2:\bwt{M}\to\beta M$ for the projections onto the first and second coordinates, i.e. the extensions of $\pp_1:(x,y)\mapsto x$ and $\pp_2:(x,y)\mapsto y$. For a set $E\subset\bwt{M}$, we will also use the notation
$$
\pp_s(E)=\pp_1(E)\cup\pp_2(E)
$$
for the combined projection. We make two remarks here. First, this definition of $\pp_i$ agrees with the notation used in Section \ref{subsec:optimal_transport} for elements of $\wt{M}$, so there shall be no ambiguity. Second, note that we may use any compactification of $M$ in place of $\beta M$ in the definition of $\pp$ and $\pp_i$.

We should mention here that the framework of de Leeuw representations can be developed in an equivalent way by considering finite, regular, \emph{finitely additive} measures on $\wt{M}$ instead of Radon measures on $\bwt{M}$, which avoids ambiguity and the use of compactifications altogether. This approach is taken e.g. in \cite{HOP22,Veeorg2}.

Since $\dual{\Phi}$ is non-expansive, any de Leeuw representation $\mu\in\meas{\bwt{M}}$ of a functional $\phi\in\dual{\Lip_0(M)}$ has norm $\norm{\mu}\geq\norm{\phi}$. Our focus will be on norm-optimal representations, i.e. those with the least possible norm $\norm{\mu}=\norm{\phi}$. Every functional admits norm-optimal representations, and it is not hard to see that they can even be chosen to be positive \cite[Proposition 3]{Aliaga}. By an \emph{optimal (de Leeuw) representation} we will mean one that is both norm-optimal and positive. The collection of all positive measures on $\bwt{M}$ that are optimal de Leeuw representations (of whichever functional in $\dual{\Lip_0(M)}$ they are representing) will be denoted as
$$
\opr{\bwt{M}} = \set{\mu\in\meas{\bwt{M}} \,:\, \mu\geq 0 \text{ and } \norm{\mu}=\norm{\dual{\Phi}\mu}} .
$$
The following statement summarises some useful properties of this set.

\begin{proposition}
\label{pr:opr_facts}
\
\begin{enumerate}[label={\upshape{(\alph*)}}]
\item For any $\phi\in\dual{\Lip_0(M)}$ there is $\mu\in\opr{\bwt{M}}$ such that $\dual{\Phi}\mu=\phi$.
\item If $\mu\in\opr{\bwt{M}}$ then $c\cdot\mu\in\opr{\bwt{M}}$ for every $c\geq 0$.
\item If $\mu\in\opr{\bwt{M}}$ and $\lambda\in\meas{\bwt{M}}$ satisfies $0\leq\lambda\leq\mu$, then $\lambda\in\opr{\bwt{M}}$.
\item If $\mu\in\opr{\bwt{M}}$ and $E$ is a Borel subset of $\bwt{M}$ then $\mu\restrict_E\in\opr{\bwt{M}}$.
\end{enumerate}
\end{proposition}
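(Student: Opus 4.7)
Part (a) is not new: it is exactly the content of \cite[Proposition 3]{Aliaga}, which the authors have already cited in the discussion preceding the statement. So my plan would be simply to invoke that reference.

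Part (b) is essentially free from the definitions. If $\mu\geq 0$ then $c\mu\geq 0$ for $c\geq 0$, and since $\dual{\Phi}$ is linear and norms scale by nonnegative constants, $\norm{c\mu}=c\norm{\mu}=c\norm{\dual{\Phi}\mu}=\norm{c\dual{\Phi}\mu}=\norm{\dual{\Phi}(c\mu)}$. So $c\mu\in\opr{\bwt{M}}$.

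The heart of the proposition is (c), and the key trick is to exploit the additivity of the norm on positive measures. Write $\mu=\lambda+(\mu-\lambda)$, where both summands are positive, so $\norm{\mu}=\norm{\lambda}+\norm{\mu-\lambda}$. By linearity of $\dual{\Phi}$ and the triangle inequality, together with the fact that $\dual{\Phi}$ is non-expansive,
$$
\norm{\dual{\Phi}\mu}\leq\norm{\dual{\Phi}\lambda}+\norm{\dual{\Phi}(\mu-\lambda)}\leq\norm{\lambda}+\norm{\mu-\lambda}=\norm{\mu}=\norm{\dual{\Phi}\mu},
$$
where the last equality uses that $\mu\in\opr{\bwt{M}}$. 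Equality throughout forces $\norm{\dual{\Phi}\lambda}=\norm{\lambda}$ (and similarly $\norm{\dual{\Phi}(\mu-\lambda)}=\norm{\mu-\lambda}$), whence $\lambda\in\opr{\bwt{M}}$.

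Part (d) is then immediate: for Borel $E\subset\bwt{M}$, the restriction $\mu\restrict_E$ satisfies $0\leq\mu\restrict_E\leq\mu$, so (c) applies. I do not anticipate any real obstacle here; the only subtlety worth flagging is ensuring that both $\lambda$ and $\mu-\lambda$ are positive Radon measures (so their norms are literally their total masses and add to $\norm{\mu}$), which is granted by the hypothesis $0\leq\lambda\leq\mu$.
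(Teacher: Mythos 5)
Your proof is correct and follows essentially the same approach as the paper: part (a) by citing \cite[Proposition 3]{Aliaga}, part (b) trivially, part (c) via the same triangle-inequality chain exploiting additivity of the total variation norm on positive measures, and part (d) as a special case of (c).
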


\begin{proof}
(a) is \cite[Proposition 3]{Aliaga} and (b) is trivial. For part (c) notice that
$$
\norm{\dual{\Phi}\mu} \leq \norm{\dual{\Phi}\lambda} + \norm{\dual{\Phi}(\mu-\lambda)} \leq \norm{\lambda} + \norm{\mu-\lambda} = \norm{\mu} = \norm{\dual{\Phi}\mu}
$$
and therefore $\norm{\lambda}=\norm{\dual{\Phi}\lambda}$ as well. (d) is a particular case of (c).
\end{proof}

In the sequel, we will need two facts about the supports of de Leeuw representations. First, it is intuitively clear that the support of a representation needs to somehow cover the support of the represented functional. We state this principle as follows; for the proof, we refer to \cite{Aliaga}.

\begin{proposition}[cf. {\cite[Lemma 8]{Aliaga}}]
\label{pr:aliaga_lemma_8}
If $m\in\lipfree{M}$, then $\supp(m)\subset\pp_s(\supp(\mu))$ for any de Leeuw representation $\mu$ of $m$.
\end{proposition}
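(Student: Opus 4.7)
The plan is to argue the contrapositive: fix $x\in M\setminus\pp_s(\supp(\mu))$ and show that $x\notin\supp(m)$. I will use the characterisation of $\supp(m)$ recalled from \cite[Proposition 2.7]{APPP}: $x\notin\supp(m)$ iff there is an open neighbourhood $U$ of $x$ in $M$ such that $\duality{m,f}=0$ for every $f\in\Lip_0(M)$ with $\supp(f)\subset U$.

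First I would observe that $\supp(\mu)$ is a closed subset of the compact space $\bwt{M}$, hence compact, so $\pp_s(\supp(\mu))=\pp_1(\supp(\mu))\cup\pp_2(\supp(\mu))$ is a compact, and therefore closed, subset of $\beta M$. Since $\beta M$ is compact Hausdorff and thus normal, and $x\notin\pp_s(\supp(\mu))$, I can separate: there exists an open set $V\subset\beta M$ with $x\in V$ and $\overline{V}\cap\pp_s(\supp(\mu))=\emptyset$. Setting $U=V\cap M$ gives an open neighbourhood of $x$ in $M$, and this is the $U$ I intend to use.

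Next I would fix any $f\in\Lip_0(M)$ with $\supp(f)\subset U$ and show that the continuous extension of $\Phi f$ to $\bwt{M}$ vanishes on the whole of $\supp(\mu)$. Given $\xi\in\supp(\mu)$, pick a net $(y_\alpha,z_\alpha)\in\wt{M}$ with $(y_\alpha,z_\alpha)\to\xi$ in $\bwt{M}$; then by definition of $\pp_1,\pp_2$ we have $y_\alpha\to\pp_1(\xi)$ and $z_\alpha\to\pp_2(\xi)$ in $\beta M$, and both limits lie in $\beta M\setminus\overline{V}$. Since $\beta M\setminus\overline{V}$ is open, the $y_\alpha$ and $z_\alpha$ are eventually outside $\overline{V}$, hence outside $U\supset\supp(f)$, so $f(y_\alpha)=f(z_\alpha)=0$ eventually, giving $(\Phi f)(y_\alpha,z_\alpha)=0$ eventually and therefore $(\Phi f)(\xi)=0$ by continuity. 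Because $\mu$ is Radon it is concentrated on $\supp(\mu)$, so
\[
\duality{m,f}=\int_{\bwt{M}}(\Phi f)\,d\mu=0,
\]
as required.

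The only delicate step is the continuity argument at the compactification level: one must remember that $\Phi f$ is a bounded continuous function on $\wt{M}$, so its extension to $\bwt{M}$ is determined by net limits, and that the projections $\pp_i$ are continuous extensions whose values at $\xi$ are read off precisely from such nets. The rest is just normality of $\beta M$ together with the fact that Radon measures are concentrated on their supports.
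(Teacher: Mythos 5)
Your proof is correct, and it is essentially the natural argument one expects here (the paper itself does not reproduce a proof, deferring instead to Lemma~8 of~\cite{Aliaga}). The chain of reductions is sound: compactness of $\supp(\mu)$ and continuity of $\pp_1,\pp_2$ make $\pp_s(\supp(\mu))$ closed in $\beta M$; regularity of $\beta M$ yields the open $V$ with $\cl{V}$ disjoint from $\pp_s(\supp(\mu))$; the net argument correctly uses that $\pp$ is the continuous extension of the identity, so convergence in $\bwt{M}$ forces coordinatewise convergence in $\beta M$, which places $y_\alpha,z_\alpha$ eventually outside $\cl{V}\supset U\supset\supp(f)$; and Radonness of $\mu$ gives concentration on $\supp(\mu)$, where $\Phi f$ vanishes. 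Note also that, correctly, your argument nowhere assumes $\mu$ is positive or norm-optimal, so it covers an arbitrary de Leeuw representation as the statement requires.
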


For the second fact, let us fix some additional notation. Given $\phi\in\dual{\Lip_0(M)}$, denote by
$$
\norming{\phi} = \set{f\in S_{\Lip_0(M)} \,:\, \duality{f,\phi}=\norm{\phi}}
$$
the set of norming functions for $\phi$. This set may be empty in general, but it never is when $\phi\in\lipfree{M}$. The following statement details how norming functions relate to optimal de Leeuw representations.

\begin{lemma}
\label{lm:norming_phi1}
Let $\mu\in \meas{\bwt{M}}$ be a positive de Leeuw representation of $\phi\in\dual{\Lip_0(M)}$. Then
\[
\set{f\in S_{\Lip_0(M)} \,:\, \Phi f(\zeta)=1 \textup{ for all }\zeta\in \supp(\mu)} \subset \norming{\phi},
\]
and the two sets are equal if $\mu$ is optimal. If the set on the left-hand side is non-empty then $\mu$ is optimal.
\end{lemma}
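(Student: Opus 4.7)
The plan is to exploit positivity of $\mu$ together with the identity $\norm{\Phi f}_\infty = \lipnorm{f}$ to collapse the relevant chains of inequalities. The key fact from the preliminaries is that a positive Radon measure on the compact space $\bwt{M}$ is concentrated on its support, giving $\mu(\supp(\mu)) = \norm{\mu}$, and by general duality $\norm{\phi} = \norm{\dual{\Phi}\mu} \leq \norm{\mu}$ since $\dual{\Phi}$ is non-expansive.

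First I would handle the inclusion and the last assertion in a single stroke. Take $f \in S_{\Lip_0(M)}$ with $\Phi f(\zeta) = 1$ for every $\zeta \in \supp(\mu)$. Since $\mu \geq 0$ and is concentrated on $\supp(\mu)$,
\[
\duality{f,\phi} = \int_{\bwt{M}} \Phi f\,d\mu = \mu(\supp(\mu)) = \norm{\mu}.
\]
Combining this with $\duality{f,\phi} \leq \lipnorm{f}\norm{\phi} = \norm{\phi} \leq \norm{\mu}$ forces equality throughout, so $f \in \norming{\phi}$ and simultaneously $\norm{\phi} = \norm{\mu}$. This simultaneously yields the claimed inclusion and shows that non-emptiness of the left-hand set forces $\mu$ to be optimal.

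For the reverse inclusion under the assumption that $\mu$ is optimal, take $f \in \norming{\phi}$. Since $\lipnorm{f} = 1$ and $\Phi$ is an isometry into $C(\bwt{M})$, the function $1 - \Phi f$ is continuous and nonnegative on $\bwt{M}$, and
\[
\int_{\bwt{M}} (1 - \Phi f)\,d\mu = \norm{\mu} - \duality{f,\phi} = \norm{\phi} - \norm{\phi} = 0.
\]
Nonnegativity and continuity of the integrand, together with $\mu \geq 0$, force the open set $U = \set{\zeta \in \bwt{M} : \Phi f(\zeta) < 1}$ to satisfy $\mu(U) = 0$. If some $\zeta \in \supp(\mu)$ belonged to $U$, that would give an open neighbourhood of $\zeta$ with zero $\mu$-measure, contradicting the definition of support. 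Hence $\Phi f = 1$ on $\supp(\mu)$, as required.

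I do not anticipate any serious obstacle: the whole argument is a double squeeze between $\norm{\phi}$ and $\norm{\mu}$. The only point requiring minor care is the last step, where one must use that $\supp(\mu)$ is defined via open neighbourhoods so that a null set can fail to avoid $\supp(\mu)$ in general, but a null \emph{open} set cannot; continuity of $\Phi f$ is precisely what makes this available.
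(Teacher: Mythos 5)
Your proof is correct and follows essentially the same route as the paper's. The forward inclusion and last assertion are handled by the same squeeze between $\|\phi\|$ and $\|\mu\|$; for the reverse inclusion, the paper simply says $\Phi f=1$ $\mu$-almost everywhere and hence on $\supp(\mu)$ ``by continuity'', whereas you spell out that latter step via the null open set $\{\Phi f<1\}$, which is the same reasoning made explicit.
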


\begin{proof}
If $f\in S_{\Lip_0(M)}$ is such that $\Phi f=1$ on $\supp(\mu)$, then
$$
\|\phi\|\geq\duality{f,\phi} = \int_{\bwt{M}}(\Phi f)\,d\mu = \int_{\supp(\mu)}(\Phi f)\,d\mu = \norm{\mu} \geq \norm{\phi}
$$
and so $\|\mu\|=\|\phi\|$ and $f\in\norming{\phi}$. Conversely, suppose that $\mu$ is optimal and $f\in\norming{\phi}$ . Then $\norm{\mu} = \norm{\phi} = \duality{f,\phi} = \int_{\bwt{M}} (\Phi f)\,d\mu$ and, since $\mu$ is positive and $\abs{\Phi f}\leq 1$ pointwise, we must have $\Phi f=1$ $\mu$-almost everywhere on $\bwt{M}$, and thus on all of $\supp(\mu)$ by continuity.
\end{proof}

We are often interested specifically in de Leeuw representations $\mu\in\meas{\bwt{M}}$ of elements of $\lipfree{M}$ instead of its bidual $\dual{\Lip_0(M)}$. Sadly, there is in general no easy way to tell them apart based on simple features of $\mu$, but we can identify several sufficient and necessary conditions. We will now single out two such necessary conditions that will be relevant in Section \ref{sec:positive}. The first fact is that $\mu$ cannot contain any mass concentrated at a single point in the ``diagonal'' of $\wt{M}$.

\begin{lemma}
\label{lm:measure_over_a_point}
If $\mu\in\meas{\bwt{M}}$ is a norm-optimal (not necessarily positive) representation of an element of $\lipfree{M}$ then $\mu(\pp^{-1}(p,p))=0$ for every $p\in M$.
\end{lemma}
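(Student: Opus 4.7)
The plan is to prove the stronger statement $\mu\restrict_E = 0$ where $E = \pp^{-1}(p,p)$, which is a compact subset of $\bwt{M}$. Setting $\nu := \mu\restrict_E$, the signed measures $\nu$ and $\mu-\nu$ have disjoint Borel supports, so $\norm{\mu} = \norm{\nu} + \norm{\mu-\nu}$; combined with the triangle inequality for $\dual{\Phi}$ and norm-optimality of $\mu$, this forces $\norm{\dual{\Phi}\nu} = \norm{\nu}$. It therefore suffices to prove $\dual{\Phi}\nu = 0$, i.e. $\int_E \Phi f\,d\mu = 0$ for every $f\in\Lip_0(M)$.

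Fix $f\in\Lip_0(M)$. I would construct a localization $f_n\in\Lip_0(M)$ that coincides with the constant $f(p)$ on $B(p,1/n)$ and with $f$ off $B(p,2/n)$: define $\phi_n(t) = \max(0,\min(1,nt-1))$ and set
\[
f_n(x) = f(p) + \phi_n(d(x,p))(f(x) - f(p)).
\]
A product estimate in cases, exploiting $|f-f(p)|\leq 2\lipnorm{f}/n$ on the transition region where $\phi_n$ is non-constant, gives a uniform bound $\lipnorm{f_n}\leq C\lipnorm{f}$; moreover $f_n\to f$ pointwise on $M$ and $f_n(0) = 0$ for all $n$ sufficiently large (and for all $n$ when $p = 0$).

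The key computation splits $\bwt{M}$ into $E$ and its complement. On $E$: since $f_n$ is constant on $B(p,1/n)$ and every $\xi\in E$ is a limit of pairs $(x_\alpha,y_\alpha)\in\wt{M}$ with $x_\alpha,y_\alpha\to p$ in $M$, eventually $f_n(x_\alpha) = f_n(y_\alpha) = f(p)$, so $\Phi f_n(\xi) = 0$ and $\int_E\Phi f_n\,d\mu = 0$ for all $n$. On $\bwt{M}\setminus E$: at least one projection satisfies $\pp_i(\xi)\neq p$ in $\beta M$, and since the continuous extension of $d(p,\cdot)\colon\beta M\to[0,\infty]$ has $\{p\}$ as its only zero (a net in $M$ with $d(p,x_\alpha)\to 0$ converges to $p$ metrically, hence in $\beta M$), we have $d(p,\pp_i(\xi))>0$. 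A case analysis then yields $\Phi f_n(\xi) = \Phi f(\xi)$ for all $n$ large enough (depending on $\xi$): when both projections are away from $p$, the functions $f_n$ and $f$ agree along any approaching net eventually; when (WLOG) $\pp_2(\xi) = p$, the discrepancy $\Phi f_n(x_\alpha,y_\alpha)-\Phi f(x_\alpha,y_\alpha) = (f(y_\alpha)-f(p))/d(x_\alpha,y_\alpha)$ vanishes in the limit since its numerator is bounded by $\lipnorm{f}\,d(y_\alpha,p)\to 0$ while its denominator stays above $d(p,\pp_1(\xi))/2$.

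Combining the uniform bound $|\Phi f_n|\leq C\lipnorm{f}$, dominated convergence on $\bwt{M}\setminus E$, and the weak$^\ast$-continuity of $m\in\lipfree{M}$ on bounded subsets of $\Lip_0(M)$ gives
\[
\duality{m,f} = \lim_n \int_{\bwt{M}}\Phi f_n\,d\mu = \int_{\bwt{M}\setminus E}\Phi f\,d\mu,
\]
which forces $\int_E\Phi f\,d\mu = 0$. Hence $\dual{\Phi}\nu = 0$, and therefore $\nu = 0$ as required. The main obstacle will be verifying the pointwise convergence $\Phi f_n\to\Phi f$ on $\bwt{M}\setminus E$; this is where the geometry of $\beta M$ enters, via the fact that points of $M$ have no distinct ``infinitesimal neighbours'' in $\beta M$.
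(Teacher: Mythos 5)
Your argument is correct, but it follows a genuinely different route from the paper's. The paper observes that $-\dual{\Phi}(\mu\restrict_{\pp^{-1}(p,p)})$ is a \emph{derivation at $p$} (it kills every $f$ constant near $p$), then invokes \cite[Proposition~2.11]{AP_measures}, which says that derivations and weak$^\ast$-continuous functionals sit in $\ell_1$-sum; this gives $\norm{\dual{\Phi}(\mu\restrict_{E^c})}=\norm{\dual{\Phi}\mu}+\norm{\dual{\Phi}(\mu\restrict_E)}$, and norm-optimality then kills $\mu\restrict_E$. You instead prove the stronger intermediate fact that $\dual{\Phi}(\mu\restrict_E)=0$ outright, by a hands-on localisation: freeze $f$ near $p$, note $\Phi f_n$ vanishes identically on $E$ and converges pointwise to $\Phi f$ off $E$ (this is where the key fact that $\set{p}$ is the only zero of the extension of $d(p,\cdot)$ to $\beta M$ enters), then combine a uniform Lipschitz bound, dominated convergence on $\bwt{M}\setminus E$, and the weak$^\ast$-continuity of $m\in\lipfree{M}$. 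Norm-optimality is used only at the end, through the elementary triangle-inequality chain that forces $\norm{\dual{\Phi}\nu}=\norm{\nu}$. The paper's route is shorter but leans on a nontrivial cited result; yours is longer but self-contained and, as a bonus, shows that $\dual{\Phi}(\mu\restrict_{\pp^{-1}(p,p)})=0$ for \emph{any} de Leeuw representation of an element of $\lipfree{M}$, not just a norm-optimal one. The verification steps you sketch (the product-rule Lipschitz estimate for $f_n$, the zero set of $d(p,\cdot)$ on $\beta M$, and the case analysis for pointwise convergence off $E$) all go through as you describe.
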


\begin{proof}
Let $\mu\in\meas{\bwt{M}}$ be such that $\dual{\Phi}\mu\in\lipfree{M}$ and $\|\mu\|=\|\dual{\Phi}\mu\|$, and fix $p\in M$.
Recall that $\phi\in\Lip_0(M)^*$ is a \emph{derivation at $p$} if $\duality{f,\phi}=0$ for any $f\in\Lip_0(M)$ that is constant in a neighbourhood of $p$ (see \cite[Section 2.5]{AP_measures} and \cite[Section 7.5]{Weaver2}).
Because $\Phi f(\zeta)=0$ whenever $\pp(\zeta)=(p,p)$ and $f\in\Lip_0(M)$ is constant in some neighbourhood of $p$, we have that $-\Phi^*\left(\mu\restrict_{\pp^{-1}(p,p)}\right)$ is a derivation at $p$. Now we apply \cite[Proposition 2.11]{AP_measures}, which asserts that derivations and weak$^*$ continuous functionals on $\Lip_0(M)$ are in $\ell_1$-sum with each other, and obtain
$$
\norm{\Phi^*\left(\mu\restrict_{\bwt{M}\setminus\pp^{-1}(p,p)}\right)} = \norm{\Phi^*\mu-\Phi^*\left(\mu\restrict_{\pp^{-1}(p,p)}\right)} = \norm{\Phi^*\mu}+\norm{\Phi^*\left(\mu\restrict_{\pp^{-1}(p,p)}\right)} .
$$
The norm-optimality of $\mu$ finally implies that
$$
\norm{\mu} = \norm{\dual{\Phi}\mu} \leq \norm{\dual{\Phi}\left(\mu\restrict_{\bwt{M}\setminus\pp^{-1}(p,p)}\right)} \leq \norm{\mu\restrict_{\bwt{M}\setminus\pp^{-1}(p,p)}}
$$
and therefore $\mu(\pp^{-1}(p,p))=0$.
\end{proof}

The second fact is that $\mu$ must be concentrated ``away from infinity''. To make this statement precise, let us denote
\begin{equation}
\label{eq:Mr_definition}
\rcomp{M} = \set{\xi\in\beta M \,:\, d(\xi,0) < \infty}
\end{equation}
for the set of elements of $\beta M$ that do not lie at infinity; here, $d(\xi,0)$ stands for the evaluation at $\xi$ of the continuous extension of $d(\cdot,0)$ to $\beta M$. Equivalently, $\rcomp{M}$ consists of the elements of $\beta M$ that are limits of bounded nets in $M$.

Let us remark here that in \cite{AP_measures} and \cite{APS_future} $\rcomp{M}$ is defined with respect to the so-called uniform compactification $\ucomp{M}$ of $M$, rather than $\beta M$. However, in this paper none of the arguments involving $\rcomp{M}$ rely on this distinction, hence so as to avoid overcomplicating matters, we have chosen not to use $\ucomp{M}$.

So, for $\rcomp{M}$ as in \eqref{eq:Mr_definition} we have:

\begin{proposition}[cf. {\cite[Proposition 7]{Aliaga} and \cite[Section 2.3]{APS_future}}]
\label{pr:finite_concentration}
If $\mu\in\opr{\bwt{M}}$ is such that $\dual{\Phi}\mu\in\lipfree{M}$, then $\mu$ is concentrated on $\pp^{-1}(\rcomp{M}\times\rcomp{M})$.
\end{proposition}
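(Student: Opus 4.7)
My plan is to argue by contradiction: suppose $\mu(F)>0$, where $F:=\bwt{M}\setminus\pp^{-1}(\rcomp{M}\times\rcomp{M})$. Since $\xi\mapsto d(\xi,0)$ extends to a continuous $[0,\infty]$-valued function on $\beta M$, partition $F$ into three Borel strata $F_1,F_2,F_{12}$ according to whether only $\pp_1(\zeta)$, only $\pp_2(\zeta)$, or both lie outside $\rcomp{M}$. By Proposition~\ref{pr:opr_facts}(d), restrictions of $\mu$ to Borel sets remain in $\opr{\bwt{M}}$; I keep this in reserve for the harder stratum.

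The main test family is $f_R(x):=\max\{0,\,d(x,0)-R\}$ for $R>0$: each is $1$-Lipschitz with $f_R(0)=0$, and since each $x\in M$ satisfies $f_R(x)=0$ once $R>d(x,0)$, the sequence $(f_R)$ tends to $0$ pointwise on $M$. On bounded sets of $\Lip_0(M)=\dual{\lipfree{M}}$ the weak-$*$ topology coincides with pointwise convergence, so $f_R\to 0$ weak-$*$, and $m\in\lipfree{M}$ forces $\duality{f_R,m}=\int_{\bwt{M}}\Phi f_R\,d\mu\to 0$.

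Next, I evaluate $\Phi f_R$ at $\zeta$ via any net $(x_\alpha,y_\alpha)\to\zeta$ in $\wt{M}$. On $\pp^{-1}(\rcomp{M}\times\rcomp{M})$, once $R$ exceeds $\max\{d(\pp_1(\zeta),0),d(\pp_2(\zeta),0)\}$ the integrand vanishes, so dominated convergence gives the contribution $0$ in the limit. On $F_1$, $d(x_\alpha,0)\to\infty$ while $d(y_\alpha,0)$ stays bounded, whence $d(x_\alpha,y_\alpha)\to\infty$ and the explicit form of $f_R$ gives $\Phi f_R(\zeta)\to 1$; symmetrically $\Phi f_R\to -1$ on $F_2$. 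On $F_{12}$, for large $R$ one has $f_R(x_\alpha)-f_R(y_\alpha)=d(x_\alpha,0)-d(y_\alpha,0)$, so $\Phi f_R(\zeta)$ stabilises at an $R$-independent value $\gamma(\zeta)\in[-1,1]$. Passing to the limit in $\duality{f_R,m}\to 0$ produces the single relation
\[
\mu(F_1)-\mu(F_2)+\int_{F_{12}}\gamma\,d\mu=0.
\]

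The main obstacle is the $F_{12}$ contribution, which a single test family cannot rule out. My plan is to combine two ideas. First, enrich the test family with the shifted versions $f_R^{(p)}(x):=\max\{0,d(x,p)-R\}-\max\{0,d(0,p)-R\}$ for $p\in M$, each still pointwise convergent to $0$; each yields an analogous identity $\mu(F_1)-\mu(F_2)+\int_{F_{12}}\gamma_p\,d\mu=0$, and as $p$ ranges over $M$ the variation of $\gamma_p$ on $F_{12}$ should be rich enough to force $\mu|_{F_{12}}=0$ and then $\mu(F_1)=\mu(F_2)=0$. Second, I reduce to each stratum by decomposing $\mu=\mu|_F+\mu|_{F^c}$ and arguing that $\dual{\Phi}(\mu|_F)\in\lipfree{M}$ via an $\ell_1$-sum decomposition of $\dual{\Lip_0(M)}$ modelled on the derivation trick used in Lemma~\ref{lm:measure_over_a_point} through \cite[Proposition 2.11]{AP_measures}, adapted here to ``derivations at infinity''. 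Making this last reduction rigorous is where I expect the technical work to concentrate.
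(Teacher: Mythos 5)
Your partition of $F=\bwt{M}\setminus\pp^{-1}(\rcomp{M}\times\rcomp{M})$ into $F_1,F_2,F_{12}$, the limiting values of $\Phi f_R$ on each stratum, and the resulting identity
\[
\mu(F_1)-\mu(F_2)+\int_{F_{12}}\gamma\,d\mu=0
\]
are all correct and cleanly argued. The difficulty is that everything you derive this way uses only two facts: that $\mu\geq 0$, and that $\dual{\Phi}\mu$ is weak-$*$ continuous (so that $\langle f_R^{(p)},\dual{\Phi}\mu\rangle\to 0$). Norm-optimality never enters, and the statement is false under just those two hypotheses, so no amount of enlarging the test family of pointwise-null functions can close the gap. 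Concretely, take $M=[0,\infty)$, fix a free ultrafilter $\mathcal{U}$ on $\NN$, and let $\zeta=\lim_{\mathcal{U}}(n+1,n)$ and $\zeta'=\lim_{\mathcal{U}}(n,n+1)$ in $\bwt{M}$; both lie in your stratum $F_{12}$, and they are distinct because $\operatorname{sign}(x-y)$ separates them. Set $\mu=\delta_\zeta+\delta_{\zeta'}$. Then $\mu$ is positive and concentrated on $F$, and for \emph{every} $g\in\Lip_0(M)$ one has $\Phi g(\zeta')=-\Phi g(\zeta)$, hence $\dual{\Phi}\mu=0\in\lipfree{M}$. In particular $\int\Phi g\,d\mu=0$ identically, so every relation produced by $f_R$, $f_R^{(p)}$, or indeed any test function vanishing pointwise is automatically satisfied and carries no information about $\mu(F)$. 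The only hypothesis this $\mu$ violates is $\norm{\mu}=\norm{\dual{\Phi}\mu}$ (here $2\neq 0$), which shows that optimality must be used in a qualitatively different way from positivity plus the constraint-hunting you describe. Your own ``shifted'' family $f_R^{(p)}$ is vulnerable to exactly this cancellation: along nets where both coordinates recede to infinity, $\gamma_p$ is independent of $p$ and the constraints add nothing.

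Your second idea --- decomposing $\dual{\Lip_0(M)}$ into weak-$*$ continuous functionals and ``derivations at infinity'' in $\ell_1$-sum, in the spirit of Lemma~\ref{lm:measure_over_a_point} and \cite[Proposition 2.11]{AP_measures} --- is the correct mechanism for exploiting norm-optimality, and it is in fact the route the paper takes: the paper does not give an independent proof, but points out that the argument of \cite[Lemma 6 and Proposition 7]{Aliaga} goes through verbatim with $M\times M$ replaced by $\rcomp{M}\times\rcomp{M}$, and that argument is precisely such an orthogonality statement (a measure concentrated on $F$ represents a functional that annihilates boundedly supported Lipschitz functions and is $\ell_1$-orthogonal to $\lipfree{M}$, so norm-optimality forces its restriction there to vanish). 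So the part you defer to ``future technical work'' is not a wrinkle to be smoothed at the end; it is the entire proof, and the first half of the proposal cannot substitute for it.
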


\noindent This is an extension of \cite[Proposition 7]{Aliaga}, which only covers the case when $\rcomp{M}=M$. This happens precisely when $M$ is \emph{proper}, i.e. its closed balls are compact. In fact, Proposition \ref{pr:finite_concentration} is stated without proof at the end of p. 8 in \cite{Aliaga}. The argument used in the proof of Lemma 6 and Proposition 7 in \cite{Aliaga} already yields Proposition \ref{pr:finite_concentration} with almost no change other than replacing $M\times M$ with $\rcomp{M}\times\rcomp{M}$, so we will omit the proof here.

\subsection{Integrals of molecules}

Let us now consider a simple sufficient condition for $\mu\in\meas{\bwt{M}}$ to represent a functional in $\lipfree{M}$: when $\mu$ is concentrated on $\wt{M}$,
then $\dual{\Phi}\mu$ is always a weak$^\ast$ continuous functional. This is proved e.g. in \cite[Lemma 4.36]{Weaver2} or \cite[Proposition 4]{Aliaga}. We now provide a different argument that shows that, in fact, de Leeuw representations concentrated on $\wt{M}$ correspond to expressions of $\dual{\Phi}\mu$ as an integral of molecules in a literal sense.

\bigskip % only for preprint formatting, to be removed in final version

\begin{proposition}
\label{pr:wt_bochner}
If $\mu\in\meas{\bwt{M}}$ then
$$
\dual{\Phi}(\mu\restrict_{\wt{M}}) = \int_{\wt{M}}m_{xy}\,d\mu(x,y)
$$
as a Bochner integral in $\lipfree{M}$.
\end{proposition}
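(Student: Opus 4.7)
The plan is to establish Bochner integrability of the vector-valued integrand $(x,y) \mapsto m_{xy}$ with respect to $\mu\restrict_{\wt{M}}$, and then to identify the resulting integral with $\dual{\Phi}(\mu\restrict_{\wt{M}})$ via a standard duality computation against $\Lip_0(M)$.

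For the first step, I would observe that the map $F : \wt{M} \to \lipfree{M}$, $F(x,y) = m_{xy}$, is continuous (since $\delta$ is an isometry and $(x,y) \mapsto 1/d(x,y)$ is continuous on $\wt{M}$) and uniformly bounded with $\norm{F(x,y)} = 1$; consequently $\int_{\wt{M}} \norm{F(x,y)}\, d\abs{\mu}(x,y) = \abs{\mu}(\wt{M}) < \infty$, which is the Bochner integrability bound. Strong measurability is then obtained via Pettis's theorem once $F$ is shown to be essentially separably valued: by tightness of the Radon measure $\mu$ on $\bwt{M}$, one finds a $\sigma$-compact carrier of $\mu\restrict_{\wt{M}}$ whose intersection with $\wt{M}$ has separable continuous image in $\lipfree{M}$, and $F$ is Borel measurable by continuity.

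The identification is then a direct Fubini-style calculation. For any $f \in \Lip_0(M)$, the defining property of the Bochner integral together with the definition of $\dual{\Phi}$ gives
\begin{align*}
\duality{\int_{\wt{M}} m_{xy}\, d\mu(x,y),\, f} &= \int_{\wt{M}} \duality{m_{xy},\, f}\, d\mu(x,y) = \int_{\wt{M}} (\Phi f)(x,y)\, d\mu(x,y) \\
&= \int_{\bwt{M}} \Phi f\, d(\mu\restrict_{\wt{M}}) = \duality{\dual{\Phi}(\mu\restrict_{\wt{M}}),\, f},
\end{align*}
and since $\Lip_0(M)$ separates points of $\dual{\Lip_0(M)}$, the two functionals coincide. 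The main technical hurdle is the strong-measurability step: because $\bwt{M}$ is typically non-metrizable and $\wt{M}$ need not be closed in $\bwt{M}$, one cannot simply invoke separability of the support of the measure, so the argument must route through tightness of $\mu$ to exhibit a $\sigma$-compact set inside $\wt{M}$ on which $F$ takes values in a separable subspace of $\lipfree{M}$.
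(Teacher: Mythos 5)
Your proof is correct and follows essentially the same route as the paper: reduce to Pettis's criterion for strong (Bochner) measurability of $(x,y)\mapsto m_{xy}$, verify the norm is trivially integrable since $\norm{m_{xy}}=1$ and $\mu$ is finite, and then conclude by the standard duality computation. The only cosmetic difference is in the separable-range step, where you invoke inner regularity to produce a $\sigma$-compact carrier inside $\wt{M}$ on which the (norm-continuous) map has separable image, whereas the paper cites separability of the support of a finite Radon measure on the metrizable space $\wt{M}$; these are interchangeable standard facts.
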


Note that restriction to $\wt{M}$ is a valid operation as it is a Borel (in fact, $G_\delta$) subset of $\bwt{M}$.

\begin{proof}
Assume without loss of generality that $\mu$ is Radon and concentrated on $\wt{M}$, i.e. $\mu=\mu\restrict_{\wt{M}}$.
We first check that the integral in the statement is a valid Bochner integral. Since $\norm{m_{xy}}=1$ and $\norm{\mu}<\infty$, it is enough to verify that the mapping $(x,y)\mapsto m_{xy}$ is measurable, i.e. that it is weakly measurable and almost separably valued (see e.g. \cite[Propositions 5.1 and 5.2]{BL}). The former means that the mapping $(x,y)\mapsto\duality{m_{xy},f}$ is measurable for each $f\in\dual{\lipfree{M}}=\Lip_0(M)$, which is obvious as that mapping is precisely $\Phi f$. For the latter, notice that $\mu$ is concentrated on $\supp(\mu)\cap\wt{M}$ by Radonness. This set agrees with the support of $\mu$ taken with $\wt{M}$ as the ambient space, therefore it is separable since $\wt{M}$ is metrisable. We conclude that the integral is valid and represents an element of $\lipfree{M}$. To verify the equality, we only need to check that
$$
\duality{\int_{\wt{M}}m_{xy}\,d\mu(x,y),f} = \int_{\wt{M}}\duality{m_{xy},f} d\mu(x,y) = \int_{\wt{M}}\Phi f(x,y)\,d\mu(x,y) = \duality{f,\dual{\Phi}\mu}
$$
for any $f\in\Lip_0(M)$.
\end{proof}

The converse of Proposition \ref{pr:wt_bochner} does not hold, in the sense that not every de Leeuw representation $\mu$ of an element of $\lipfree{M}$ is concentrated on $\wt{M}$; see \cite[Example 5]{Aliaga} for a simple counterexample where $\mu$ is concentrated entirely outside of $\wt{M}$. However, it is true that every $m\in\lipfree{M}$ admits at least one such representation. Indeed, recall that, for every $\varepsilon>0$, $m$ can be written as a series of molecules $m=\sum_na_nm_{x_ny_n}$ with $\sum_na_n\leq\norm{m}+\varepsilon$. That is, in fact, a discrete integral of molecules given by the de Leeuw representation
$$
\mu = \sum_{n=1}^\infty a_n\delta_{(x_n,y_n)}
$$
of $m$, which is positive, finite, discrete, and concentrated on $\wt{M}$. When $m\in S_{\lipfree{M}}$ is a convex series of molecules, then it admits one such representation $\mu$ that is moreover norm-optimal, i.e. $\sum_na_n=\norm{m}$; conversely, any such $\mu$ clearly corresponds to a convex series of molecules. In order to define their continuous counterparts, we now simply remove the requirement that $\mu$ be discrete.

\begin{definition}
\label{df:convex_integral}
An element of $\lipfree{M}$ will be called a \emph{convex integral of molecules} if it can be written as $\dual{\Phi}\mu$ for some $\mu$ belonging to the set
$$
\opr{\wt{M}} = \set{\mu\in\opr{\bwt{M}} \,:\, \text{$\mu$ is concentrated on $\wt{M}$}} .
$$
If $\mu$ is discrete, we will also use the term \emph{convex series of molecules}.
\end{definition}

This is a slight abuse of terminology as, rigorously, we should only speak of convex (or probability) integrals or series when $\norm{\mu}=1$, and thus only for elements of $S_{\lipfree{M}}$, but this notation will serve our purposes better. Note also that, by Proposition \ref{pr:opr_facts}(d), the measures in $\opr{\wt{M}}$ are just the restrictions of measures in $\opr{\bwt{M}}$ to $\wt{M}$.
Under the equivalent formulation in terms of finitely additive measures, (convex) integrals of molecules correspond precisely to those (optimal) de Leeuw representations that are countably additive, that is, to measures in $\meas{\wt{M}}$.

In general, not every convex integral of molecules is a convex series; we will provide a counterexample to this in Proposition \ref{pr:integral_not_series}. But both notions necessarily agree in situations when we can guarantee that $\mu\restrict_{\wt{M}}$ is discrete, e.g. when $M$ is countable. More generally:

\begin{proposition}
\label{pr:scattered_series}
If $M$ is scattered, then every convex integral of molecules on $M$ is also a convex series of molecules on $M$.
\end{proposition}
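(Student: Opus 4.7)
The plan is to show that, under the scatteredness hypothesis on $M$, any representing measure $\mu\in\opr{\wt{M}}$ is forced to be a discrete measure, whence the corresponding convex integral of molecules is literally a convex series of molecules.

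First I would fix a convex integral of molecules $m=\dual{\Phi}\mu$ with $\mu\in\opr{\wt{M}}$ and extract a separable subset of $\wt{M}$ on which $\mu$ concentrates. Since $\mu$ is a finite positive Radon measure on $\bwt{M}$ that is concentrated on the Borel set $\wt{M}$, inner regularity applied to $\wt{M}$ yields compact sets $K_n\subseteq\wt{M}$ with $\mu(\wt{M}\setminus K_n)<1/n$. Taking $C=\overline{\bigcup_n K_n}$ (closure in $\wt{M}$) gives a separable subset of $\wt{M}$ with $\mu(\bwt{M}\setminus C)=0$; equivalently, the support of $\mu$, considered as a finite measure on the metric space $\wt{M}$, is separable by the general fact recorded in the preliminaries.

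Next I would bring scatteredness into play. The projection $\pp_s(C)\subseteq M$ is separable (continuous image of a separable set) and scattered (scatteredness is inherited by subspaces). The key ingredient is the classical fact that a separable scattered metric space is countable: the Cantor-Bendixson derivation $X_{\alpha+1}=X_\alpha'$ stabilises before $\omega_1$ in any separable metric space, each level strips off at most countably many isolated points (as a separable metric space has a countable base), and scatteredness forces the sequence to terminate at $\emptyset$. Consequently $\pp_s(C)$ is countable, and since $C\subseteq\pp_1(C)\times\pp_2(C)\subseteq\pp_s(C)\times\pp_s(C)$, $C$ is countable too.

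Finally, a finite positive measure on $\bwt{M}$ that is concentrated on a countable set $C$ must take the form $\mu=\sum_{c\in C}\mu(\{c\})\delta_c$, a convergent sum of point masses with $\sum_{c\in C}\mu(\{c\})=\norm{\mu}=\norm{m}$. Since $C\subseteq\wt{M}$, this is a discrete element of $\opr{\wt{M}}$, which by Definition \ref{df:convex_integral} realises $m$ as a convex series of molecules.

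I do not expect any serious obstacle. The mild technical point is arranging the inner compact approximation inside $\wt{M}$ rather than merely inside $\bwt{M}$, but this is immediate from Radonness combined with the fact that $\mu$ is already concentrated on the Borel set $\wt{M}$. The substantive input is the reduction of ``separable and scattered'' to ``countable'' via the Cantor-Bendixson derivation, after which the conclusion is essentially bookkeeping.
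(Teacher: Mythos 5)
Your proposal is correct and takes essentially the same approach as the paper: you show $\mu$ is concentrated on a countable subset of $\wt{M}$, hence is discrete and realises $m$ as a convex series. The paper's version is terser (it invokes the separability of the support of $\mu$ in $\wt{M}$, observes that $\wt{M}$ is itself scattered, and concludes countability), whereas you spell out the inner-regularity step and the Cantor--Bendixson argument that separable scattered metric spaces are countable; you also detour through $\pp_s(C)\subseteq M$ rather than working with $\wt{M}$ directly, but the substance is the same.
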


\begin{proof}
Suppose that $m\in\lipfree{M}$ has a de Leeuw representation $\mu\in\opr{\wt{M}}$. As in the proof of Proposition \ref{pr:wt_bochner}, $\supp(\mu)\cap\wt{M}$ agrees with the support of $\mu$ taken with $\wt{M}$ as the ambient space. But $\wt{M}$ is scattered as well, so it follows easily that $\supp(\mu)\cap\wt{M}$ is countable. Thus $\mu$ is discrete.
\end{proof}

We end this section by showing a simple criterion that characterises when an integral of molecules is convex: a positive measure on $\wt{M}$ is optimal if and only if it is concentrated on a cyclically monotonic set. This was already observed in \cite{ARZ}, under different notation, for discrete series of molecules, but an examination of the argument therein reveals that discreteness is not required.

\begin{theorem}
\label{th:cyclic_monotone_optimal}
Suppose that $\mu\in\meas{\bwt{M}}$ is positive and concentrated on $\wt{M}$. Then the following are equivalent:
\begin{enumerate}[label={\upshape{(\roman*)}}]
\item $\mu\in\opr{\wt{M}}$,
\item $\supp(\mu)\cap\wt{M}$ is cyclically monotonic,
\item $\mu$ is concentrated on a cyclically monotonic set.
\end{enumerate}
\end{theorem}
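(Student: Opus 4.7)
The plan is to prove the cycle of implications (ii)$\Rightarrow$(iii)$\Rightarrow$(i)$\Rightarrow$(ii). The first implication is immediate: take $E = \supp(\mu)\cap\wt{M}$, which is cyclically monotonic by hypothesis. Since $\mu$ is Radon it is concentrated on $\supp(\mu)$, and it is concentrated on $\wt{M}$ by assumption, so $\mu$ is concentrated on $E$.

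For (iii)$\Rightarrow$(i), suppose $\mu$ is concentrated on a cyclically monotonic set $E \subset \wt{M}$ (we may assume $E \neq \emptyset$, else $\mu = 0$). The crucial step is a Rockafellar-style potential construction from optimal transport, producing a function $f : M \to \RR$ with $\lipnorm{f} \leq 1$ and $f(x) - f(y) = d(x,y)$ for every $(x,y) \in E$: fix a base pair in $E$, define $f$ on $\pp_s(E)$ via an infimum over finite chains in $E$ of an expression of the form $\sum_{i=1}^n\bigl[d(x_i,y_{i-1}) - d(x_i,y_i)\bigr]$ (with cyclic monotonicity ensuring finiteness of the infimum and the triangle inequality giving $1$-Lipschitzness), extend to all of $M$ by McShane's theorem, and subtract $f(0)$. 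Then $f \in B_{\Lip_0(M)}$ and $\Phi f = 1$ on $E$. Since $\Phi f$ extends continuously to $\bwt{M}$ and $\supp(\mu) \subset \cl{E}$ (closure in $\bwt{M}$), we obtain $\Phi f = 1$ on $\supp(\mu)$, and because this set is non-empty we have $\lipnorm{f}=1$. Lemma \ref{lm:norming_phi1} then delivers $\mu \in \opr{\bwt{M}}$, and concentration on $\wt{M}$ upgrades this to $\mu \in \opr{\wt{M}}$.

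For (i)$\Rightarrow$(ii), $\dual{\Phi}\mu \in \lipfree{M}$ by Proposition \ref{pr:wt_bochner}, so weak$^\ast$ compactness of $B_{\Lip_0(M)}$ ensures that $\norming{\dual{\Phi}\mu}$ is non-empty (the trivial case $\mu=0$ aside). Pick $f \in \norming{\dual{\Phi}\mu}$; by Lemma \ref{lm:norming_phi1} (equality of the two sets when $\mu$ is optimal), $\Phi f = 1$ on $\supp(\mu)$, so $f(x) - f(y) = d(x,y)$ for every $(x,y) \in \supp(\mu)\cap\wt{M}$. For any $(x_1,y_1),\ldots,(x_n,y_n)$ in this set, with indices taken mod $n$,
\[
\sum_{i=1}^{n} d(x_i,y_i) = \sum_{i=1}^{n}\bigl(f(x_i)-f(y_i)\bigr) = \sum_{i=1}^{n}\bigl(f(x_i)-f(y_{i+1})\bigr) \leq \sum_{i=1}^{n} d(x_i,y_{i+1}),
\]
where the middle equality reindexes $\sum_i f(y_i)$ cyclically and the last step uses $\lipnorm{f}\leq 1$. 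This is precisely the defining inequality for cyclic monotonicity of $\supp(\mu)\cap\wt{M}$.

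The main technical hurdle is the Rockafellar-type construction in (iii)$\Rightarrow$(i), i.e.\ verifying that the chain infimum is finite and yields a $1$-Lipschitz function agreeing with $d$-differences on $E$. This is a classical combinatorial argument that depends only on the triangle inequality and the cyclic monotonicity hypothesis, so it applies to an arbitrary metric space $M$ (no separability or properness required); in particular, no discreteness of $\mu$ is needed, which is why the same proof extends the discrete case in \cite{ARZ} to convex integrals. The remaining steps are bookkeeping with Lemma \ref{lm:norming_phi1} and the continuous extension of $\Phi f$ from $\wt{M}$ to $\bwt{M}$.
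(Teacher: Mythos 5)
Your proof is correct and takes essentially the same approach as the paper: both hinge on the existence of a $1$-Lipschitz function $f$ with $\Phi f\equiv 1$ on the cyclically monotonic set, combined with Lemma \ref{lm:norming_phi1}. The paper factors that existence statement out as Proposition \ref{pr:cyclic_monotone_function} and cites \cite[Theorem 2.4]{ARZ} for it, whereas you unfold the same equivalence in place --- the easy direction inside (i)$\Rightarrow$(ii) and the Rockafellar-type potential construction inside (iii)$\Rightarrow$(i) --- but the underlying argument is identical.
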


\begin{proof}
The equivalence is based on the following classical fact.

\begin{proposition}
\label{pr:cyclic_monotone_function}
A subset $E\subset\wt{M}$ is cyclically monotonic if and only if there exists $f\in B_{\Lip_0(M)}$ such that $\Phi f=1$ on $E$.
\end{proposition}

\noindent For a direct proof of this Proposition, we refer to the equivalence (i)$\Leftrightarrow$(iv) in \cite[Theorem 2.4]{ARZ}. Although the result from \cite{ARZ} is stated only for countable $E$, the same proof is valid for any set $E$ with no changes whatsoever.

Theorem \ref{th:cyclic_monotone_optimal} follows easily from Proposition \ref{pr:cyclic_monotone_function}. Indeed, suppose first that (i) is true, that is $\mu$ is optimal. Since $\dual{\Phi}\mu\in\lipfree{M}$ by Proposition \ref{pr:wt_bochner}, there exists some $f\in\norming{\dual{\Phi}\mu}$. Then $\Phi f=1$ on $\supp(\mu)$ by Lemma \ref{lm:norming_phi1}, thus $\supp(\mu)\cap\wt{M}$ is cyclically monotonic by Proposition \ref{pr:cyclic_monotone_function} and (ii) holds. The implication (ii)$\Rightarrow$(iii) is clear because $\mu$ is Radon. Finally, suppose that $\mu\in\meas{\bwt{M}}$ is positive and concentrated on a cyclically monotonic set $E\subset\wt{M}$, and let $f$ be given by Proposition \ref{pr:cyclic_monotone_function}. Then
$$
\norm{\mu} \geq \norm{\dual{\Phi}\mu} \geq \duality{\dual{\Phi}\mu,f} = \int_{\bwt{M}}(\Phi f)\,d\mu = \int_E(\Phi f)\,d\mu = \mu(E) = \norm{\mu}
$$
and thus $\norm{\mu}=\norm{\dual{\Phi}\mu}$. So (iii)$\Rightarrow$(i) is also true and the proof is complete.
\end{proof}

The following consequence of Theorem \ref{th:cyclic_monotone_optimal} is worth noting.

\begin{corollary}
\label{cr:metric_triples_pm}
Let $\mu\in\opr{\wt{M}}$. Suppose that $(x_0,x_1), (x_1,x_2), \ldots, (x_{n-1},x_n) \in\supp(\mu)\cap\wt{M}$. Then $\sum_{k=1}^n d(x_{k-1},x_k) = d(x_0,x_n)$. In particular, $x_1,\ldots,x_{n-1} \in [x_0,x_n]$.
\end{corollary}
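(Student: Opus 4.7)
The plan is to exploit the existence of a Lipschitz function that is ``fully active'' on the support of $\mu$. Since $\mu\in\opr{\wt{M}}$, Theorem \ref{th:cyclic_monotone_optimal} gives that $\supp(\mu)\cap\wt{M}$ is cyclically monotonic, so by Proposition \ref{pr:cyclic_monotone_function} there is some $f\in B_{\Lip_0(M)}$ with $\Phi f=1$ on $\supp(\mu)\cap\wt{M}$ (equivalently, one could pick any $f\in\norming{\dual{\Phi}\mu}$, which is non-empty since $\dual{\Phi}\mu\in\lipfree{M}$ by Proposition \ref{pr:wt_bochner}, and apply Lemma \ref{lm:norming_phi1}). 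In either case, the pairs $(x_{i-1},x_i)$ lying in $\supp(\mu)\cap\wt{M}$ yield $f(x_{i-1})-f(x_i)=d(x_{i-1},x_i)$ for each $i=1,\ldots,n$.

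The next step is to telescope. Summing from $i=1$ to $n$ gives
$$
f(x_0)-f(x_n) = \sum_{i=1}^n d(x_{i-1},x_i),
$$
and similarly for any $1\leq k\leq n-1$,
$$
f(x_0)-f(x_k) = \sum_{i=1}^k d(x_{i-1},x_i), \qquad f(x_k)-f(x_n) = \sum_{i=k+1}^n d(x_{i-1},x_i).
$$
Because $f$ is 1-Lipschitz, the left-hand sides are bounded above by $d(x_0,x_k)$, $d(x_k,x_n)$, and $d(x_0,x_n)$ respectively, while the triangle inequality bounds each distance above by the corresponding sum. These two chains of inequalities must collapse to equalities, so in particular
$$
d(x_0,x_k)+d(x_k,x_n) = \sum_{i=1}^n d(x_{i-1},x_i) = d(x_0,x_n),
$$
which is precisely the statement $x_k\in[x_0,x_n]$.

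There is no real obstacle here beyond bookkeeping; the whole content of the corollary is the combination of the ``norming function on the support'' provided by Theorem \ref{th:cyclic_monotone_optimal}/Proposition \ref{pr:cyclic_monotone_function} with the standard squeeze between the Lipschitz upper bound $f(a)-f(b)\leq d(a,b)$ and the triangle inequality lower bound.
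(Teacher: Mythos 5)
Your proof is correct. It arrives at the same conclusion from the same underlying machinery (Theorem \ref{th:cyclic_monotone_optimal}), but you take the ``dual'' route: instead of applying the raw cyclical monotonicity inequality \eqref{eq:cyclical_monotonicity} directly to the reordered chain of pairs $(x_{n-1},x_n),(x_{n-2},x_{n-1}),\ldots,(x_0,x_1)$ as the paper does, you pass through Proposition \ref{pr:cyclic_monotone_function} (or equivalently Lemma \ref{lm:norming_phi1}) to obtain a witnessing function $f\in B_{\Lip_0(M)}$ with $\Phi f=1$ on $\supp(\mu)\cap\wt{M}$, and then telescope the equalities $f(x_{i-1})-f(x_i)=d(x_{i-1},x_i)$, squeezing against the $1$-Lipschitz bound and the triangle inequality. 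The two arguments are of comparable length and both elementary; the function-side version you chose makes the ``collapse to equality'' step perhaps a bit more transparent and avoids the need to spot the right cyclic reindexing, while the paper's version stays entirely combinatorial and makes the use of condition \eqref{eq:cyclical_monotonicity} explicit. Either is a fine proof; yours is a legitimate alternative rather than a gap.
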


\begin{proof}
By cyclical monotonicity we have
\begin{align*}
d(x_0,x_1) + d(x_1,x_2) + \ldots + d(x_{n-1},x_n) &= d(x_{n-1},x_n) + d(x_{n-2},x_{n-1}) + \ldots + d(x_0,x_1) \\
&\leq d(x_{n-1},x_{n-1}) + d(x_{n-2},x_{n-2}) + \ldots + d(x_1,x_1) + d(x_0,x_n) \\
&= d(x_0,x_n)
\end{align*}
and in particular, for each $k=1,\ldots,n-1$,
$$
d(x_0,x_k) + d(x_k,x_n) \leq d(x_0,x_1) + \ldots + d(x_{k-1},x_k) + d(x_k,x_{k+1}) + \ldots + d(x_{n-1},x_n) \leq d(x_0,x_n)
$$
thus all inequalities are equalities.
\end{proof}

This connection between optimal de Leeuw representations and cyclical monotonicity is generalised to certain elements of $\dual{\Lip_0(M)}$ (specifically those that ``avoid infinity'', see \cite{AP_measures} for the definition) in \cite{APS_future}.

\section{Convex integrals and functionals induced by measures}
\label{sec:positive}

It is reasonable to expect there to be a relationship between optimal de Leeuw representations and optimal couplings or transport plans, given that both are measures on $\wt{M}$ or $\bwt{M}$ that minimise some value. The next theorem exhibits an explicit correspondence between them and, at the same time, provides a large class of elements of $\lipfree{M}$ that can be represented as convex integrals of molecules. It turns out that every functional of the form $\widehat{\lambda}$ for $\lambda\in\meas{M}$, given by \eqref{eq:functional_induced_by_measure}, admits such a representation. In fact, there is a correspondence between optimal couplings (between the positive and negative parts of $\lambda$) and those de Leeuw measures in $\opr{\wt{M}}$ that satisfy additional finiteness conditions.

\begin{theorem}
\label{th:major_radon_wt}
Let $m\in\lipfree{M}$. Then the following are equivalent:
\begin{enumerate}[label={\upshape{(\roman*)}}]
\item $m=\widehat{\lambda}$ for some Radon measure $\lambda\in\meas{M}$,
\item $m$ is a convex integral of molecules with a representation $\mu\in\opr{\wt{M}}$ such that
\begin{equation}
    \label{eq:weighted finite}
    \int_{\widetilde{M}}\frac{1}{d(x,y)}\,d\mu(x,y)<\infty
\end{equation}
and
\begin{equation}
    \label{eq:marginals_finite_first_moment}
    \int_{\widetilde{M}}\frac{d(x,0)}{d(x,y)}\,d\mu(x,y)<\infty .
\end{equation}
\end{enumerate}
If the above hold, then $\mu$ can be chosen to satisfy $(\pp_1)_\sharp\mu\ll\lambda^+$ and $(\pp_2)_\sharp\mu\ll\lambda^-$ where $\lambda=\lambda^+-\lambda^-$ is the Jordan decomposition of $\lambda$. In particular, $(\pp_1)_\sharp\mu \perp (\pp_2)_\sharp\mu$.
\end{theorem}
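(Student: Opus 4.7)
My approach is to build an explicit correspondence between measures on $\wt{M}$ and on $M$ via the weighting factor $d(x,y)$. For the easier direction (ii)$\Rightarrow$(i), given $\mu\in\opr{\wt{M}}$ satisfying \eqref{eq:weighted finite} and \eqref{eq:marginals_finite_first_moment}, I would set $d\nu(x,y) = d\mu(x,y)/d(x,y)$, which is a finite Radon measure on $\wt{M}$ by \eqref{eq:weighted finite}, and define $\lambda = (\pp_1)_\sharp\nu - (\pp_2)_\sharp\nu\in\meas{M}$. Finite first moment of $\lambda$ follows from \eqref{eq:marginals_finite_first_moment} together with the bound $d(y,0)\leq d(x,0)+d(x,y)$ applied to the second marginal, and the linearity computation
\[
\duality{\widehat{\lambda},f} = \int(f(x)-f(y))\,d\nu(x,y) = \int\Phi f\,d\mu = \duality{m,f}
\]
for $f\in\Lip_0(M)$ gives $m=\widehat{\lambda}$. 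Splitting the integral is justified since $\abs{f(x)}\leq\lipnorm{f}\cdot d(x,0)$ is integrable against $(\pp_1)_\sharp\nu$, and symmetrically for the other marginal.

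For (i)$\Rightarrow$(ii): since $\widehat{c\,\delta_0}=0$ for any $c\in\RR$, I replace $\lambda$ by $\lambda+c\,\delta_0$ for an appropriate $c$ to assume $\norm{\lambda^+}=\norm{\lambda^-}=:a$ without altering $m$ (the trivial case $a=0$ gives $m=0$ and one takes $\mu=0$). As $\widehat{\lambda}\in\lipfree{M}$ forces $\lambda^\pm$ to have finite first moment, the normalisations $\lambda^+/a$ and $\lambda^-/a$ are probability measures in $\mathcal{W}_1(M)$, and Theorem \ref{th:kantorovich} yields an optimal coupling $\pi$ between them. Mutual singularity $\lambda^+\perp\lambda^-$ furnishes a Borel partition $M=A\sqcup B$ supporting $\lambda^+$ and $\lambda^-$ respectively, so that $\pi$ is concentrated on $A\times B\subset\wt{M}$. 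I then set $d\mu(x,y)=a\cdot d(x,y)\,d\pi(x,y)$ on $\wt{M}$; this is a finite Radon measure whose total mass is $a$ times the (finite) optimal transport cost.

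Verifying the properties of $\mu$ is then routine: the finiteness conditions follow by substitution --- for instance $\int d(x,0)/d(x,y)\,d\mu = \int d(x,0)\,d\lambda^+(x)<\infty$ --- and $\dual{\Phi}\mu=m$ is the dual linearity computation. Optimality $\mu\in\opr{\wt{M}}$ follows from Theorem \ref{th:cyclic_monotone_optimal}: $\pi$ is concentrated on a cyclically monotone set $E\subset\wt{M}$, and since $d(x,y)>0$ on $\wt{M}$ the measures $\mu$ and $\pi$ share the same null sets there, so $\mu$ is concentrated on $E$ as well. For absolute continuity, $\mu\ll\pi$ with density $a\cdot d(x,y)$ yields $(\pp_1)_\sharp\mu\ll(\pp_1)_\sharp\pi=\lambda^+/a\sim\lambda^+$, and symmetrically $(\pp_2)_\sharp\mu\ll\lambda^-$; the mutual singularity of the Jordan parts gives the final clause. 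The main delicate point is the normalisation step: the Jordan decomposition appearing in the statement is that of the \emph{adjusted} $\lambda$, which may carry an atom at $0$ absent from the original. Beyond this bookkeeping, the conceptual content is simply that multiplying an optimal coupling $\pi$ by the strictly positive weight $d(x,y)$ converts it into a de Leeuw measure $\mu$, preserving cyclical monotonicity and thus optimality.
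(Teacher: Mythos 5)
Your proof follows essentially the same route as the paper's: the weight $d(x,y)$ sets up the dictionary between measures on $\wt{M}$ and signed measures on $M$, and Kantorovich duality supplies the optimal coupling in the harder direction. There are two small stylistic variations worth noting. To see that $\pi$ avoids the diagonal, you invoke a Borel partition $M = A\sqcup B$ coming from $\lambda^+\perp\lambda^-$ and deduce $\pi(A\times B)=\|\pi\|$, whereas the paper observes that $(\pp_1)_\sharp(\pi\restrict_\Delta)=(\pp_2)_\sharp(\pi\restrict_\Delta)$ is a common lower bound for the mutually singular marginals and must therefore vanish; both are clean. To see that $\mu\in\opr{\wt{M}}$, you argue via cyclical monotonicity (Theorem~\ref{th:cyclic_monotone_optimal}) using the standard fact that optimal couplings concentrate on cyclically monotone sets and that $\mu$ and $\pi$ are mutually absolutely continuous on $\wt{M}$, whereas the paper just computes $\|\mu\|=\int d(x,y)\,d\pi=d_{\mathcal{W}_1}(\lambda^+,\lambda^-)=\|m\|$ directly. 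Your route is slightly more modular (it reuses Theorem~\ref{th:cyclic_monotone_optimal}), the paper's slightly more self-contained. Your final remark about the bookkeeping issue with the Jordan decomposition of the adjusted $\lambda$ is correct and in fact matches what happens tacitly in the paper, which also replaces $\lambda$ by $\lambda+c\delta_0$ before invoking the Jordan decomposition.
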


Let us note that convex integrals of molecules do not need to admit any representation from $\opr{\wt{M}}$ satisfying conditions \eqref{eq:weighted finite} or \eqref{eq:marginals_finite_first_moment}. Indeed, there exist convex integrals of molecules such that all of their optimal representations concentrated on $\wt{M}$ fail one or both of these conditions (see Theorem \ref{th:majorizable_convex_integrals}, Example \ref{ex:not_majorisable} and Proposition \ref{pr:isometric_l1}).

\begin{proof}
First, assume that (ii) holds and $m=\dual{\Phi}\mu$.
As in the proof of Proposition \ref{pr:wt_bochner}, $\supp(\mu)\cap\wt{M}$ is separable and so we may assume that $M$ is separable by replacing it with the closure in $M$ of $\pp_s(\supp(\mu)\cap\wt{M})$.
Define a weighted measure $\nu$ on $\widetilde{M}$ by $d\nu(x,y)=d(x,y)^{-1}\,d\mu(x,y)$. By \eqref{eq:weighted finite}, $\nu$ is finite and therefore Radon (see e.g. \cite[Lemma 7.1.11]{Bogachev}). Then $\lambda_i=(\pp_i)_\sharp \nu$ for $i=1,2$ are Radon measures on $M$, hence $\lambda=\lambda_1-\lambda_2$ is a signed Radon measure on $M$. By adding a multiple of $\delta_0$, we may also assume that $\lambda(\{0\})=0$. Moreover, condition \eqref{eq:marginals_finite_first_moment} and the finiteness of $\mu$ ensure that both $\lambda_1$ and $\lambda_2$ have finite first moments. Indeed,
$$
\int_M d(x,0)\,d\lambda_1(x) = \int_M d(x,0)\,d((\pp_1)_\sharp\nu)(x) = \int_{\wt{M}} d(x,0)\,d\nu(x,y) = \int_{\wt{M}} \frac{d(x,0)}{d(x,y)}\,d\mu(x,y)<\infty
$$
and
\begin{align*}
\int_M d(y,0)\,d\lambda_2(y) &= \int_M d(y,0)\,d((\pp_2)_\sharp\nu)(y)=\int_{\wt{M}} \frac{d(y,0)}{d(x,y)}\,d\mu(x,y)\\
&\leq \int_{\wt{M}} \frac{d(x,y)+d(x,0)}{d(x,y)}\,d\mu(x,y)=\|\mu\|+ \int_{\wt{M}} \frac{d(x,0)}{d(x,y)}\,d\mu(x,y) <\infty.
\end{align*}
So, by \cite[Proposition 4.4]{AP_measures}, $\lambda$ induces a functional $\widehat{\lambda}$ in $\lipfree{M}$, and because
for every $f\in \Lip_0(M)$ we have
\begin{align*}
\int_M f\, d\lambda&=\int_M f\, d\lambda_1 - \int_M f\, d\lambda_2\\
&=\int_{\widetilde{M}} \frac{f(\pp_1(x,y))}{d(x,y)}\, d\mu(x,y) - \int_{\widetilde{M}} \frac{f(\pp_2(x,y))}{d(x,y)}\, d\mu(x,y)\\
&=\duality{\Phi^*\mu,f}=\duality{m,f} ,
\end{align*}
we conclude that $\widehat{\lambda}=m$.

Conversely, assume that (i) holds, i.e. $m=\widehat{\lambda}$ for some $\lambda\in\meas{M}$. Then $\lambda^+, \lambda^-\in\meas{M}$ have finite first moment by \cite[Proposition 4.4]{AP_measures}. Adding a multiple of $\delta_0$ to $\lambda$ does not change the effect of integrating functions $f\in\Lip_0(M)$ against $\lambda$ because $f(0)=0$, so we may assume that $\lambda(M)=0$, i.e. $\norm{\lambda^+}=\norm{\lambda^-}$. Further, by multiplying by a constant factor we assume without loss of generality that $\norm{\lambda^+}=\norm{\lambda^-}=1$. We can now apply the Kantorovich duality theorem (Theorem \ref{th:kantorovich}) to conclude that there exists an optimal coupling $\pi\in\meas{M\times M}$ from $\lambda^+$ to $\lambda^-$ that attains the optimal transport cost
$$
\int_{M\times M}d(x,y)\,d\pi(x,y) = d_{\mathcal{W}_1}(\lambda^+,\lambda^-) = \norm{\widehat{\lambda^+}-\widehat{\lambda^-}}_{\lipfree{M}} = \norm{\widehat{\lambda}}_{\lipfree{M}} = \norm{m} .
$$
Let us check that $\pi$ is concentrated on $\wt{M}$. Indeed, denoting $\Delta=(M\times M)\setminus\wt{M}$ for the diagonal, we clearly have $(\pp_1)_\sharp(\pi\restrict_\Delta)=(\pp_2)_\sharp(\pi\restrict_\Delta)$, and this measure is a lower bound for $(\pp_1)_\sharp\pi=\lambda^+$ and $(\pp_2)_\sharp\pi=\lambda^-$. But $\lambda^+\perp\lambda^-$, so we conclude that $(\pp_1)_\sharp(\pi\restrict_\Delta)=0$ and thus $\pi\restrict_\Delta=0$. Therefore we can define a positive Radon measure $\mu\in\meas{\wt{M}}$ by
$$
d\mu(x,y) = d(x,y)\,d\pi(x,y) ,
$$
such that $\norm{\mu}=\norm{m}$ and the value of \eqref{eq:weighted finite} is $\norm{\pi}<\infty$, while \eqref{eq:marginals_finite_first_moment} is the finite first moment of $\lambda^+$ , that is
$$
\int_{\wt{M}} \frac{d(x,0)}{d(x,y)}\,d\mu(x,y) = \int_{\wt{M}} d(x,0)\,d\pi(x,y) = \int_M d(x,0)\,d((\pp_1)_\sharp\pi)(x) = \int_M d(x,0)\,d\lambda^+(x)<\infty .
$$
Moreover $\dual{\Phi}\mu\in\lipfree{M}$ by Proposition \ref{pr:wt_bochner}, and for every $f\in\Lip_0(M)$ we have
\begin{align*}
\duality{\dual{\Phi}\mu,f} = \int_{\wt{M}}(\Phi f)\,d\mu &= \int_{\wt{M}}\frac{f(x)-f(y)}{d(x,y)}\,d\mu(x,y) \\
&= \int_{M\times M}(f(x)-f(y))\,d\pi(x,y) \\
&= \int_{M\times M}(f\circ\pp_1)\,d\pi - \int_{M\times M}(f\circ\pp_2)\,d\pi \\
&= \int_M f\,d\lambda^+ - \int_M f\,d\lambda^- = \int_M f\,d\lambda
\end{align*}
thus $\dual{\Phi}\mu=\widehat{\lambda}=m$. Note again that splitting the integral is valid because the two terms $\int_Mf\,d\lambda^\pm$ are integrable by \cite[Proposition 4.4]{AP_measures}. This finishes the proof of the equivalence (i)$\Leftrightarrow$(ii).

For the last statement, observe that $\mu\ll\pi$ and hence $(\pp_i)_\sharp\mu \ll (\pp_i)_\sharp\pi$ for $i=1,2$. Thus, the fact that $(\pp_1)_\sharp\pi=\lambda^+$ and $(\pp_2)_\sharp\pi=\lambda^-$ are mutually singular implies that $(\pp_1)_\sharp\mu \perp (\pp_2)_\sharp\mu$ as well.
\end{proof}

Note that $m$ can also admit optimal representations $\mu\in\opr{\wt{M}}$ such that $(\pp_1)_\sharp\mu$ and $(\pp_2)_\sharp\mu$ are not mutually singular. As an example,
consider the metric space $M=\set{0,1,2}\subset\mathbb{R}$ and $m=\delta(1)-2\delta(2)\in\lipfree{M}$; recall that $\delta(x)$ stands for the evaluation functional at $x\in M$. Then $\norm{m}=\duality{m,-d(\cdot,0)}=3$; moreover, $m=\widehat{\lambda}$ where $\lambda=\delta_{1}-2\delta_{2}\in\meas{M}$, and $\mu=\delta_{(0,1)}+2\delta_{(1,2)}\in\meas{\wt{M}}$ is an optimal representation, i.e. $\Phi^*\mu=m$, $\mu\in\opr{\wt{M}}$ and satisfies the finiteness conditions \eqref{eq:weighted finite} and \eqref{eq:marginals_finite_first_moment}. However,
both $(\pp_1)_\sharp\mu$ and $(\pp_2)_\sharp\mu$ have positive mass at $1$. In Example \ref{ex:lebesgue_measure_is_convex_series} we also demonstrate that not all representations from $\opr{\wt{M}}$ of functionals induced by Radon measures have to meet conditions \eqref{eq:weighted finite} and \eqref{eq:marginals_finite_first_moment}.

Measures on $M$ inducing elements of $\lipfree{M}$ are not necessarily Radon. For instance, positive functionals in $\lipfree{M}$ are always induced by a positive measure on $M$ \cite[Corollary 5.8]{AP_measures}, however the measure could be $\sigma$-finite but not finite \cite[Remark 5.5]{AP_measures}. As a close relative, we may consider \emph{majorisable}
functionals in $\lipfree{M}$, i.e. those that can be written as the difference between two positive functionals. These are essentially the ``measure-induced elements'' of $\lipfree{M}$, except that they may be induced by the difference of two $\sigma$-finite positive measures, which is not formally a measure \cite[Theorem 5.9]{AP_measures}. 

Majorisable functionals turn out to be convex integrals of molecules as well. In Theorem \ref{th:majorizable_convex_integrals} we will extend (or rather apply) Theorem \ref{th:major_radon_wt} to provide a more general sufficient condition for an element of $\lipfree{M}$ to be a convex integral of molecules, and we will also identify majorisable functionals with those convex integrals of molecules that satisfy condition \eqref{eq:marginals_finite_first_moment}. Hence, \cite[Remark 5.5]{AP_measures} shows that there exist convex integrals of molecules that admit representations from $\opr{\wt{M}}$ satisfying condition \eqref{eq:marginals_finite_first_moment} but not \eqref{eq:weighted finite}, and the latter distinguishes Radon measures in the class of majorisable functionals.

First, to make this more general sufficient condition clear, let us define certain weighting operators on Lipschitz spaces. Given $n\in\NN$, denote 
$$A_n=\left\{x\in M: 2^{-(n+1)}\leq d(x,0)\leq 2^{n+1}\right\}$$ and define the function $\Pi_n\in\Lip_0(M)$ supported in $A_n$ by
$$
\Pi_n(x) = \begin{cases}
0 &\text{if } d(x,0)\leq 2^{-(n+1)} , \\
2^{n+1}d(x,0)-1 &\text{if } 2^{-(n+1)}\leq d(x,0)\leq 2^{-n} , \\
1 &\text{if } 2^{-n}\leq d(x,0)\leq 2^n , \\
2-2^{-n}d(x,0) &\text{if } 2^n\leq d(x,0)\leq 2^{n+1} , \\
0 &\text{if } 2^{n+1}\leq d(x,0).
\end{cases}
$$
Next, let $W_{\Pi_n}:\Lip_0(M)\to\Lip_0(M)$ be the weighting operator given by $W_{\Pi_n}(f)=f\cdot {\Pi_n}$ for every $f\in\Lip_0(M)$ (see \cite[Lemma 2.3]{APPP}). The operator $W_{\Pi_n}$ is $w^*$-$w^*$-continuous and, by \cite[Lemma 4]{AP_normality}, for every $m\in \lipfree{M}$ the $W_{\Pi_n}^*(m)$ converge to $m$ in the norm topology of $\lipfree{M}$. This operator can be viewed as a ``Lipschitz-regular restriction'' of $m$ to the annulus $A_n$. We will say that $m\in\lipfree{M}$ is \emph{majorisable on annuli} if $W_{\Pi_n}^*(m)$ is majorisable (hence measure-induced) for every $n\in\NN$. We will make use of the following characterisation of functionals that are majorisable on annuli.

\begin{lemma}
\label{lem:majorisable_on_annuli}
Let $m\in\lipfree{M}$. Then $m$ is majorisable on annuli if and only if there exist positive $\sigma$-finite Borel measures $\lambda^+$, $\lambda^-$ on $M$ such that:
\begin{enumerate}[label={\upshape{(\alph*)}}]
\item $\lambda^+(\set{0})=\lambda^-(\set{0})=0$,
\item $\lambda^+\restrict_{A_n}, \lambda^-\restrict_{A_n}\in\meas{A_n}$ for every $n\in\NN$,
\item $\lambda^+\perp\lambda^-$,
\item $W_{\Pi_n}^*(m)=\widehat{\lambda_n}$ where $\lambda_n\in\meas{M}$ is given by $d\lambda_n=\Pi_nd\lambda^+ - \Pi_nd\lambda^-$ for every $n\in\NN$.
\end{enumerate}
\end{lemma}

\begin{proof}
The ``if'' part of the statement follows from (d) and \cite[Theorem 5.4]{AP_measures}. For the converse, put $m_n=W_{\Pi_n}^*(m)$ for every $n\in\NN$. By \cite[Theorem 5.4]{AP_measures} there exist Radon measures $\lambda_n\in\meas{M}$ inducing $m_n$ and such that $\abs{\lambda_n}(\set{0})=0$. For any $k> n$ we have $\Pi_n=\Pi_n\Pi_k$, thus $m_n=W_{\Pi_n}^*(m_k)$. Since inducing measures are unique modulo $\delta_0$ by \cite[Proposition 4.9]{AP_measures}, we conclude that $d\lambda_n = \Pi_n\,d\lambda_k$ for $k> n$. In particular, $\lambda_n^+\leq\lambda_{n+1}^+$ and $\lambda_n^-\leq\lambda_{n+1}^-$. So, for Borel $E\subset M$ we can define
$$
\lambda^+(E) = \lim_n\lambda_n^+(E) = \sup_n\lambda_n^+(E)
$$
as a value in $[0,\infty]$. It is straightforward to check that $\lambda^+$ is a positive $\sigma$-finite Borel measure on $M$, and that $d\lambda_n^+=\Pi_n\,d\lambda^+$. The measure $\lambda^-$ is constructed analogously. Properties (a) and (d) are then obvious. Property (b) follows from the fact that for any Borel $A\subset A_n$ we have $\lambda^+(A)=\lambda^+_{n+1}(A)$. To prove (c), let $B_n^+$ and $B_n^-$ be disjoint Borel subsets of $M$ on which $\lambda_n^+$ and $\lambda_n^-$ are concentrated, respectively. Since $\lambda_n^+\ll\lambda_k^+$ for $k\geq n$, $\lambda_n^+$ is concentrated on $\bigcap_{k=n}^\infty B_k^+$ and therefore $\lambda^+$ is concentrated on the Borel set $B^+=\bigcup_{n=1}^\infty\bigcap_{k=n}^\infty B_k^+$. Similarly, $\lambda^-$ is concentrated on $B^-=\bigcup_{n=1}^\infty\bigcap_{k=n}^\infty B_k^-$, and it is clear that $B^+$ and $B^-$ are disjoint.
\end{proof}

Now we are ready to describe more convex integrals of molecules.

\begin{theorem}
\label{th:majorizable_convex_integrals}
Let $m\in\lipfree{M}$. If $m$ is majorisable on annuli, then it is a convex integral of molecules. 

Moreover, $m$ is majorisable (globally on $M$) if and only if it is a convex integral of molecules with a representation $\mu\in\opr{\wt{M}}$ satisfying \eqref{eq:marginals_finite_first_moment}, that is
$$
\int_{\widetilde{M}}\frac{d(x,0)}{d(x,y)}\,d\mu(x,y)<\infty .
$$
\end{theorem}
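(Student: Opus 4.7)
The plan is to prove the two implications separately, with $(\Leftarrow)$ being a Radon--Nikodym-style identification and $(\Rightarrow)$ requiring an approximation argument built on Theorem \ref{th:major_radon_wt}.

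$(\Leftarrow)$ Suppose $m = \dual{\Phi}\mu$ with $\mu \in \opr{\wt{M}}$ satisfying \eqref{eq:marginals_finite_first_moment}. First I would define the positive $\sigma$-finite Borel measure $\nu$ on $\wt{M}$ by $d\nu(x,y) = d(x,y)^{-1}\,d\mu(x,y)$ and set $\lambda_i := (\pp_i)_\sharp \nu$ for $i = 1,2$. The hypothesis gives $\int d(x,0)\,d\lambda_1 < \infty$; and the triangle estimate $d(y,0) \leq d(x,y) + d(x,0)$ yields $\int d(y,0)\,d\lambda_2 \leq \|\mu\| + \int d(x,0)/d(x,y)\,d\mu < \infty$. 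So each $\lambda_i$ is $\sigma$-finite positive with finite first moment, hence \cite[Corollary 5.8]{AP_measures} produces positive functionals $\widehat{\lambda_i} \in \lipfree{M}$. A direct calculation pairing $f \in \Lip_0(M)$ against $\dual{\Phi}\mu$ through $\nu$ shows $m = \widehat{\lambda_1} - \widehat{\lambda_2}$, which is majorisable.

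$(\Rightarrow)$ By \cite[Theorem 5.9]{AP_measures} I may write $m = \widehat{\lambda_1} - \widehat{\lambda_2}$ for mutually singular positive $\sigma$-finite Borel measures $\lambda_i$ of finite first moment with $\lambda_i(\{0\}) = 0$. Set $A_n := \{x: 1/n \leq d(x,0) \leq n\}$; then $\lambda_i^{(n)} := \lambda_i \restrict_{A_n}$ are finite Radon, and $m^{(n)} := \widehat{\lambda_1^{(n)} - \lambda_2^{(n)}} \to m$ in norm since $\|\widehat{\lambda_i} - \widehat{\lambda_i^{(n)}}\| \leq \int_{M \setminus A_n} d(x,0)\,d\lambda_i \to 0$. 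Theorem \ref{th:major_radon_wt} produces optimal representations $\mu^{(n)} \in \opr{\wt{M}}$ of $m^{(n)}$ satisfying both finiteness conditions, and inspection of that proof allows me to choose $\mu^{(n)}$ so that $\int d(x,0)/d(x,y)\,d\mu^{(n)} = \int d(x,0)\,d\lambda_1^{(n)} \leq \|m_1\|$ uniformly in $n$. Since $\|\mu^{(n)}\| = \|m^{(n)}\|$ is bounded, I would pass to a weak-$*$ convergent subsequence $\mu^{(n_k)} \to \mu$ in $\meas{\bwt{M}}$; then $\mu \geq 0$, $\dual{\Phi}\mu = m$ (from weak-$*$ continuity applied to $\Phi f \in C(\bwt{M})$), and lower semicontinuity of the norm combined with non-expansivity of $\dual{\Phi}$ forces $\|\mu\| = \|m\|$, so $\mu \in \opr{\bwt{M}}$. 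Fatou's lemma applied to the lower-semicontinuous extension of $d(x,0)/d(x,y)$ to $\bwt{M}$ transfers the uniform bound into \eqref{eq:marginals_finite_first_moment} for $\mu$.

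The crux is to verify that $\mu$ is concentrated on $\wt{M}$ rather than the boundary $\bwt{M} \setminus \wt{M}$. Escape to infinity in $\beta M$ is ruled out by Proposition \ref{pr:finite_concentration}, confining $\mu$ to $\pp^{-1}(\rcomp{M} \times \rcomp{M})$, and Lemma \ref{lm:measure_over_a_point} handles the base point. The key observation is that \eqref{eq:marginals_finite_first_moment} itself prevents $\mu$-mass at any $\zeta$ with $\pp_1(\zeta) = \pp_2(\zeta)$ and $d(\pp_1(\zeta),0) > 0$, since $d(x,0)/d(x,y)$ blows up there and would contradict the integrability just established. The remaining task is to pin both marginals of $\mu$ to $M$ itself, which I would do by combining the uniform first-moment bound on the marginals of $\pi^{(n)} := d(x,y)^{-1}\mu^{(n)}$ (yielding tightness at infinity via Chebyshev) with the Radonness of the finite restrictions $\lambda_i \restrict_{\{d(\cdot,0) \geq 1/k\}}$ (providing tightness in the bulk). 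This technical control of the limit is the main obstacle in the proof.
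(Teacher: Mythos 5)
Your $(\Leftarrow)$ direction is essentially correct and tracks the paper's argument closely. The paper restricts $\mu$ to $\pp_1^{-1}(M\setminus\{0\})$ before forming $\nu$ so that the pushforward $\lambda$ has no atom at $0$ and is seen to be inner regular; you omit this, but it is a minor point since atoms at $0$ can always be dropped without changing the induced functional. A slightly more substantive difference is that you form both marginals $\lambda_1,\lambda_2$ and show $m = \widehat{\lambda_1}-\widehat{\lambda_2}$, whereas the paper uses only the first marginal $\lambda_1$ and verifies directly that $\widehat{\lambda_1}-m \geq 0$; your variant is fine and perhaps more symmetric.

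For $(\Rightarrow)$, your use of hard truncations $A_n = \{1/n \leq d(\cdot,0)\leq n\}$ in place of the paper's Lipschitz cut-offs $\Pi_n$ is a legitimate alternative (with the caveat that you need the restricted measures $\lambda_i\restrict_{A_n}$ to be Radon; this is guaranteed by the restriction-Radonness property in \cite[Theorem 5.16]{AP_measures}, not Theorem 5.9 as you cite). Your idea of extending $\psi(x,y)=d(x,0)/d(x,y)$ to a lower semicontinuous function $\underline{\psi}$ on $\bwt{M}$ and applying the Portmanteau inequality $\int\underline{\psi}\,d\mu \leq \liminf_n\int\underline{\psi}\,d\mu_n$ is a genuinely nice reorganisation: it simultaneously yields \eqref{eq:marginals_finite_first_moment} for the limit $\mu$ \emph{and} rules out diagonal mass over points $p\in M\setminus\{0\}$, since $\underline{\psi}=+\infty$ there. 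The paper instead proves concentration on $\wt{M}$ first and then handles \eqref{eq:marginals_finite_first_moment} separately via $\psi_k=\min\{\psi,k\}$ and monotone convergence; the two are comparable in length, but your observation is cleaner for the diagonal.

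The genuine gap is the last step, which you explicitly flag as ``the main obstacle'': ruling out mass of $\mu$ with a coordinate in $\beta M\setminus M$. Your sketch proposes to combine Chebyshev tightness of the marginals $\lambda_i^{(n)}=(\pp_i)_\sharp\pi^{(n)}$ with Radonness of the restrictions of $\lambda_i$. This does not directly close the argument, for two reasons. First, the marginals of $\mu_n$ are \emph{not} the marginals of $\pi^{(n)}$: one has $d\mu_n = d(x,y)\,d\pi_n$, and the unbounded weight $d(x,y)$ means uniform tightness of $(\pp_i)_\sharp\pi_n$ does not transfer to $(\pp_i)_\sharp\mu_n$ without a uniform bound on $d(x,y)$, which one only has after first localising to $\pp^{-1}(\mathcal{B}_n\times\mathcal{B}_n)$ via Proposition \ref{pr:finite_concentration}. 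Second, even granting tightness of the approximants, what one actually needs is a statement about the \emph{limit} measure $\mu$, and weak$^*$ convergence in $\meas{\bwt{M}}$ does not preserve ``concentration on $M$'' for marginals: mass can leak to the corona. The paper's Claim is precisely the tool that repairs this: it shows that for every closed $K\subset\beta M$ with $0\notin K$ and $\lambda^+(K)=0$ one has $(\pp_1)_\sharp\mu(K)=0$, via a careful bump-function argument combining the localisation to balls $\mathcal{B}_n$ (to tame the $d(x,y)$ weight), outer regularity of $\lambda^+$ on the shell $\mathcal{B}'_{n+1}$, and the uniform domination $\omega_n^+\leq\lambda^+$. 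One then covers $\beta M\setminus M$ by the closed sets $\mathcal{K}_l=\{\xi:\inf_{x\in M}d(\xi,x)\geq 1/l\}$, on which $\lambda^\pm$ vanish, to conclude. Without some replacement for this Claim, your proof of $(\Rightarrow)$ is incomplete.
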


\begin{proof}
For the first part of the theorem, assume that $m\in\lipfree{M}$ is majorisable on annuli.
For every $n\in\NN$, denote $m_n=W_{\Pi_n}^*(m)$, and let $\lambda^+,\lambda^-,\lambda_n$ be the measures on $M$ constructed in Lemma \ref{lem:majorisable_on_annuli}. By Theorem \ref{th:major_radon_wt}, there exist measures $\mu_n\in\opr{\wt{M}}$ representing $m_n$. Moreover, details of the proof of Theorem \ref{th:major_radon_wt} reveal that $\mu_n$ are given by $d\mu_n(x,y)=d(x,y)\,d\pi_n(x,y)$ for some $\pi_n\in\meas{\wt{M}}$ satisfying $(\pp_1)_\sharp\pi_n=\nu_n^+$ and $(\pp_2)_\sharp\pi_n=\nu_n^-$, where $\nu_n$ is again a measure inducing $m_n$ defined as $\nu_n=\lambda_n-\lambda_n(M)\cdot\delta_0\in\meas{M}$.

Thanks to $M$ being a complete metric space, we may consider measures $\mu_n$ and $(\pp_i)_\sharp(\mu_n)$, $\lambda_n$, $\nu_n$ as Radon measures on $\bwt{M}$ and $\beta M$ that are concentrated on $\wt{M}$ and $M$, respectively. Similarly, the measures $\lambda^+$ and $\lambda^-$ can be extended to positive $\sigma$-finite Borel measures on $\beta M$ that are concentrated on $M$ and have the property that $\lambda^\pm\restrict_{\overline{A_n}^{\beta M}}$ are Radon. The inequalities $\lambda_n^{+}\leq \lambda^{+}$ and $\lambda_n^{-}\leq \lambda^{-}$ for any $n\in \NN$ following from the construction in Lemma \ref{lem:majorisable_on_annuli} will be preserved on $\beta M$.

Because $\norm{\mu_n}=\norm{m_n}$ and the sequence $(m_n)$ is norm convergent, $(\mu_n)$ is bounded in $\meas{\beta\wt{M}}$ and we may find a subnet $(\mu_{n_i})$ and a measure $\mu\in\meas{\beta\wt{M}}$ such that $\int_{\beta\wt{M}}\varphi\,d\mu=\lim_i\int_{\beta\wt{M}}\varphi\,d\mu_{n_i}$ for every $\varphi\in C(\bwt{M})$. We will show that $m$ is a convex integral of molecules with representation $\mu$. Indeed, for every $f\in\Lip_0(M)$ we have
$$
\duality{m,f}=\lim_i\duality{m_{n_i},f}=\lim_i\int_{\beta\wt{M}}(\Phi f)\,d\mu_{n_i}
=\int_{\beta\wt{M}}(\Phi f)\,d\mu=\duality{f,\Phi^*\mu},
$$
so $m=\Phi^*\mu$.
Moreover, as a weak$^*$ limit of positive measures, $\mu$ is also positive. It follows that
$$\|\mu\|=\int_{\bwt{M}}\boldsymbol{1}\,d\mu=\lim_i\int_{\bwt{M}}\boldsymbol{1}\,d\mu_{n_i}=\lim_i\|\mu_{n_i}\|=\lim_i\|m_{n_i}\|=\|m\|$$
and $\mu\in\opr{\bwt{M}}$ is an optimal representation of $m$.

It remains to be shown that $\mu$ is concentrated on $\wt{M}$. To this end we first observe a sort of absolute continuity of the marginals of measure $\mu$ with respect to $\lambda^{\pm}$.

\begin{claim}
If $K$ is a closed subset of $\beta M$ that does not contain $0$ and $\lambda^+(K)=0$, then $(\pp_1)_\sharp\mu(K)=0$. The same is true also for $\lambda^-$ and $(\pp_2)_\sharp\mu$.
\end{claim}

\begin{proof}[Proof of the Claim]
Fix $\varepsilon>0$ and, for $n\in\NN$, denote
\begin{align*}
\mathcal{B}_n &= \set{\xi\in\beta M : d(\xi,0)\leq 2^n} & \mathcal{B}'_n &= \set{\xi\in\beta M : 2^{-n}\leq d(\xi,0)\leq 2^n} \\
\mathcal{U}_n &= \set{\xi\in\beta M : d(\xi,0)< 2^n} & \mathcal{U}'_n &= \set{\xi\in\beta M : 2^{-n}< d(\xi,0)< 2^n}
\end{align*}
where $d(\xi,0)$ is as in \eqref{eq:Mr_definition}.
Observe that $\mu$ is concentrated on $\pp^{-1}(\rcomp{M}\times\rcomp{M})$ by Proposition \ref{pr:finite_concentration}, and 
$\rcomp{M}=\bigcup_{n=1}^\infty \mathcal{B}_n$. So there exists $n\in\NN$ such that
\begin{align*}
(\pp_1)_\sharp\mu(K) = \mu\left(\pp_1^{-1}(K)\right) &\leq \varepsilon + \mu\left(\pp_1^{-1}(K) \cap \pp^{-1}(\mathcal{B}_n\times \mathcal{B}_n)\right) \\
&= \varepsilon + \mu\left(\pp^{-1}((K\cap \mathcal{B}_n)\times \mathcal{B}_n)\right) .
\end{align*}
Because $0\notin K$, we may choose $n$ large enough that $K\cap\mathcal{B}_n\subset\mathcal{B}'_n$. Since $\lambda^+(K\cap\mathcal{B}_n)=0$ and $\lambda^+$ is Radon on $\mathcal{B}'_{n+1}\subset \overline{A_{n+1}}^{\beta M}$, we may find an open set $V$ in $\beta M$ such that $K\cap\mathcal{B}_n\subset V\subset\mathcal{U}'_{n+1}$ and $\lambda^+(V)\leq 2^{-(n+2)}\varepsilon$.
Define a function $\varphi\in C(\bwt{M})$ so that $0\leq \varphi\leq 1$, $\varphi=1$ on the compact set $\pp^{-1}((K\cap \mathcal{B}_n) \times \mathcal{B}_n)$ and $\varphi=0$ on the compact set $\bwt{M}\setminus \pp^{-1}(V\times \mathcal{U}_{n+1})$. Then we get
\begin{align*}
\mu\left(\pp^{-1}((K\cap \mathcal{B}_n)\times \mathcal{B}_n)\right) &\leq  \int_{\bwt{M}}\varphi\,d\mu = \lim_i\int_{\bwt{M}}\varphi\,d\mu_{n_i} \leq \lim_i \mu_{n_i}\left(\pp^{-1}(V\times \mathcal{U}_{n+1})\right) \\
&= \lim_i \int_{\pp^{-1}(V\times \mathcal{U}_{n+1})\cap\wt{M}}d(x,y)\,d\pi_{n_i}(x,y) \\
&\leq 2^{n+2}\lim_i \int_{\pp^{-1}(V\times \mathcal{U}_{n+1})\cap\wt{M}}\,d\pi_{n_i} \\
&\leq 2^{n+2}\lim_i \pi_{n_i}\left(\pp_1^{-1}(V)\right) \\
&= 2^{n+2}\lim_i\nu_{n_i}^+(V) \\
&= 2^{n+2}\lim_i\lambda_{n_i}^+(V) \leq 2^{n+2}\lambda^+(V) \leq \varepsilon
\end{align*}
where the equality $\nu_{n_i}^+(V)=\lambda_{n_i}^+(V)$ holds because $0\notin V$. We conclude that $(\pp_1)_\sharp\mu(K)\leq 2\varepsilon$, and letting $\varepsilon$ tend to 0 yields the first part of the Claim. A similar argument proves the second part.
\end{proof}

Let us now proceed to prove that $\mu(\bwt{M}\setminus \wt{M})=0$. We write 
$$\bwt{M}\setminus \wt{M}=\pp_1^{-1}(\beta M\setminus M)\cup \pp_2^{-1}(\beta M\setminus M)\cup \left\{\xi\in\bwt{M}: \pp_1(\xi)=\pp_2(\xi)\in M\right\}$$ and treat these subsets separately. 

For every $l\in\NN$ define
$$
\mathcal{K}_l=\set{\xi\in\beta{M}:\inf_{x\in M}d(\xi,x)\geq \frac{1}{l}}
$$
where, again, $d(\xi,x)$ stands for the evaluation at $\xi$ of the continuous extension of $d(\cdot,x)$ to $\beta M$.
Clearly, $\mathcal{K}_l$ is a subset of $\beta M\setminus M$ closed in $\beta M$. Since $\lambda^+$ and $\lambda^-$ are concentrated on $M$, $\lambda^+(\mathcal{K}_l)=0=\lambda^-(\mathcal{K}_l)$ and the Claim above implies that $\mu(\pp_1^{-1}(\mathcal{K}_l))=0=\mu(\pp_2^{-1}(\mathcal{K}_l))$. It is shown in \cite{Weaver_old} as a part of the proof of Proposition 2.1.6 that $\beta M\setminus M=\bigcup_{l\in\NN}\mathcal{K}_l$ (see also \cite[Lemma 4.10 and Theorem 4.11]{AP_measures} and references therein). So, we can conclude that $\mu(\pp_1^{-1}(\beta M\setminus M))=0=\mu(\pp_2^{-1}(\beta M\setminus M))$.

In the last step, we deal with the ``diagonal'' in $\bwt{M}$, the set $\mathcal{X}=\{\xi\in\bwt{M}: \pp_1(\xi)=\pp_2(\xi)\in M\}$. We partition it as
$$\mathcal{X}=\pp^{-1}(0,0)\cup\left(\mathcal{X}\cap \pp_2^{-1}(B^+\setminus\{0\})\right)\cup\left(\mathcal{X}\cap \pp_1^{-1}(\beta M\setminus (B^+\cup\{0\}))\right)\,,$$
where $B^+$ and $B^-$ are disjoint Borel subsets of $M$ on which the measures $\lambda^+$ and $\lambda^-$ are concentrated, respectively. Let $\mathcal{Y}$ be an arbitrary compact subset of $\mathcal{X}\cap \pp_2^{-1}(B^+\setminus\{0\})$. Then $\pp_2(\mathcal{Y})$ is a compact subset of $B^+\setminus\{0\}$ and $\lambda^-(\pp_2(\mathcal{Y}))=0$. So, the Claim yields 
$$\mu(\mathcal{Y})\leq \mu\left(\pp_2^{-1}(\pp_2(\mathcal{Y}))\right)=(\pp_2)_\sharp\mu(\pp_2(\mathcal{Y}))=0$$ and by the regularity of the measure $\mu$ we infer that $\mu\left(\mathcal{X}\cap \pp_2^{-1}(B^+\setminus\{0\})\right)=0$. Similarly, if we take any compact subset $\mathcal{Z}$ of $\mathcal{X}\cap \pp_1^{-1}(\beta M\setminus (B^+\cup\{0\}))$, then $\pp_1(\mathcal{Z})$ is a compact subset of $\beta M\setminus (B^+\cup\{0\})$ and $\lambda^+(\pp_1(\mathcal{Z}))=0$. Applying the Claim again gives
$$\mu(\mathcal{Z})\leq \mu\left(\pp_1^{-1}(\pp_1(\mathcal{Z}))\right)=(\pp_1)_\sharp\mu(\pp_1(\mathcal{Z}))=0$$ and therefore also $\mu\left(\mathcal{X}\cap \pp_1^{-1}(\beta M\setminus (B^+\cup\{0\}))\right)=0$. Finally, $\mu\left(\pp^{-1}(0,0)\right)=0$ by Lemma \ref{lm:measure_over_a_point}, and we obtain that $\mu(\mathcal{X})=0$ as desired. This completes the proof that $\mu\in\opr{\wt{M}}$ and thus the proof of the first assertion of the theorem.

For the second statement of the theorem, assume now that $m$ itself is majorisable. Then clearly it is majorisable on annuli and hence a convex integral of molecules by the first part of the proof. Moreover, by \cite[Theorem 5.16]{AP_measures}, the measures $\lambda^{\pm}$ in Lemma \ref{lem:majorisable_on_annuli} applied to a majorisable $m$ can be chosen to have a finite first moment. We will verify that using such measures in the construction of the representation $\mu$ above yields condition \eqref{eq:marginals_finite_first_moment}. Denote
$$\psi:\wt{M}\to\RR:(x,y)\mapsto\frac{d(x,0)}{d(x,y)}.$$
Take a sequence of non-negative functions $(\psi_k)\subset C_b(\wt{M})$ increasing pointwise to $\psi$, e.g. $\psi_k=\min\set{\psi,k}$.
Then for each $k,n\in\NN$ we have
\begin{align*}
\int_{\wt{M}}\psi_k\,d\mu_{n}&\leq \int_{\wt{M}}\psi\,d\mu_{n}=\int_{\wt{M}}d(x,0)\,d\pi_{n}(x,y)=\int_{M}d(x,0)\,d((\pp_1)_{\sharp}\pi_{n})(x)\\
&=\int_{M}d(x,0)\,d\nu^+_n(x)
=\int_{M}d(x,0)\,d\lambda^+_n(x)\\
&\leq \int_{M}d(x,0)\,d\lambda^+(x)<\infty
\end{align*}
because $\lambda^+$ has a finite first moment. Considering the net $(\mu_{n_i})$ converging weak$^*$ to $\mu$ that we selected earlier, we obtain for every $k\in\NN$ that
$$\int_{\wt{M}}\psi_k\,d\mu=\lim_i\int_{\wt{M}}\psi_k\,d\mu_{n_i}\leq \int_{M}d(x,0)\,d\lambda^+(x)<\infty$$
as all the measures $\mu_n$ and $\mu$ are concentrated on $\wt{M}$. Therefore, by the monotone convergence theorem,
$$\int_{\wt{M}}\psi\,d\mu=\int_{\wt{M}}\frac{d(x,0)}{d(x,y)}\,d\mu(x,y)<\infty,$$
which is exactly the desired condition \eqref{eq:marginals_finite_first_moment}.

Finally, let $m$ be a convex integral of molecules with a representation $\mu\in\opr{\wt{M}}$ satisfying \eqref{eq:marginals_finite_first_moment}. If we consider the positive Borel measure $\omega$ on $\wt{M}$ given by
$$
d\omega=d(x,y)^{-1}d\left(\mu\restrict_{\pp_1^{-1}(M\setminus\{0\})}\right),
$$
then for any $n\in\NN$,
$$
\omega(\pp_1^{-1}(A_n)) \leq 2^{n+1}\int_{\pp_1^{-1}(A_n)}\frac{d(x,0)}{d(x,y)}\,d\mu(x,y) < \infty
$$
by \eqref{eq:marginals_finite_first_moment}, so $\omega$ is $\sigma$-finite. If we next define
$\lambda=(\pp_1)_{\sharp}\omega$, it follows that $\lambda$ is a positive Borel measure on $M$ satisfying $\lambda(\set{0})=0$. The measure $\lambda$ is also $\sigma$-finite, because $\lambda(A_n)<\infty$ for every $n\in\NN$, and \emph{inner regular}  (i.e. $\lambda(A)=\sup\{\lambda(K): K\subset A\textup{ compact}\}$ for every Borel set $A\subset M$). The latter can be seen by writing any Borel set $A\subset M$ as $A=(A\cap\{0\})\cup\bigcup_{n=1}^{\infty}(A\cap A_n)$ and using the fact that $\omega$ is inner regular on the sets $\pp_1^{-1}(A_n)$.
Moreover, hypothesis \eqref{eq:marginals_finite_first_moment} implies that $\lambda$ has a finite first moment.
Therefore $\lambda$ induces a positive functional $\widehat{\lambda}\in\lipfree{M}$ as shown in \cite[Proposition 4.4 and Proposition 4.7]{AP_measures}. Because for every $f\in\Lip_0(M)$ satisfying $f\geq 0$ pointwise we have
\begin{align*}
\duality{m,f} &= \int_{\wt{M}}\Phi f\,d\mu \leq \int_{\wt{M}}\frac{f(x)}{d(x,y)}\,d\mu(x,y) = \int_{\wt{M}}\frac{f(x)}{d(x,y)}\,d\left(\mu\restrict_{\pp_1^{-1}(M\setminus\{0\})}\right)(x,y)\\
&= \int_{\wt{M}}f(\pp_1(x,y))\,d\omega(x,y) = \int_{M}f(x)\,d((\pp_1)_\sharp\omega)(x) = \int_{M}f\,d\lambda = \duality{\widehat{\lambda},f},
\end{align*}
$\widehat{\lambda}-m$ is also a positive functional in $\lipfree{M}$ and $m=\widehat{\lambda}-(\widehat{\lambda}-m)$ is majorisable. This concludes the proof of the theorem.
\end{proof}

Examples of convex integrals of molecules that are majorisable on annuli but not majorisable, or not even majorisable on annuli, can be obtained from \cite[Example 4.17]{AP_measures} and \cite[Example 3.24]{Weaver2}, respectively. Here we combine both to present such examples in the familiar setting of the metric space $M=[0,\infty)$.

\begin{example}[{cf. \cite[Example 4.17]{AP_measures} and \cite[Example 3.24]{Weaver2}}]\label{ex:not_majorisable}
Let $M=[0,\infty)$ with the usual metric. Pick numbers $a_n>0$, $n \in \NN$, so that $\sum_{n=1}^\infty a_n < \infty$, and select sequences $(x_n)_{n=1}^\infty, (y_n)_{n=1}^\infty\subset (0,\infty)$ so that all points are distinct, their union is a discrete space and $x_n>y_n$ for every $n\in\NN$. Define $\mu=\sum_{n=1}^\infty a_n\delta_{(x_n,y_n)} \in \meas{\wt{M}}$, $m=\Phi^*\mu=\sum_{n=1}^\infty a_n m_{x_ny_n} \in \lipfree{M}$ and $f \in \Lip_0(M)$ by $f(x)=x$. We see that $m$ is a convex integral (moreover a convex series) of molecules by applying Lemma \ref{lm:norming_phi1} to $f$.

Let us suppose that we can write $m=m^+-m^-$, where $m^\pm \in \lipfree{M}$ are positive.
Pick $f_n \in \Lip_0(M)$, $n \in \NN$, satisfying $0 \leq f_n \leq f$, $f_n(x_k)=x_k$ for $k=1,\ldots,n$, $f_n(x_k)=0$ for $k>n$, and $f_n(y_k)=0$ for all $k\in\NN$. Then
\[
\duality{m^+,f} \geq \duality{m^+,f_n} \geq \duality{m,f_n} = \sum_{k=1}^n \frac{a_k x_k}{x_k-y_k}, \quad n \in \NN.
\]
Thus if we set $a_n=2^{-n}$, $x_n=2^n$ and $y_n=x_n - 1$, $n \in \NN$, we get a contradiction and therefore $m$ is not majorisable. So, no optimal representation of $m$ concentrated on $\wt{M}$ can satisfy condition \eqref{eq:marginals_finite_first_moment}. The functional $m$ is however majorisable on annuli because the intersection of its support with each annulus is finite, and the representation $\mu\in\opr{\wt{M}}$ above meets condition \eqref{eq:weighted finite}.

On the other hand, if we set $a_n=2^{-n}$, $x_n=\frac{1}{2}+2^{-n}$ and $y_n=x_n-5^{-n}$, then $m$ is not majorisable on annuli because it is not majorisable, and its support is included in the first annulus $A_1$. Also, all optimal representations $\nu$ of $m$ concentrated on $\wt{M}$ fail both conditions \eqref{eq:weighted finite} and \eqref{eq:marginals_finite_first_moment}. Condition \eqref{eq:marginals_finite_first_moment} cannot hold because $m$ is not majorisable, and \eqref{eq:weighted finite} would in this case imply \eqref{eq:marginals_finite_first_moment}. Indeed, consider $g \in \norming{m}$ given by $g(x)=\max\set{1-|1-x|,0}$. By Lemma \ref{lm:norming_phi1}, given $(x,y) \in \supp(\nu) \cap \wt{M}$, $1=\Phi g(x,y) \leq g(x)/d(x,y)$ implies $d(x,0) \leq 2$. As $\nu$ is concentrated on $\supp(\nu) \cap \wt{M}$, if \eqref{eq:weighted finite} held then so would \eqref{eq:marginals_finite_first_moment}.
\end{example}

Next we present a condition for $\lipfree{M}$ which ensures that all of its elements are convex integrals of molecules. This condition is neither sufficient nor necessary for the majorisability on annuli of the elements of $\lipfree{M}$ by \cite[Theorem 6.2]{AP_measures} and \cite[Theorem 5]{DKP_2016}. It holds, for instance, when $M$ is a Lebesgue-null closed subset of $\RR$. So, in particular, it generalises the examples constructed above.

\begin{proposition}
\label{pr:isometric_l1}
If $\lipfree{M}$ is isometric to $\ell_1(\Gamma)$ for some index set $\Gamma$, then all elements of $\lipfree{M}$ are convex series of molecules.
\end{proposition}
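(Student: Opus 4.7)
My plan is to transport the canonical $\ell_1(\Gamma)$-decomposition back to $\lipfree{M}$ via the given isometric isomorphism $T \colon \lipfree{M} \to \ell_1(\Gamma)$, and then to identify the pulled-back basis vectors with molecules. Set $v_\gamma := T^{-1}(e_\gamma) \in S_{\lipfree{M}}$ for each $\gamma \in \Gamma$; each $v_\gamma$ is an extreme point of $B_{\lipfree{M}}$ since $e_\gamma$ is extreme in $B_{\ell_1(\Gamma)}$. For any $m \in \lipfree{M}$, expand $Tm = \sum_n c_n e_{\gamma_n}$ in the canonical $\ell_1$-basis (with countable support and $\sum_n \abs{c_n} = \|Tm\|_1 = \|m\|$) and apply $T^{-1}$ to obtain
$$
m = \sum_n c_n v_{\gamma_n}, \qquad \sum_n \abs{c_n} = \|m\|.
$$
If each $v_\gamma$ equals $\pm m_{x_\gamma y_\gamma}$ for some pair $x_\gamma \neq y_\gamma \in M$, then absorbing signs by swapping $x_\gamma \leftrightarrow y_\gamma$ where needed immediately yields $m$ as a convex series of molecules.

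The crux of the proof is therefore to show each extreme $v_\gamma$ is, up to sign, an elementary molecule. I would analyse an optimal de Leeuw representation $\mu_\gamma \in \opr{\bwt{M}}$ of $v_\gamma$, supplied by Proposition \ref{pr:opr_facts}(a), and split $\mu_\gamma = \mu_\gamma\restrict_{\wt{M}} + \mu_\gamma\restrict_{\bwt{M}\setminus\wt{M}}$. By Propositions \ref{pr:opr_facts}(d) and \ref{pr:wt_bochner}, both pieces are optimal representations of elements of $\lipfree{M}$, and their norms add to $\|v_\gamma\| = 1$; extremality of $v_\gamma$ forces one piece to vanish. Ruling out the degenerate case $\mu_\gamma\restrict_{\wt{M}} = 0$ requires extra input from the $\ell_1(\Gamma)$-structure, most naturally the Radon--Nikod\'ym property, which makes $v_\gamma$ strongly exposed and so prevents its de Leeuw representation from concentrating entirely ``at infinity'' (beyond what is already encoded in Proposition \ref{pr:finite_concentration}). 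Once $\mu_\gamma$ lives on $\wt{M}$, a second extremality argument combined with Theorem \ref{th:cyclic_monotone_optimal} -- any Borel splitting of $\supp(\mu_\gamma) \cap \wt{M}$ into two positive-measure pieces would give a forbidden convex decomposition of $v_\gamma$ in $S_{\lipfree{M}}$ -- collapses $\mu_\gamma$ to a single Dirac mass, identifying $v_\gamma$ as a molecule.

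The principal obstacle is this molecule-identification step for $v_\gamma$, and especially the ruling-out of the purely singular case $\mu_\gamma\restrict_{\wt{M}} = 0$. That step genuinely uses the $\ell_1(\Gamma)$-hypothesis (via RNP, the Schur property, or strong exposure), rather than mere extremality, and is the only place where the hypothesis is used beyond the passive role it plays in providing the decomposition of the first paragraph. The remainder -- assembling the series, absorbing signs, indexing -- is routine bookkeeping.
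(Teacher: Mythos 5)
There is a genuine gap at the step you yourself flag as the crux: ruling out $\mu_\gamma\restrict_{\wt{M}}=0$. You assert that the $\ell_1(\Gamma)$-hypothesis, via RNP and strong exposure, ``prevents the de Leeuw representation from concentrating entirely at infinity,'' but this is not proved and is not a result available in the paper or a standard fact. Proposition \ref{pr:finite_concentration} only yields concentration on $\pp^{-1}(\rcomp{M}\times\rcomp{M})$, which is strictly weaker than concentration on $\wt{M}$, and Theorem \ref{th:isometric_fat_cantor_example} shows that optimal representations concentrated entirely outside $\wt{M}$ do occur in general. The correct way to rescue your approach is to bypass de Leeuw representations at this point altogether: $e_\gamma$ is a strongly exposed (indeed denting) point of $B_{\ell_1(\Gamma)}$, so $v_\gamma = T^{-1}e_\gamma$ is strongly exposed in $B_{\lipfree{M}}$, and it is known from the literature (e.g. \cite{GPPR}, or via preserved extreme points in \cite{Weaver95,AG}) that such points of $B_{\lipfree{M}}$ are elementary molecules. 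You gesture in this direction but never make the citation, and the mechanism you propose instead does not work. There are also smaller errors: it is false that both $\dual{\Phi}(\mu_\gamma\restrict_{\wt{M}})$ and $\dual{\Phi}(\mu_\gamma\restrict_{\bwt{M}\setminus\wt{M}})$ are known to lie in $\lipfree{M}$ (Proposition \ref{pr:wt_bochner} covers only the first), and extremality does not ``force one piece to vanish'' --- Lemma \ref{lm:lemma_10_aliaga} only makes each nonzero piece a scalar multiple of $v_\gamma$. The final collapse to a Dirac mass is also argued incorrectly: a Borel splitting of $\supp(\mu_\gamma)\cap\wt{M}$ does not produce a forbidden convex decomposition, since both halves could represent scalar multiples of $v_\gamma$; the right move (as in the proof of Theorem \ref{th:extreme_wt}) is to restrict to shrinking product neighbourhoods of a point $(x,y)$ and invoke Proposition \ref{pr:aliaga_lemma_8} to localise $\supp(v_\gamma)$ to $\{x,y\}$.

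For comparison, the paper's proof is structurally quite different and avoids extremal-structure machinery entirely. It invokes the theorem of Dalet--Kaufmann--Proch\'azka \cite{DKP_2016} characterising metric spaces with $\lipfree{M}\cong\ell_1(\Gamma)$ (subsets of $\RR$-trees containing all branching points and of null length measure), reduces to a separable subset $A$ containing $\supp(m)$, and then uses Godard's explicit isometry $T\colon\ell_1\to\lipfree{A}$ from \cite{Godard_2010}, which sends the standard basis to concrete molecules $m_{x_ny_n}$ associated to the length-gaps of $A$. This makes the molecule identification immediate and constructive. Your abstract route --- pull back basis vectors through the isometry and identify them via extremal structure --- is a legitimate alternative strategy, but as written the key identification step is not justified.
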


\begin{proof}
According to \cite[Theorem 5]{DKP_2016}, $\lipfree{M}$ is isometric to $\ell_1(\Gamma)$ precisely when $M$ is a subset of an $\RR$-tree that contains all branching points and has null length measure (see \cite{DKP_2016} for the definitions of these notions). Fix $m\in\lipfree{M}$, then $S=\supp(m)\cup\set{0}$ is separable and thus contained in a separable $\RR$-tree. Hence the set $A=\cl{S\cup\Br(S)}$ (where $\Br(S)$ denotes the set of branching points of $S$) is separable, and contained in $M$ because $\Br(S)\subset\Br(M)$. So by the same theorem, $\lipfree{A}$ is isometric to $\ell_1$. This implication was in fact proved by Godard in \cite[Corollary 3.4]{Godard_2010}, where he also gave an explicit description of the isometry $T:\ell_1\to\lipfree{A}$, and it is easy to verify that it maps elements of the standard basis $(e_n)$ to molecules, say $Te_n=m_{x_ny_n}$ (where $[x_n,y_n]$ are precisely the ``gaps'' in $A$ of positive length). Then we may write
$$
m = T(T^{-1}m) = T\pare{\sum_{n=1}^\infty\duality{T^{-1}m,\dual{e}_n}e_n} = \sum_{n=1}^\infty a_n m_{x_ny_n}
$$
where $(\dual{e}_n)\subset\dual{\ell}_1$ are the coordinate functionals and the coefficients $a_n=\duality{T^{-1}m,\dual{e}_n}$ satisfy $\sum\abs{a_n}=\norm{T^{-1}m}_1=\norm{m}$.
\end{proof}

\subsection{Consequences for convex series of molecules}

We will now derive several consequences of Theorems \ref{th:major_radon_wt} and \ref{th:majorizable_convex_integrals}. The first one follows from the trivial observation: if every element of $\lipfree{M}$ is majorisable, then it will also be a convex integral of molecules. Metric spaces where the former condition holds were identified in \cite[Section 6]{AP_measures} and given the name \emph{radially discrete}. They are precisely those spaces $M$ where there exists $\alpha>0$ such that $d(x,y)\geq\alpha\cdot d(x,0)$ for all $x\neq y\in M$. This condition implies that every point other than $0$ is isolated, so radially discrete spaces are scattered. Thus, combining \cite[Theorem 6.2]{AP_measures}, Theorem \ref{th:majorizable_convex_integrals} and Proposition \ref{pr:scattered_series} yields:

\begin{corollary}
\label{cr:rd_cs}
If $M$ is radially discrete then every element of $\lipfree{M}$ is a convex series of molecules.
\end{corollary}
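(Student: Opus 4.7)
The plan is to combine three ingredients that are already in place, so the proof is essentially a chain of implications rather than a new argument. First I would unpack the definition of \emph{radially discrete}: $M$ is radially discrete when there is $\alpha>0$ with $d(x,y)\geq\alpha\cdot d(x,0)$ for all $x\neq y\in M$. Setting $y=0$ shows every non-base point $x\in M$ is isolated (indeed, the ball of radius $\alpha\cdot d(x,0)/2$ about $x$ contains only $x$), and of course $0$ is not required to be isolated. From this it follows that $M$ is scattered, since the Cantor--Bendixson derivative of $M$ can contain only $0$.

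Next, I would invoke the cited characterisation from \cite[Theorem 6.2]{AP_measures}, which tells us that radial discreteness of $M$ implies every element of $\lipfree{M}$ is majorisable (i.e.\ a difference of two positive functionals in $\lipfree{M}$). Applying Theorem \ref{th:majorizable_convex_integrals}, every majorisable element admits a representation $\mu\in\opr{\wt{M}}$ satisfying \eqref{eq:marginals_finite_first_moment}, and in particular is a convex integral of molecules in the sense of Definition \ref{df:convex_integral}.

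Finally, since $M$ is scattered, Proposition \ref{pr:scattered_series} upgrades the convex integral representation to a convex \emph{series} representation: any $\mu\in\opr{\wt{M}}$ on a scattered $M$ is necessarily discrete, so the Bochner integral $\int_{\wt{M}}m_{xy}\,d\mu(x,y)$ from Proposition \ref{pr:wt_bochner} collapses to a countable convex sum $\sum_n a_n m_{x_ny_n}$ with $\sum_n a_n=\|m\|$. This yields the conclusion.

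The proof contains no obstacle worth isolating; everything rests on the previously proved theorems. The only point that a reader might pause on is the passage from radial discreteness to scatteredness, which is why I would state it explicitly, and the observation that the hypothesis of Proposition \ref{pr:scattered_series} applies even though the optimal representation produced by Theorem \ref{th:majorizable_convex_integrals} is not given to be discrete a priori — but that proposition does exactly this work.
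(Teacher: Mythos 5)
Your proof is correct and is essentially identical to the paper's, which likewise chains \cite[Theorem 6.2]{AP_measures}, Theorem~\ref{th:majorizable_convex_integrals} and Proposition~\ref{pr:scattered_series}, with scatteredness of $M$ following from the fact that every non-base point is isolated. (One small wording slip: ``setting $y=0$'' does not yield isolation of $x$; rather, the condition $d(x,y)\geq\alpha\,d(x,0)$ holding for \emph{all} $y\neq x$ gives the ball argument in your parenthetical.)
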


When $M$ is radially discrete and the base point of $M$ is isolated, then $M$ is actually \emph{uniformly discrete}, meaning that $\inf\set{d(x,y)\,:\,x\neq y\in M}>0$. Such spaces were called \emph{radially uniformly discrete} (RUD) in \cite{AP_measures}.
Despite the formulation, being RUD does not depend on the choice of base point.
RUD spaces include, but are not limited to, all bounded and uniformly discrete spaces. Moreover, by \cite[Corollary 6.3]{AP_measures} they are exactly those $M$ such that every element of $\lipfree{M}$ is induced by a Radon measure. Thus $\lipfree{M}$ consists entirely of convex series of molecules satisfying both finiteness conditions \eqref{eq:weighted finite} and \eqref{eq:marginals_finite_first_moment} when $M$ is RUD. 

If $M$ is uniformly discrete, then any element $m\in\lipfree{M}$ with bounded support is majorisable. This follows by considering $m$ as an element of $\lipfree{\supp(m)\cup\{0\}}$. Indeed, bounded subsets of such $M$ are RUD spaces, so by \cite[Corollary 6.3]{AP_measures} $m$ is majorisable in $\lipfree{\supp(m)\cup\{0\}}$, but any positive element thereof is a positive element of $\lipfree{M}$. Since $W_{\Pi_n}^*(m)$ have bounded support for every $m\in\lipfree{M}$ and $n\in\NN$, all elements of $\lipfree{M}$ are majorisable on annuli when $M$ is uniformly discrete. Hence, applying Theorem \ref{th:majorizable_convex_integrals} and Proposition \ref{pr:scattered_series} again, we obtain:

\begin{corollary}
\label{cr:ud_cs}
If $M$ is uniformly discrete then every element of $\lipfree{M}$ is a convex series of molecules.
\end{corollary}

\bigskip

A natural condition to consider for a convex series of molecules
$$
m=\sum_{n=1}^\infty a_n m_{x_ny_n}
$$
on any metric space $M$ is that the sets $\set{x_n}$ and $\set{y_n}$ of first and second coordinates be disjoint. Suppose that this does not hold so that, for instance, the terms $a m_{xy} + b m_{yz}$ appear in the sum. Then the corresponding de Leeuw representation has positive mass at $(x,y)$ and $(y,z)$, and Corollary \ref{cr:metric_triples_pm} implies that $y\in [x,z]$. In that case
\begin{equation}
\label{eq:molecule_splitting}
m_{xz} = \frac{d(x,y)}{d(x,z)}m_{xy} + \frac{d(y,z)}{d(x,z)}m_{yz}
\end{equation}
is a convex combination of $m_{xy}$ and $m_{yz}$, and it is easy to check that the terms $am_{xy}+bm_{yz}$ can then be replaced by either $a' m_{xz} + b' m_{yz}$ or $a' m_{xy} + b' m_{xz}$ for some coefficients $a',b'$ such that $a'+b'=a+b$. Thus it is always possible to remove a single instance of overlap between first and second coordinates, or finitely many of them. But we do not know whether this is possible in general when there may be infinitely many overlapping molecules.

\begin{question}
Suppose that $m\in\lipfree{M}$ can be written as a convex series of molecules $\sum_n a_nm_{x_ny_n}$. Is it always possible to choose an expression where $x_i\neq y_j$ for all $i,j$?
\end{question}

We are only able to provide a positive answer for general $M$ if the series of molecules satisfies the finiteness conditions \eqref{eq:weighted finite} and \eqref{eq:marginals_finite_first_moment}, which in case of series of molecules read as \eqref{eq:convex_series_finite_sum} and \eqref{eq:convex_series_finite_first_moment} below. This follows from observing that the condition $(\pp_1)_\sharp\mu \perp (\pp_2)_\sharp\mu$ in Theorem \ref{th:major_radon_wt} is a continuous counterpart to the non-overlapping of first and second coordinates in the discrete case. 

\begin{corollary}
\label{cr:cs_non_overlapping}
Suppose that $m\in\lipfree{M}$ can be written as a convex series of molecules $m=\sum_{n=1}^\infty a_nm_{x_ny_n}$ such that
\begin{equation}
\label{eq:convex_series_finite_sum}
\sum_{n=1}^\infty \frac{a_n}{d(x_n,y_n)} < \infty
\end{equation}
and
\begin{equation}
\label{eq:convex_series_finite_first_moment}
\sum_{n=1}^\infty a_n\frac{d(x_n,0)}{d(x_n,y_n)} < \infty .
\end{equation}
Then it can also be written as a convex series of molecules $m=\sum_{n=1}^\infty b_nm_{p_nq_n}$ such that
\begin{enumerate}[label={\upshape{(\alph*)}}]
\item $\sum_n\frac{b_n}{d(p_n,q_n)}<\infty$ and $\sum_nb_n\frac{d(p_n,0)}{d(p_n,q_n)}<\infty$,
\item $\set{p_n:n\in\NN}\subset\set{x_n:n\in\NN}$,
\item $\set{q_n:n\in\NN}\subset\set{y_n:n\in\NN}$, and
\item the sets $\set{p_n:n\in\NN}$ and $\set{q_n:n\in\NN}$ are disjoint.
\end{enumerate}
\end{corollary}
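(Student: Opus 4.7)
The plan is to reinterpret the given series as a discrete de Leeuw representation in $\opr{\wt{M}}$, identify the Radon measure $\lambda$ on $M$ that induces $m$ via Theorem \ref{th:major_radon_wt}, and then invoke the final assertion of that theorem to produce a new representation of $m$ whose first and second marginals are automatically disjointly supported. Concretely, I would begin by setting
$$
\mu = \sum_{n=1}^\infty a_n\,\delta_{(x_n,y_n)} \in \meas{\wt{M}},
$$
so that $\dual{\Phi}\mu=m$ and $\norm{\mu}=\sum_n a_n=\norm{m}$, placing $\mu\in\opr{\wt{M}}$. The hypotheses \eqref{eq:convex_series_finite_sum} and \eqref{eq:convex_series_finite_first_moment} are precisely the conditions \eqref{eq:weighted finite} and \eqref{eq:marginals_finite_first_moment} for this $\mu$, so Theorem \ref{th:major_radon_wt} applies and delivers a Radon measure $\lambda\in\meas{M}$ with $m=\widehat{\lambda}$.

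Following the construction used in the (ii)$\Rightarrow$(i) direction of Theorem \ref{th:major_radon_wt}, the natural choice is
$$
\lambda = \sum_{n=1}^\infty \frac{a_n}{d(x_n,y_n)}\bigl(\delta_{x_n}-\delta_{y_n}\bigr),
$$
whose total variation is bounded by $2\sum_n a_n/d(x_n,y_n)<\infty$ by \eqref{eq:convex_series_finite_sum}, and which satisfies $\lambda(M)=0$ automatically, so no rebalancing by a multiple of $\delta_0$ is required. This $\lambda$ is purely atomic, with atoms in $X\cup Y$, where $X=\set{x_n:n\in\NN}$ and $Y=\set{y_n:n\in\NN}$. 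The Jordan decomposition $\lambda=\lambda^+-\lambda^-$ then separates $X$ and $Y$ in the following sense: any $p$ with $\lambda(\set{p})>0$ must receive positive contribution from some term, forcing $p\in X$, so the atoms of $\lambda^+$ lie in $X$; analogously, the atoms of $\lambda^-$ lie in $Y$, and the two sets of atoms are of course disjoint.

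Applying the last assertion of Theorem \ref{th:major_radon_wt} then furnishes a representation $\mu'\in\opr{\wt{M}}$ of $m$ with $(\pp_1)_\sharp\mu'\ll\lambda^+$ and $(\pp_2)_\sharp\mu'\ll\lambda^-$; moreover the explicit construction of $\mu'$ as $d(x,y)\,d\pi(x,y)$ for an optimal coupling $\pi$ between $\lambda^+$ and $\lambda^-$ makes it clear that $\mu'$ still satisfies \eqref{eq:weighted finite} and \eqref{eq:marginals_finite_first_moment}. Since $\lambda^\pm$ are purely atomic, so are the pushforwards $(\pp_i)_\sharp\mu'$, and hence $\mu'$ itself is concentrated on a countable subset of $X\times Y\cap\wt{M}$. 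Writing this discrete measure as $\mu'=\sum_n b_n\delta_{(p_n,q_n)}$ yields the desired convex series $m=\sum_n b_n m_{p_nq_n}$ in which properties (b), (c) and (d) are built into the construction, and (a) is just a restatement of the finiteness conditions for $\mu'$ in the discrete setting.

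There is no serious obstacle: once Theorem \ref{th:major_radon_wt} is available, the argument is essentially an unpacking of its final assertion in the special case where the input representation is discrete. The only delicate point worth stating carefully is the choice of the specific Radon measure $\lambda$ inducing $m$, whose Jordan decomposition then carries the required separation property ``for free''.
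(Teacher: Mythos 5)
Your proposal is correct and follows essentially the same route as the paper's proof: form the discrete Radon measure $\lambda=\sum_n a_n d(x_n,y_n)^{-1}(\delta_{x_n}-\delta_{y_n})$ inducing $m$, note that the Jordan decomposition $\lambda^+\leq\nu_1$, $\lambda^-\leq\nu_2$ forces $\lambda^\pm$ to be concentrated on $\{x_n\}$ and $\{y_n\}$ respectively, and then invoke the final assertion of Theorem~\ref{th:major_radon_wt} to get a representation $\mu'$ whose marginals are absolutely continuous with respect to $\lambda^\pm$, hence purely atomic and mutually singular, which makes $\mu'$ discrete and yields (a)--(d). This is the same decomposition and the same key lemma as in the paper; no meaningful difference.
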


Note that conditions \eqref{eq:convex_series_finite_sum} and \eqref{eq:convex_series_finite_first_moment} are automatically satisfied when $M$ is uniformly discrete and radially discrete, respectively. Thus if $M$ is RUD, in particular if it is uniformly discrete and bounded, then every element of $\lipfree{M}$ can be written as a convex series of molecules with non-overlapping first and second coordinates.

\begin{proof}
Condition \eqref{eq:convex_series_finite_sum} implies that the sums
$$
\nu_1 = \sum_{n=1}^\infty \frac{a_n}{d(x_n,y_n)}\,\delta_{x_n} \qquad\text{and}\qquad \nu_2 = \sum_{n=1}^\infty \frac{a_n}{d(x_n,y_n)}\,\delta_{y_n}
$$
are absolutely convergent and describe discrete positive Radon measures on $M$, and condition \eqref{eq:convex_series_finite_first_moment} in turn ensures that these measures have finite first moment. It is clear then that $m$ is induced by the Radon measure $\lambda=\nu_1-\nu_2$.
The Jordan decomposition $\lambda=\lambda^+-\lambda^-$ is thus such that $\lambda^+\leq\nu_1$ is concentrated on the set $\set{x_n}$, and similarly $\lambda^-$ is concentrated on $\set{y_n}$. Let $\mu$ be the representation of $m$ given by Theorem \ref{th:major_radon_wt}. Then $(\pp_1)_\sharp\mu\ll\lambda^+$ and $(\pp_2)_\sharp\mu\ll\lambda^-$, hence both $(\pp_1)_\sharp\mu$ and $(\pp_2)_\sharp\mu$ are concentrated on a countable set, and so the same holds for $\mu$. Thus
$$
\mu = \sum_{n=1}^\infty b_n\delta_{(p_n,q_n)}
$$
for some $(p_n,q_n)\in\wt{M}$, $b_n>0$ and $\sum_nb_n=\norm{m}$.
Then $\set{p_n}$ and $\set{q_n}$ are the smallest sets where $(\pp_1)_\sharp\mu$ and $(\pp_2)_\sharp\mu$ are concentrated, respectively, and properties (b), (c) and (d) follow immediately. Moreover, Theorem \ref{th:major_radon_wt} also states that
$$
\sum_{n=1}^\infty\frac{b_n}{d(p_n,q_n)} = \int_{\wt{M}}\frac{d\mu(x,y)}{d(x,y)} < \infty
$$
and
$$
\sum_{n=1}^\infty b_n\frac{d(p_n,0)}{d(p_n,q_n)} =\int_{\wt{M}}\frac{d(x,0)}{d(x,y)}\,d\mu(x,y) < \infty,
$$
which yields (a).
\end{proof}

As shown in the proof of Corollary \ref{cr:cs_non_overlapping}, the finiteness hypotheses \eqref{eq:convex_series_finite_sum} and \eqref{eq:convex_series_finite_first_moment} imply that $m$ is induced by a discrete Radon measure on $M$. On the other hand, if an element of $\lipfree{M}$ is induced by a Radon measure on $M$, then the inducing measure is unique modulo $\delta_0$ \cite[Proposition 4.9]{AP_measures}. Thus the argument above shows that an element of $\lipfree{M}$ induced by a non-discrete Radon measure on $M$ can never be written as a convex series of molecules where \eqref{eq:convex_series_finite_sum} and \eqref{eq:convex_series_finite_first_moment} hold.

However, if we allow the sums \eqref{eq:convex_series_finite_sum} and \eqref{eq:convex_series_finite_first_moment} to be infinite then it is quite possible for such induced elements to be written as convex series. We finish this section by characterising those elements of $\lipfree{\RR}$ that can be expressed as convex series, and in so doing show that the element induced by Lebesgue measure on $[0,1]$ can be so written. Note that this also furnishes an example of a convex integral of molecules induced by a Radon measure on $M$ that admits an optimal representation in $\opr{\wt{M}}$ failing both finiteness conditions \eqref{eq:weighted finite} and
 \eqref{eq:marginals_finite_first_moment}.

First we recall some elementary facts about integration. By a step function on $\RR$, we mean a finite linear combination of indicator functions of bounded intervals. Let $L_{\mathrm{inc}}$ denote the set of all functions $f:\RR \to \RR$ that are the limits almost everywhere of a series of non-negative step functions. We know from the approach of F. Riesz to Lebesgue integration that each $f \in L_1$ can be written $f=f_1-f_2$, where $f_1,f_2 \in L_{\mathrm{inc}}$ (see e.g. \cite[Exercise 2.12.60]{Bogachev}). Given $(x,y) \in \wt{\RR}$, define the step function
\[
 \phi_{xy} = \frac{\mathbf{1}_{(\min\{x,y\},\max\{x,y\})}}{x-y}. 
\]
Evidently any non-negative step function $\phi$ can be written as $\phi = \sum_{k=1}^n a_k \phi_{x_k y_k}$ almost everywhere, with $y_k < x_k$, $a_k \geq 0$, $k=1,\ldots,n$, and $\norm{\phi}_1 = \sum_{k=1}^n a_k$.
Now recall the standard isometric isomorphism $T:L_1\to \lipfree{\RR}$, given by
\[
 \duality{Tf,g} = \int_{-\infty}^\infty f(t)g'(t)\,dt, \qquad g \in \Lip_0(\RR),
\]
where by absolute continuity of $g$ we have $g' \in L_\infty$ and $\norm{g'}_\infty=\lipnorm{g}$.  Moreover, for $g \in \Lip_0(\RR)$
\[
 \duality{T\phi_{xy},g} = \frac{1}{x-y}\int_{\min\{x,y\}}^{\max\{x,y\}} g'(t) dt = \frac{g(x)-g(y)}{|x-y|} = \duality{m_{xy},g},
\]
so $T\phi_{xy} = m_{xy}$.

\begin{proposition}\label{pr:convex_series_in_F}
 Let $m \in \lipfree{\RR}$. Then $m$ can be written as a convex series of molecules if and only if $(T^{-1}m)^\pm \in L_{\mathrm{inc}}$.
\end{proposition}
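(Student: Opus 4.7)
The plan rests on the isometric isomorphism $T:L_1\to\lipfree{\RR}$ satisfying $T\phi_{xy}=m_{xy}$ for every $(x,y)\in\wt{\RR}$. Because $T$ is isometric, any convex series $m=\sum_n a_n m_{x_n y_n}$ (with $a_n>0$ and $\sum_n a_n=\norm{m}$) corresponds termwise to an absolutely $L_1$-convergent representation $T^{-1}m=\sum_n a_n\phi_{x_n y_n}$ with $\sum_n a_n=\norm{T^{-1}m}_1$, and conversely. The task is then to translate the ``convexity'' ($\sum a_n=\norm{m}$) into the Jordan decomposition of $T^{-1}m$.

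For the forward direction, I would split the index set according to the sign of $x_n-y_n$, letting $I^{\pm}=\set{n:\pm(x_n-y_n)>0}$ and defining
$$
f=\sum_{n\in I^+}a_n\phi_{x_n y_n},\qquad g=\sum_{n\in I^-}a_n\phi_{y_n x_n}.
$$
Both series are absolutely $L_1$-convergent with non-negative step-function terms (using $-\phi_{xy}=\phi_{yx}$), so by monotone convergence their partial sums increase $\mu$-a.e.\ to $f$ and $g$; hence $f,g\in L_{\mathrm{inc}}$ and $T^{-1}m=f-g$. The crucial step is then to observe that
$$
\norm{f}_1+\norm{g}_1=\sum_n a_n=\norm{m}=\norm{T^{-1}m}_1=\norm{f-g}_1,
$$
and since $|f-g|\leq f+g$ a.e.\ with equality precisely when $f\cdot g=0$ a.e., the convexity condition forces $f$ and $g$ to be the positive and negative parts of $T^{-1}m$.

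For the converse, I would write $(T^{-1}m)^+=\sum_k\psi_k^+$ and $(T^{-1}m)^-=\sum_l\psi_l^-$ as the a.e.\ limits of series of non-negative step functions given by membership in $L_{\mathrm{inc}}$. Each such step function, put in canonical form on disjoint intervals, is a finite non-negative combination of atoms $\phi_{xy}$ with $x>y$; collecting and relabeling yields
$$
(T^{-1}m)^+=\sum_k a_k\phi_{p_k q_k},\qquad (T^{-1}m)^-=\sum_l b_l\phi_{u_l v_l}
$$
with $a_k,b_l>0$, $p_k>q_k$, $u_l>v_l$, and $\sum_k a_k=\norm{(T^{-1}m)^+}_1$, $\sum_l b_l=\norm{(T^{-1}m)^-}_1$ (by MCT). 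Applying $T$ and invoking $-m_{u_l v_l}=m_{v_l u_l}$ gives
$$
m=\sum_k a_k m_{p_k q_k}+\sum_l b_l m_{v_l u_l},
$$
and since the positive and negative parts are disjointly supported, $\sum_k a_k+\sum_l b_l=\norm{(T^{-1}m)^+}_1+\norm{(T^{-1}m)^-}_1=\norm{T^{-1}m}_1=\norm{m}$, confirming this is a convex series of molecules.

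The main obstacle is the bookkeeping in the forward direction: absolutely convergent $L_1$-decompositions $T^{-1}m=f-g$ with $f,g\geq 0$ are abundant, but the convexity hypothesis $\sum a_n=\norm{m}$ is exactly what forces the particular decomposition produced by splitting the series to coincide with the Jordan decomposition. Everything else reduces to elementary manipulation of step functions and the monotone convergence theorem.
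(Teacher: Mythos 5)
Your proposal is correct and follows essentially the same route as the paper: split the index set by sign of $x_n-y_n$, use monotone convergence to get $L_{\mathrm{inc}}$ membership, and use the norm identity $\norm{f}_1+\norm{g}_1=\norm{T^{-1}m}_1$ to identify $f,g$ with the Jordan parts (you argue via $|f-g|\le f+g$ pointwise; the paper argues via $0\le\norm{f_1-f^+}_1+\norm{f_2-f^-}_1\le\norm{f_1}_1+\norm{f_2}_1-\norm{|f|}_1=0$, an equivalent elementary step). The converse direction also matches the paper's, assembling a convex series from step-function expansions of $(T^{-1}m)^{\pm}$.
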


\begin{proof}
 Let $m = \sum_{n=1}^\infty a_n m_{x_n y_n}$, where $(x_n,y_n)\in\wt{\RR}$, $a_n \geq 0$ and $\sum_{n=1}^\infty a_n = \norm{m}$. Let $I=\set{n \in \NN \,:\, x_n > y_n}$ and $J=\NN\setminus I$. Define the positive functions
 \[
  f_1 = \sum_{n \in I} a_n \phi_{x_n y_n} \qquad\text{and}\qquad f_2 = \sum_{n \in J} -a_n \phi_{x_n y_n}.
 \]
By the monotone convergence theorem the corresponding series of non-negative step functions converge almost everywhere to $f_1$ and $f_2$, respectively, and thus $f_1,f_2 \in L_{\mathrm{inc}}$, with $\norm{f_1}_1 = \sum_{n \in I} a_n$ and $\norm{f_2}_1 = \sum_{n \in J} a_n$. Moreover
\[
 T(f_1 - f_2) = \sum_{n \in I} a_n m_{x_n y_n} + \sum_{n \in J} a_n m_{x_n y_n} = \sum_{n=1}^\infty a_n m_{x_n y_n} = m,
\]
so $f:= T^{-1}m = f_1-f_2$ and $\norm{f}_1 = \norm{m} = \norm{f_1}_1 + \norm{f_2}_1$. As $f^+ \leq f_1$ and $f^- \leq f_2$, we obtain
\[
 0 \leq \norm{f_1 - f^+}_1+\norm{f_2 - f^-}_1 = \norm{f_1 - f^+ + f_2 - f^-}_1 \leq \norm{f_1}_1+\norm{f_2}_1-\norm{|f|}_1 = 0,
\]
whence $f^+=f_1$ and $f^-=f_2$ almost everywhere, meaning that both functions belong to $L_{\mathrm{inc}}$.

Conversely, let $f:=T^{-1}m$, with $f^+,f^- \in L_{\mathrm{inc}}$. Given that any non-negative step function $\phi$ can be written almost everywhere as a finite linear combination of the $\phi_{xy}$ as above, we can write 
\[
 f^+ = \sum_{n=1}^\infty a_n \phi_{x_n y_n} \qquad\text{and}\qquad f^- = \sum_{n = 1}^\infty a'_n \phi_{x'_n y'_n}
\]
almost everywhere, with $y_n<x_n$, $y'_n<x'_n$, $a_n,a'_n \geq 0$, $\norm{f^+}_1 = \sum_{n=1}^\infty a_n$ and $\norm{f^-}_1 = \sum_{n=1}^\infty a'_n$. Then
\[
 m = T(f^+-f^-) = \sum_{n=1}^\infty a_n m_{x_n y_n} + \sum_{n=1}^\infty a'_n m_{y'_n x'_n},
\]
and $\norm{m} \leq \sum_{n=1}^\infty a_n + \sum_{n=1}^\infty a'_n = \norm{f^+}_1 + \norm{f^-}_1 = \norm{f}_1 = \norm{m}$, so $m$ is a convex series of molecules.
\end{proof}

\begin{example}
\label{ex:lebesgue_measure_is_convex_series}
The element $m \in \lipfree{\RR}$ induced by Lebesgue measure on $[0,1]$ is a convex series of molecules. Indeed, consider $f \in L_1$ given by $f(x)=1-x$, $x \in [0,1]$, and $f(x)=0$ otherwise. Using integration by parts,
\[
  \duality{Tf,g} = \int_0^1 (1-t)g'(t)\,dt = (1-t)g(t)\bigg|_0^1 + \int_0^1 g(t)\,dt = \duality{m,g}
\]
whenever $g \in \Lip_0(\RR)$. Therefore $m = Tf$. We can verify that $f \in L_{\mathrm{inc}}$, with 
\[
 f = \sum_{n=0}^\infty \sum_{k=0}^{2^n-1} \frac{1}{4^{n+1}} \phi_{\frac{2k+1}{2^{n+1}},\frac{2k}{2^{n+1}}},
\]
hence
\[
 m = Tf = \sum_{n=0}^\infty \sum_{k=0}^{2^n-1} \frac{1}{4^{n+1}} m_{\frac{2k+1}{2^{n+1}},\frac{2k}{2^{n+1}}}.
\]
One can also immediately check that both sums \eqref{eq:convex_series_finite_sum} and \eqref{eq:convex_series_finite_first_moment} are infinite for this particular series.
\end{example}

\section{Functionals not expressible as convex integrals or series of molecules}
\label{sec:negative}

We will now show examples of elements of $\lipfree{M}$ that cannot be expressed as a convex integral or series of molecules. An example is given in \cite[Example 3.2]{ARZ} (although it can be traced back to \cite{KMS} at least) of an element $m\in\lipfree{[0,1]}$ that does not admit any expression as a convex series of molecules, and a straightforward modification of the argument therein shows that it also cannot be a convex integral of molecules. This element is constructed around a \emph{fat Cantor set}, i.e. a compact, nowhere dense subset of $\RR$ with positive Lebesgue measure. The canonical example thereof is the well-known Smith-Volterra-Cantor set, built iteratively in a similar way as the middle-thirds Cantor set but with different ratios so that the result has positive measure. With our formulation, a subset of $\RR$ has positive measure if and only if it contains a fat Cantor set.

The underlying idea behind \cite[Example 3.2]{ARZ} is that $m$ is normed by a function $f\in\norming{m}$, a variant of Cantor's staircase, that does not attain its Lipschitz constant between any pair of different points. By Lemma \ref{lm:norming_phi1}, this forces any optimal representation of $m$ to be supported outside of $\wt{M}$. We now extend that example to any metric space containing an isometric copy of a subset of $\RR$ with positive Lebesgue measure.

\begin{theorem}
\label{th:isometric_fat_cantor_example}
If $M$ contains an isometric copy of a subset of $\RR$ with positive Lebesgue measure, then there is a non-zero $m\in\lipfree{M}$ with the property that $\supp(\mu)\cap\wt{M}=\varnothing$ for any optimal de Leeuw representation $\mu$ of $m$.
\end{theorem}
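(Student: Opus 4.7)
The plan generalises [ARZ, Example 3.2] (as hinted in the excerpt): for each $(x_0,y_0)\in\wt{M}$ I will exhibit a norming function $f\in\norming{m}$ with $\Phi f(x_0,y_0)<1$, so that Lemma \ref{lm:norming_phi1} forces $(x_0,y_0)\notin\supp(\mu)$ for every optimal representation $\mu$ of $m$; arbitrariness of $(x_0,y_0)$ then yields $\supp(\mu)\cap\wt{M}=\varnothing$. A single extension will generally not work for all pairs simultaneously, so the norming function will be chosen pair by pair.

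\emph{Setup.} Since any Borel subset of $\RR$ with positive Lebesgue measure contains a compact, nowhere dense subset of positive measure, I may assume $M$ contains an isometric copy of a compact fat Cantor set $K\subset\RR$ with $0\in K$ taken as the base point. Define the Cantor-staircase function $f_0(x)=\int_0^x \mathbf{1}_K(t)\,dt$ on $\RR$. As $K$ is closed and nowhere dense, $[a,b]\setminus K$ is open and dense in $[a,b]$, hence has positive measure, so $f_0(b)-f_0(a)=\lambda(K\cap[a,b])<b-a$ for every $a<b$; thus $f_0\in B_{\Lip_0(\RR)}$ with $\Phi f_0<1$ on $\wt{\RR}$. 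Using the isometry $T:L_1\to\lipfree{\RR}$ from Proposition \ref{pr:convex_series_in_F}, set $m=T(\mathbf{1}_K)$, so $\norm{m}=\lambda(K)>0$ and $\duality{g,m}=\int_K g'(t)\,dt$ for $g\in\Lip_0(\RR)$. A standard density-point argument (any Lipschitz function vanishing on $K$ has derivative zero a.e.\ on $K$) shows that $\duality{g,m}$ depends only on $g\restrict_K$, so $m\in\lipfree{K}\subset\lipfree{M}$ via the canonical isometric embedding. Finally, $\duality{f_0,m}=\lambda(K)=\norm{m}$, so any 1-Lipschitz extension of $f_0\restrict_K$ to $M$ (which automatically sends $0$ to $0$, since $0\in K$) is a norming function for $m$.

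\emph{Tailored extensions.} Fix $(x_0,y_0)\in\wt{M}$ and consider the maximal and minimal McShane extensions
$$
f^+(z)=\inf_{w\in K}(f_0(w)+d(z,w)),\qquad f^-(z)=\sup_{w\in K}(f_0(w)-d(z,w)),
$$
for $z\in M$. The central estimate is $f^-(x_0)-f^+(y_0)<d(x_0,y_0)$. Writing the left side as $\sup_{w,w'\in K}[f_0(w)-f_0(w')-d(x_0,w)-d(y_0,w')]$ and combining $f_0(w)-f_0(w')\leq d(w,w')$ with $d(w,w')\leq d(w,x_0)+d(x_0,y_0)+d(y_0,w')$ gives the upper bound $d(x_0,y_0)$. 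If equality held, sequences $w_n,w'_n\in K$ would attain it in the limit; by compactness of $K$, subsequential limits $w^*,w'^*\in K$ would satisfy $f_0(w^*)-f_0(w'^*)=d(w^*,w'^*)$, whence strict 1-Lipschitz-ness of $f_0$ on $K$ would force $w^*=w'^*$, and then triangle equality would force $d(x_0,y_0)=0$, contradicting $(x_0,y_0)\in\wt{M}$. Given the estimate, the non-empty convex polytope $\{(a,b)\in\RR^2:f^-(x_0)\leq a\leq f^+(x_0),\ f^-(y_0)\leq b\leq f^+(y_0),\ |a-b|\leq d(x_0,y_0)\}$ contains some $(a,b)$ with $a-b<d(x_0,y_0)$, because the infimum of $a-b$ over this set equals $\max\{f^-(x_0)-f^+(y_0),-d(x_0,y_0)\}<d(x_0,y_0)$. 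Setting $f(x_0)=a$, $f(y_0)=b$, $f=f_0$ on $K$, and McShane-extending from $K\cup\{x_0,y_0\}$ to $M$ produces a 1-Lipschitz $f\in\norming{m}$ with $\Phi f(x_0,y_0)<1$, as required.

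\emph{Main obstacle.} The heart of the proof is the extension estimate $f^-(x_0)-f^+(y_0)<d(x_0,y_0)$. No single 1-Lipschitz extension of $f_0\restrict_K$ to $M$ will in general be strictly sub-Lipschitz on all of $\wt{M}$ — for instance, the maximal McShane extension to $[0,2]\supset K$ becomes tent-shaped on gaps of $K$ and attains Lipschitz constant $1$ on some pairs there. The key insight circumventing this is that Lemma \ref{lm:norming_phi1} requires only one norming function per pair, and compactness of $K$ combined with the strict 1-Lipschitz behaviour of $f_0$ on $K$ provides the necessary rigidity to build such a function for any given pair.
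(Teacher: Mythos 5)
Your proof is correct, but it takes a genuinely different route from the paper's, and one of your side-claims (not affecting correctness) is wrong. The paper constructs a \emph{single} norming function that is strictly sub-Lipschitz everywhere: writing $\varphi(x)=\min\{d(x,0),1\}$ (a radial $1$-Lipschitz extension of the isometry from the copy of the fat Cantor set onto a subset of $[0,1]$) and letting $h$ be the Cantor-staircase $h(t)=\lambda([0,t]\cap\varphi(C))$, the function $H=h\circ\varphi\in S_{\Lip_0(M)}$ norms $m$ and satisfies $\Phi H(x,y)<1$ for \emph{every} $(x,y)\in\wt{M}$, since $h(\varphi(x))-h(\varphi(y))<\varphi(x)-\varphi(y)\leq d(x,y)$ whenever $\varphi(x)>\varphi(y)$, and $\Phi H\leq 0$ otherwise. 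Your ``Main obstacle'' paragraph asserts that no single $1$-Lipschitz extension of the Cantor staircase can be strictly sub-Lipschitz on all of $\wt{M}$; this is false, and your counterexample (the McShane extension) only shows that one \emph{particular} extension fails. The paper's trick is precisely to take the composition $h\circ\varphi$ rather than the sup/inf-convolutions, so that the full strictness of $h$ on $\RR$ is inherited.

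Your alternative — for each $(x_0,y_0)\in\wt{M}$, use compactness of $K$ to prove the strict inequality $f^-(x_0)-f^+(y_0)<d(x_0,y_0)$, then pick $(a,b)$ in the polytope of admissible values with $a-b<d(x_0,y_0)$ and McShane-extend — is a valid argument. It is essentially a ``pointwise'' version of the idea, replacing a single global norming function by a family of them tailored to each pair; the convexity of $\norming{m}$ and Lemma~\ref{lm:norming_phi1} then deliver the same conclusion. The paper's argument is cleaner and avoids the polytope/compactness machinery; your argument is heavier but more self-contained in showing that McShane extensions from a fat Cantor set are never forced to attain Lipschitz constant~$1$ between any pair of prescribed points. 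The choice of $m$ is essentially the same in both: your $m=T(\mathbf{1}_K)$ coincides (up to the isometric identification) with the paper's $\delta(p)-\sum_n(\delta(b_n)-\delta(a_n))$, both being the element induced by Lebesgue measure restricted to the fat Cantor set.
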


Let us remark that the extension of the above statement from a fat Cantor set $C$ to an overspace $M$ is not immediately obvious: it is conceivable that there could exist a functional $m\in\lipfree{C}$ that cannot be expressed as a convex integral of molecules in $C$ but can be expressed as a convex integral of molecules in $M$, making use of the additional points in $M$.

\begin{proof}
Denote $I=[0,1]\subset\RR$. Without loss of generality, suppose that there exist a compact set $C\subset M$ containing $0$ and an isometric embedding $\varphi:C\to I$ such that $\varphi(0)=0$, $\varphi(p)=1$ for some $p\in C$, and $\varphi(C)$ is a fat Cantor set.
Then $\varphi(x)=d(x,0)$ for all $x \in C$, and $\varphi$ can be extended to a function in $B_{\Lip_0(M)}$ by setting $\varphi(x)=\min\set{d(x,0),1}$ for all $x\in M$.
Denote $\alpha:=\lambda(\varphi(C))>0$, where $\lambda$ stands for Lebesgue measure.
Let $a_n,b_n\in C$ be such that $I\setminus\varphi(C)$ is the disjoint union of the open intervals $(\varphi(a_n),\varphi(b_n))$ for $n\in\NN$, and define
$$
m := \delta(p) - \sum_{n=1}^\infty (\delta(b_n)-\delta(a_n)) .
$$
The $n$-th term of the series has norm $d(a_n,b_n)=\varphi(b_n)-\varphi(a_n)$ and these norms sum up to ${\lambda(I\setminus\varphi(C))}=1-\alpha$, so the series converges absolutely and $m\in\lipfree{M}$.

Let us start by showing that $\norm{m}=\alpha$. Let $f \in B_{\Lip_0(M)}$. Then $f \circ \varphi^{-1} \in B_{\Lip_0(\varphi(C))}$, so by the McShane extension theorem (see e.g. \cite[Theorem 1.33]{Weaver2}), we can find an extension $g \in B_{\Lip_0(I)}$ of $f \circ \varphi^{-1}$. By absolute continuity of $g$ it follows that
\begin{align*}
 \duality{m,f} &= f(p) - \sum_{n=1}^\infty (f(b_n)-f(a_n))\\
 &= g(\varphi(p)) - \sum_{n=1}^\infty (g(\varphi(b_n))-g(\varphi(a_n)))\\
 &= \int_0^{\varphi(p)} g'\,d\lambda - \sum_{n=1}^\infty \int_{\varphi(a_n)}^{\varphi(b_n)} g'\,d\lambda = \int_{\varphi(C)} g'\,d\lambda \leq \alpha\cdot\norm{g'}_\infty \leq \alpha.
\end{align*}
Therefore $\norm{m} \leq \alpha$.
Next, consider the function $h\in B_{\Lip_0(I)}$ given by $h(t)=\lambda([0,t]\cap\varphi(C))$, and let $H=h\circ\varphi\in B_{\Lip_0(M)}$. Then $H(a_n)=H(b_n)$ for any $n\in\NN$ and
$$
\duality{m,H} = H(p) - \sum_{n=1}^\infty (H(b_n)-H(a_n)) = H(p) = \alpha ,
$$
so $\norm{m}=\alpha$ and $H\in\norming{m}$.

Now fix $\mu\in\opr{\bwt{M}}$ such that $\dual{\Phi}\mu=m$. Let $(x,y) \in \wt{M}$, and consider two cases. If $\varphi(x) \leq \varphi(y)$ then
$$
\Phi H(x,y) = \frac{h(\varphi(x))-h(\varphi(y))}{d(x,y)} \leq 0
$$
because $h$ is an increasing function. On the other hand, if $\varphi(x) > \varphi(y)$ then
$$
\Phi H(x,y) = \frac{h(\varphi(x))-h(\varphi(y))}{d(x,y)} = \frac{\lambda([\varphi(y),\varphi(x)] \cap \varphi(C))}{d(x,y)} < \frac{\varphi(x)-\varphi(y)}{d(x,y)} \leq 1,
$$
because $\varphi(C)$ is nowhere dense and $\varphi$ is $1$-Lipschitz. Thus $\Phi H(x,y) < 1$ whenever $(x,y) \in \wt{M}$. By Lemma \ref{lm:norming_phi1}, we know that $\Phi H(\zeta)=1$ for all $\zeta \in \supp(\mu)$, so this ends the proof.
\end{proof}

\begin{question}
Can ``isometric copy'' be replaced by ``bi-Lipschitz copy'' in the statement of Theorem \ref{th:isometric_fat_cantor_example}?
\end{question}

\begin{remark}\label{rm:no_weak_CIMs}
There is another, non-constructive, method of producing examples of $m \in \lipfree{M}$ satisfying $\supp(\mu) \cap \wt{M}=\varnothing$ for any optimal representation $\mu$ of $m$. Let $\mathrm{LipSNA}_0(M)$ denote the set of $f \in \Lip_0(M)$ that are ``strongly norm-attaining'', meaning that the supremum defining their Lipschitz constant is attained, i.e. $\Phi f(x,y) =\lipnorm{f}$ for some $(x,y) \in \wt{M}$. It is known that there exist metric spaces $M$ for which $\mathrm{LipSNA}_0(M)$ is not dense in $\Lip_0(M)$; see e.g. \cite{Chiclana, CGMRZ, Godefroy_survey, KMS}. On the other hand, by the Bishop-Phelps theorem, the set $\mathrm{LipNA}_0(M)$ of $f \in \Lip_0(M)$ that are norm-attaining when considered as elements of $\lipfree{M}^*$, i.e. $\duality{m,f}=\lipnorm{f}$ for some $m \in S_{\lipfree{M}}$, is dense in $\Lip_0(M)$. Now choose $M$ such that there exists $f \in (\mathrm{LipNA}_0(M) \cap S_{\Lip_0(M)})\setminus\mathrm{LipSNA}_0(M)$, and let $\duality{m,f}=1$ for some $m \in S_{\lipfree{M}}$. Given any optimal representation $\mu$ of $m$, we must have $\supp(\mu) \cap \wt{M}=\varnothing$ by Lemma \ref{lm:norming_phi1} and the fact that $\Phi f(x,y) < \lipnorm{f} = 1$ for all $(x,y) \in \wt{M}$.
\end{remark}

As a second example, we show that there exist functionals that can be expressed as a convex integral of molecules but not as a convex series of molecules, not even failing condition \eqref{eq:convex_series_finite_sum} as in Example \ref{ex:lebesgue_measure_is_convex_series}.
The example will be built in a snowflake space. Recall that given a metric space $(M,d)$ and a real number $\theta\in (0,1)$, the \emph{snowflake} $M^\theta$ is the metric space $(M,d^\theta)$. It is straightforward to check that $d^\theta$ is indeed a metric on $M$.

\begin{proposition}
\label{pr:integral_not_series}
Let $I=[0,1]\subset\RR$ and $\theta\in (0,1)$. Then there is an element of $\lipfree{I^\theta}$ that can be written as a convex integral of molecules but not as a convex series of molecules.
\end{proposition}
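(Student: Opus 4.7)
The candidate is $m = \widehat{\lambda} \in \lipfree{I^\theta}$, where $\lambda$ denotes Lebesgue measure on $[0,1]$ regarded as a Radon measure on the snowflake $I^\theta$. Its first moment $\int_0^1 x^\theta\,dx = 1/(\theta+1)$ is finite, so $m$ lies in $\lipfree{I^\theta}$, and Theorem \ref{th:major_radon_wt} immediately yields that $m$ is a convex integral of molecules. This handles the positive half of the statement.

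For the negative half, the plan is to exploit the snowflake geometry via the canonical norming function $f_0(x) = d(x,0) = x^\theta$. Strict concavity of $t\mapsto t^\theta$ with $\theta \in (0,1)$ and $0^\theta=0$ gives strict subadditivity, i.e. $x^\theta - y^\theta < (x-y)^\theta$ whenever $0<y<x\leq 1$. Consequently, $\Phi f_0(x,y) = (x^\theta - y^\theta)/|x-y|^\theta$ attains the value $1$ on $\wt{I^\theta}$ precisely on the set of pairs of the form $(x,0)$ with $x>0$.

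Now suppose for contradiction that $m = \sum_n a_n m_{x_n y_n}$ with $\sum_n a_n = \norm{m}$. Then $\nu := \sum_n a_n \delta_{(x_n,y_n)} \in \opr{\wt{I^\theta}}$. Since $m$ is positive we have $f_0 \in \norming{m}$, so Lemma \ref{lm:norming_phi1} forces $\Phi f_0(x_n,y_n) = 1$ for every $n$. By the previous paragraph, $y_n = 0$ and $x_n > 0$ for all $n$. Using that $\delta(0)=0$ is the base point, the series collapses to $m = \sum_n (a_n/x_n^\theta)\,\delta(x_n)$, which converges in norm in $\lipfree{I^\theta}$ because $\sum_n a_n = \norm{m} < \infty$.

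It remains to derive a contradiction with the non-atomicity of $\lambda$. The plan is to promote the formal atomic sum $\mu_d := \sum_n (a_n/x_n^\theta)\,\delta_{x_n}$ to an honest locally finite Borel (hence Radon) measure on $(0,1]$: for each compact $K\subset(0,1]$, pick a non-negative Lipschitz cutoff $h\in\Lip_0(I^\theta)$ with $h\geq \mathbf{1}_K$ and $\supp(h)\subset(0,1]$, then $\sum_{x_n\in K}a_n/x_n^\theta \leq \sum_n (a_n/x_n^\theta)h(x_n) = \duality{m,h} = \int h\,d\lambda < \infty$. Then $\int f\,d\mu_d = \duality{m,f} = \int f\,d\lambda$ for every $f\in\Lip_0(I^\theta)$ compactly supported in $(0,1]$; uniform density of such $f$ in $C_c((0,1])$ and the Riesz representation theorem force $\mu_d = \lambda\restrict_{(0,1]}$, which is absurd since $\mu_d$ is purely atomic while $\lambda$ is not. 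The delicate step is this last Radon/Riesz identification; the rest is an essentially immediate consequence of Lemma \ref{lm:norming_phi1} and the strict subadditivity of $t\mapsto t^\theta$.
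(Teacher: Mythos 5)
Your argument agrees with the paper in its choice of candidate (the functional induced by Lebesgue measure on $[0,1]$ viewed in $\lipfree{I^\theta}$), in the use of Theorem~\ref{th:major_radon_wt} for the positive half, and in the key observation for the negative half: strict subadditivity of $t\mapsto t^\theta$ plus the fact that $m$ is normed by $x\mapsto x^\theta$ forces every second coordinate $q_n$ of a putative convex series to be $0$, collapsing the series to $m=\sum_n(a_n/x_n^\theta)\delta(x_n)$. Where you diverge is in extracting the final contradiction: the paper constructs explicit Lipschitz bump functions $f_{c,\varepsilon}$, evaluates $m$ against them, and derives the identity $\sum_{p_n>c}a_n/\omega(p_n)=1-c$ for all $c\in(0,1]$, from which a direct arithmetic argument rules out any atom. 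You instead promote the formal atomic sum to a Radon measure on $(0,1]$ via the local-finiteness estimate, and then invoke the uniqueness part of the Riesz representation theorem (together with uniform density of compactly supported Lipschitz functions in $C_c((0,1])$, and local finiteness to pass limits under the integral) to identify it with $\lambda\restriction_{(0,1]}$, contradicting non-atomicity. Both routes are correct. The paper's is more elementary and self-contained; yours is shorter once Riesz is invoked and makes the structural point — that the discrete series would have to be the non-atomic inducing measure — more transparent. The one place you should be slightly more careful is the passage from ``agreement on compactly supported Lipschitz $f$'' to ``agreement on all of $C_c((0,1])$'': uniform density alone does not suffice for functionals that are a priori unbounded on $C_c$, so you need to observe explicitly that the local finiteness already established allows you to pass uniform limits under both integrals on a fixed compact set before applying Riesz. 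This is a minor wording gap, not a real one.
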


\begin{proof}
We will denote $\omega(t)=t^\theta$, so that the metric on $I^\theta$ is given by $d^\theta(x,y)=\omega(\abs{x-y})$.
Let $m\in\lipfree{I^\theta}$ be defined by
$$
\duality{m,f} = \int_0^1 f(x)\,dx
$$
for $f\in\Lip_0(I^\theta)$, that is, $m$ is the functional induced by (the pushforward of) Lebesgue measure on $[0,1]$. By \cite[Proposition 4.4]{AP_measures} and Theorem \ref{th:major_radon_wt}, $m$ belongs to $\lipfree{I^\theta}$ and can be written as a convex integral of molecules. We will prove that it cannot be written as a convex series of molecules. Assume otherwise, that is
$$
m = \sum_{n=1}^\infty a_nm_{p_nq_n} = \sum_{n=1}^\infty a_n\frac{\delta(p_n)-\delta(q_n)}{\omega(\abs{p_n-q_n})}
$$
where $p_n\neq q_n\in I$, $a_n>0$ and $\sum_n a_n=\norm{m}$. Since $m$ is positive, it is normed by the function $x\mapsto d^{\theta}(x,0)=\omega(x)$, that is, by $\omega$, and therefore $\duality{m_{p_nq_n},\omega}=1$ for all $n$. But
$$
\duality{m_{p_nq_n},\omega} = \frac{\omega(p_n)-\omega(q_n)}{\omega(\abs{p_n-q_n})}
$$
is strictly smaller than $1$ when $p_n\neq q_n$ and $q_n\neq 0$, so we must have $q_n=0$ for all $n$ and we may write $m$ as
$$
m = \sum_{n=1}^\infty a_n\frac{\delta(p_n)}{\omega(p_n)} .
$$

Next, for $c,\varepsilon$ such that $0\leq c<c+\varepsilon\leq 1$, consider the function $f_{c,\varepsilon}$ given by
$$
f_{c,\varepsilon}(x) =
\begin{cases}
0 &\text{if } 0\leq x\leq c, \\
\frac{x-c}{\varepsilon} &\text{if }c\leq x\leq c+\varepsilon, \\
1 &\text{if } c+\varepsilon\leq x\leq 1.
\end{cases}
$$
Given any $0\leq x\neq y\leq 1$ we have
$$
\abs{f_{c,\varepsilon}(x)-f_{c,\varepsilon}(y)} \leq \frac{\abs{x-y}}{\varepsilon} \leq \frac{\omega(\abs{x-y})}{\varepsilon}
$$
and therefore $f_{c,\varepsilon}\in\Lip_0(I^\theta)$ with $\norm{f_{c,\varepsilon}}_{\Lip_0(I^\theta)}\leq 1/\varepsilon$. We then have
$$
1-c-\frac{\varepsilon}{2} = \int_0^1 f_{c,\varepsilon}(x)\,dx = \duality{m,f_{c,\varepsilon}} = \sum_{n=1}^\infty a_n\frac{f_{c,\varepsilon}(p_n)}{\omega(p_n)} = \sum_{p_n>c+\varepsilon}\frac{a_n}{\omega(p_n)} + \sum_{c<p_n\leq c+\varepsilon}\frac{a_nf_{c,\varepsilon}(p_n)}{\omega(p_n)}
$$
and thus
$$
\sum_{p_n>c+\varepsilon}\frac{a_n}{\omega(p_n)} \leq 1-c-\frac{\varepsilon}{2} \leq \sum_{p_n>c}\frac{a_n}{\omega(p_n)} \,.
$$
Letting $\varepsilon\to 0$, it follows that
$$
\sum_{p_n>c}\frac{a_n}{\omega(p_n)} = 1-c
$$
for any $0<c<1$. We observe that this equality holds automatically for $c=1$ also.

Now fix $k \in \NN$ and let $c<p_k$. Then
$$
\sum_{c<p_n\leq p_k}\frac{a_n}{\omega(p_n)} = \sum_{p_n>c}\frac{a_n}{\omega(p_n)} - \sum_{p_n>p_k}\frac{a_n}{\omega(p_n)} = (1-c)-(1-p_k) = p_k - c.
$$
Letting $c\nearrow p_k$ yields
$$
0 = \sum_{p_n=p_k}\frac{a_n}{\omega(p_n)} \geq \frac{a_k}{\omega(p_k)},
$$
which is impossible.
\end{proof}

Note that the proof is valid almost verbatim, more generally, for the metric space $I^\omega = (I,\omega\circ d)$ where $\omega$ is any strictly concave \emph{gauge} or \emph{distortion function} as defined in \cite[Section 2.6]{Weaver2} or \cite[Section 3]{Kalton04}.

\section{Lipschitz extensions and metric alignment}
\label{sec:extensions}

In this section, we briefly digress from the main topic in order to prove a general, standalone result about extensions of real-valued Lipschitz functions and their relationship with alignment in the metric space.
In Section \ref{sec:extreme}, this result will be combined with our previous findings on norm-optimal de Leeuw representations in order to obtain information about the extremal structure of Lipschitz-free spaces.

The setting is as follows. Let $A$ be a subset of $M$, and let $f:A\to\RR$ be a $1$-Lipschitz function. The classical McShane extension theorem ensures that $f$ can be extended to a $1$-Lipschitz function $F:M\to\RR$ such that $F\restrict_A=f$. We wish to investigate to what extent the values of $F$ are constrained by the values of $f$ and, in particular, identify the points of $M$ where the values of $F$ are fixed, and the pairs of points of $M$ where the incremental quotient $\Phi F$ is fixed.

For a given $f\in B_{\Lip(A)}$, it is easy to see that there are in fact a smallest and a largest $1$-Lipschitz extension to $M$, usually called the \emph{sup-convolution} and \emph{inf-convolution} of $f$, respectively, and given by
\begin{align*}
E_A^-f(x) &= \sup_{p\in A}\set{f(p)-d(p,x)} \\
E_A^+f(x) &= \inf_{p\in A}\set{f(p)+d(p,x)}
\end{align*}
for $x\in M$. Any $F\in B_{\Lip(M)}$ such that $F\restrict_A=f$ satisfies $E_A^-f\leq F\leq E_A^+f$ pointwise. Conversely, for any $x\in M$ and any $\alpha$ such that $E_A^-f(x)\leq\alpha\leq E_A^+f(x)$ there is an extension $F\in B_{\Lip(M)}$ of $f$ such that $F(x)=\alpha$; to see this, simply consider the functions $t\cdot E_A^+f + (1-t)\cdot E_A^-f$ for $t\in [0,1]$. In particular, if $E_A^-f(x)=E_A^+f(x)$ for some $x$, then all extensions take the same value at $x$. Let us see exactly when this phenomenon takes place. In the following lemma, we will consider sets 
$$
[p,q]_{\varepsilon} = \set{x\in M \,:\, d(p,x)+d(x,q)<d(p,q)+\varepsilon}
$$
for $p,q\in M$ and $\varepsilon>0$, which are open neighbourhoods of metric segments $[p,q]$.

\begin{lemma}
\label{lm:ext1p}
Let $A\subset M$, $f\in S_{\Lip(A)}$, and $x\in M$. Then the following are equivalent:
\begin{enumerate}[label={\upshape{(\roman*)}}]
\item $E_A^-f(x)=E_A^+f(x)$,
\item all $1$-Lipschitz extensions of $f$ to $M$ take the same value at $x$,
\item $x$ belongs to the set
\begin{equation}
\label{eq:saf_general}
\mathscr{S}(A,f) = \bigcap_{\varepsilon>0} \bigcup \set{ [p,q]_\varepsilon \,:\, p,q\in A\text{ and }f(p)-f(q)>d(p,q)-\varepsilon }.
\end{equation}
\end{enumerate}
If $A$ is compact, then we may write simply
\begin{equation}
\label{eq:saf_compact}
\mathscr{S}(A,f) = \bigcup \set{ [p,q] \,:\, p,q\in A\text{ and }f(p)-f(q)=d(p,q) } .
\end{equation}
\end{lemma}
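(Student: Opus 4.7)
The plan is to handle the three equivalences in turn, with the bulk of the work going into (i)$\Leftrightarrow$(iii).

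For (i)$\Leftrightarrow$(ii), everything is already contained in the discussion preceding the lemma. Any $1$-Lipschitz extension $F$ of $f$ to $M$ is sandwiched as $E_A^-f\leq F\leq E_A^+f$, and conversely, for any $\alpha\in[E_A^-f(x),E_A^+f(x)]$, an appropriate convex combination $tE_A^+f+(1-t)E_A^-f$ with $t\in[0,1]$ is a $1$-Lipschitz extension of $f$ taking the value $\alpha$ at $x$. Hence the interval $[E_A^-f(x),E_A^+f(x)]$ degenerates to a single point precisely when every $1$-Lipschitz extension is forced to take a common value at $x$.

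The main content is (i)$\Leftrightarrow$(iii), which I would prove by unpacking (i) into $\varepsilon$-form and matching it against the definition of $\mathscr{S}(A,f)$. Since $f$ is $1$-Lipschitz, the triangle inequality gives $f(p)-f(q)\leq d(p,q)\leq d(p,x)+d(x,q)$ for all $p,q\in A$, so $E_A^-f(x)\leq E_A^+f(x)$ always holds. Equality is therefore equivalent to the following statement: for every $\varepsilon>0$ there exist $p,q\in A$ with
\[
f(p)-f(q) > d(p,x)+d(q,x)-\varepsilon. \qquad (\ast)
\]
The key observation is that, quantified over all $\varepsilon>0$, condition $(\ast)$ is equivalent to the conjunction of $f(p)-f(q) > d(p,q)-\varepsilon$ and $x\in[p,q]_\varepsilon$. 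Indeed, $(\ast)$ immediately gives $d(p,x)+d(x,q) < f(p)-f(q)+\varepsilon\leq d(p,q)+\varepsilon$ (whence $x\in[p,q]_\varepsilon$) together with $f(p)-f(q) > d(p,x)+d(x,q)-\varepsilon \geq d(p,q)-\varepsilon$; conversely, combining $x\in[p,q]_\varepsilon$ with $f(p)-f(q)>d(p,q)-\varepsilon$ yields $f(p)-f(q) > d(p,x)+d(x,q)-2\varepsilon$. Since these translations only cost a factor of $2$ in $\varepsilon$, the equivalence after quantification over all $\varepsilon>0$ is exactly $x\in\mathscr{S}(A,f)$ in the form \eqref{eq:saf_general}.

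For the compact case, the inclusion of the set in \eqref{eq:saf_compact} into the set in \eqref{eq:saf_general} is immediate: if $f(p)-f(q)=d(p,q)$ and $x\in[p,q]$, then $x\in[p,q]_\varepsilon$ and $f(p)-f(q)>d(p,q)-\varepsilon$ hold trivially for every $\varepsilon>0$. For the reverse inclusion, given $x\in\mathscr{S}(A,f)$, I would pick, for each $n\in\NN$, points $p_n,q_n\in A$ with $x\in[p_n,q_n]_{1/n}$ and $f(p_n)-f(q_n)>d(p_n,q_n)-1/n$, then invoke sequential compactness of $A$ to extract subsequences $p_n\to p$ and $q_n\to q$ in $A$. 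Continuity of $f$ and of $d$ yields in the limit $f(p)-f(q)\geq d(p,q)$ and $d(p,x)+d(x,q)\leq d(p,q)$, which by $1$-Lipschitzness and the triangle inequality must both be equalities, giving $f(p)-f(q)=d(p,q)$ and $x\in[p,q]$. The only step requiring any care is the $\varepsilon$-bookkeeping in the previous paragraph; the compactness argument itself is routine.
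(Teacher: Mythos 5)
Your proposal is correct and follows essentially the same route as the paper: (i)$\Leftrightarrow$(ii) is quoted from the preceding discussion, (i)$\Leftrightarrow$(iii) is the same $\varepsilon$-chase (the paper's version of $(\ast)$ appears explicitly in its proof of (i)$\Rightarrow$(iii), with the same factor-of-$2$ slack appearing in (iii)$\Rightarrow$(i)), and the compactness step is the identical subsequence-and-limit argument.
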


Note that the set $\mathscr{S}(A,f)$ contains $A$ and is always closed e.g. by condition (i).

\begin{proof}
First, we prove the equivalences in the general case.

(i)$\Leftrightarrow$(ii) has been established in the previous discussion.

(i)$\Rightarrow$(iii): Suppose that $E_A^-f(x)=E_A^+f(x)$, and fix $\varepsilon>0$. Then there exist $p,q\in A$ such that
$$
f(p)-d(p,x)+\tfrac{1}{2}\varepsilon > E_A^-f(x) = E_A^+f(x) > f(q)+d(q,x)-\tfrac{1}{2}\varepsilon
$$
so
$$
d(p,q) \leq d(p,x)+d(q,x) < f(p)-f(q)+\varepsilon \leq d(p,q)+\varepsilon .
$$
This shows that $x$ is contained in \eqref{eq:saf_general}.

(iii)$\Rightarrow$(i): Suppose that $x$ belongs to \eqref{eq:saf_general}, and fix $\varepsilon>0$. Then there exist $p,q\in A$ such that
$$
f(p)-f(q) > d(p,q)-\varepsilon > d(p,x)+d(q,x)-2\varepsilon ,
$$
that is,
$$
E_A^-f(x) \geq f(p)-d(p,x) > f(q)+d(q,x)-2\varepsilon \geq E_A^+f(x) - 2\varepsilon .
$$
Letting $\varepsilon\to 0$ we obtain (i).

Finally, we establish the identity \eqref{eq:saf_compact} for compact $A$. Notice that the right-hand side of \eqref{eq:saf_compact} is always contained in $\mathscr{S}(A,f)$. On the other hand, for every $x\in\mathscr{S}(A,f)$ and $n\in\NN$ we may choose $p_n,q_n\in A$ with $x\in [p_n,q_n]_{1/n}$ and $f(p_n)-f(q_n)>d(p_n,q_n)-\frac{1}{n}$. If $A$ is compact then we may extract a subsequence such that $p_n\to p\in A$ and $q_n\to q\in A$, and then we have $x\in [p,q]$ and $f(p)-f(q)=d(p,q)$, that is, $x$ belongs to the right-hand side of \eqref{eq:saf_compact}.
\end{proof}

Going further, let us see when the values of $f$ at $A$ are enough to constrain any extension $F$ to having a fixed incremental quotient $\Phi F(x,y)$ between two points $x,y\in M$ which do not necessarily belong to $A$. This is obviously the case when $F(x)$ and $F(y)$ are individually constrained to fixed values. As it turns out, that is the only situation where this can happen.

\begin{lemma}
\label{lm:ext2p}
Let $A\subset M$, $f\in S_{\Lip(A)}$, and $x\neq y\in M$. Suppose that $F(x)-F(y)$ takes the same value for all $F\in S_{\Lip(M)}$ such that $F\restrict_A=f$. Then $x,y\in\mathscr{S}(A,f)$.
\end{lemma}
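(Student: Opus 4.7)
The plan is to argue by contradiction: assume $x \notin \mathscr{S}(A,f)$ and exhibit a degree of freedom in the extensions of $f$ that $F(y)$ cannot track in lock-step with $F(x)$. The key move is to use the value of $F(x)$ as a tunable parameter ranging over the whole interval $[E_A^- f(x), E_A^+ f(x)]$, and then check that $F(y)$ cannot always equal this parameter minus a constant $c$.

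Concretely, set $\alpha := E_A^- f(x)$, $\beta := E_A^+ f(x)$, $a := E_A^- f(y)$, $b := E_A^+ f(y)$, and let $c$ denote the putative constant value of $F(x) - F(y)$. By Lemma \ref{lm:ext1p}, $\alpha < \beta$. For every $\gamma \in [\alpha, \beta]$ the extension $\tilde f_\gamma$ of $f$ to $A \cup \{x\}$ defined by $\tilde f_\gamma(x) = \gamma$ is $1$-Lipschitz, and hence extends further to $M$ by McShane's theorem. Every such full extension restricts to $f$ on $A$, so the hypothesis forces $F(y) = \gamma - c$, meaning that all $1$-Lipschitz extensions of $\tilde f_\gamma$ to $M$ take the value $\gamma - c$ at $y$. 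Applying Lemma \ref{lm:ext1p} to $(A \cup \{x\}, \tilde f_\gamma)$ and computing the relevant sup- and inf-convolutions then yields
\[
 \max\{a, \gamma - d(x,y)\} \;=\; \min\{b, \gamma + d(x,y)\} \;=\; \gamma - c \qquad \text{for every } \gamma \in [\alpha, \beta].
\]

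The heart of the argument is a short piecewise-linear analysis of this identity. The left-hand side is constant ($=a$) for $\gamma \leq a + d(x,y)$ and equal to $\gamma - d(x,y)$ for $\gamma \geq a + d(x,y)$; to agree with the slope-$1$ function $\gamma - c$ on the nondegenerate interval $[\alpha,\beta]$, most of this interval must lie in the slope-$1$ regime, which forces $c = d(x,y)$. Symmetrically, the $\min$ equation forces $c = -d(x,y)$, so $d(x,y) = 0$, contradicting $x \neq y$. Hence $x \in \mathscr{S}(A,f)$, and swapping the roles of $x$ and $y$ (so that $c$ is replaced by $-c$) gives $y \in \mathscr{S}(A,f)$ in the same way. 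I don't anticipate a serious obstacle; the subtle point is really just recognizing the right reduction, namely that fixing $\tilde f_\gamma(x) = \gamma$ and invoking Lemma \ref{lm:ext1p} on the enlarged set $A \cup \{x\}$ converts the hypothesis into an explicit constraint whose failure across an interval of values of $\gamma$ is immediate.
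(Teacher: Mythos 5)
Your proof is correct and uses the same core idea as the paper's: augment $A$ to $A\cup\{x\}$ with a tunable value at $x$, compute the sup- and inf-convolutions, and use Lemma~\ref{lm:ext1p} (via (ii)$\Rightarrow$(i)) to turn the hypothesis into the identity $\max\{E_A^-f(y),\gamma-d(x,y)\}=\min\{E_A^+f(y),\gamma+d(x,y)\}$. Where you differ is in how the contradiction is extracted. The paper fixes a single interior value $\alpha\in(E_A^-f(x),E_A^+f(x))$ and uses the assumption $E_A^-f(y)<E_A^+f(y)$ to reduce to one of two boundary equalities, each of which violates the choice of $\alpha$ via the $1$-Lipschitzness of the convolutions; to make that assumption available, it first disposes separately of the case where exactly one of $x,y$ lies in $\mathscr{S}(A,f)$. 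Your version instead lets $\gamma$ run over the whole interval $[E_A^-f(x),E_A^+f(x)]$ and reads off the contradiction $d(x,y)=c=-d(x,y)$ from matching slopes of piecewise-linear functions, which absorbs the case $y\in\mathscr{S}(A,f)$ automatically (the constant regime of the $\max$ or $\min$ then covers a nondegenerate subinterval, which cannot match a slope-$1$ function). The two arguments are of comparable length; yours treats the cases uniformly, while the paper's avoids introducing the constant $c$ and gets the contradiction from a single $\alpha$.
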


\begin{proof}
If exactly one of the points $x,y$, say $x$, belongs to $\mathscr{S}(A,f)$, then $F(x)-F(y)$ can take at least the two different values $E_A^+f(x)-E_A^+f(y)$ and $E_A^-f(x)-E_A^-f(y)=E_A^+f(x)-E_A^-f(y)$. So assume for a contradiction that neither of them belong to $\mathscr{S}(A,f)$ but $F(x)-F(y)$ takes the same value $\beta\in\RR$ for all $1$-Lipschitz extensions $F$. Fix $\alpha$ with $E_A^-f(x)<\alpha<E_A^+f(x)$. Then the function $h:A\cup\set{x}\to\RR$ defined by $h\restrict_A=f$ and $h(x)=\alpha$ is $1$-Lipschitz, therefore it admits $1$-Lipschitz extensions to $A\cup\set{x,y}$. The smallest and largest such extensions are given by
\begin{align*}
E_{A\cup\set{x}}^-h(y) &= \sup_{p\in A\cup\set{x}}\set{h(p)-d(p,y)} = \max\set{E_A^-f(y),\alpha-d(x,y)} \\
E_{A\cup\set{x}}^+h(y) &= \inf_{p\in A\cup\set{x}}\set{h(p)+d(p,y)} = \min\set{E_A^+f(y),\alpha+d(x,y)} ,
\end{align*}
and both of them are extensions of $f$ as well. By assumption
$$
E_{A\cup\set{x}}^-h(x) - E_{A\cup\set{x}}^-h(y) = \beta = E_{A\cup\set{x}}^+h(x) - E_{A\cup\set{x}}^+h(y)
$$
and therefore $E_{A\cup\set{x}}^-h(y)=E_{A\cup\set{x}}^+h(y)$. But we are also assuming $E_A^-f(y)<E_A^+f(y)$, so this implies that at least one of these two conditions must hold:
\begin{itemize}
\item $E_A^+f(y) = \alpha-d(x,y)$
\item $E_A^-f(y) = \alpha+d(x,y)$
\end{itemize}
In the former case, we get $\alpha = E_A^+f(y)+d(x,y) \geq E_A^+f(x)$, whereas in the latter case we get $\alpha\leq E_A^-f(x)$ similarly. Both conclusions contradict our choice of $\alpha$.
\end{proof}

In particular, if $A$ is compact then the incremental quotients of Lipschitz extensions from $A$ to $M$ can only be fixed between points whose coordinates are metrically aligned with points of $A$, due to \eqref{eq:saf_compact}.

We will now focus on the case where the fixed incremental quotient is the maximum possible value $\lipnorm{f}=1$. Let us consider a slightly more general setting, in which we are not restricted to pairs of points $(x,y)$ in $\wt{M}$ but we also consider ``generalised incremental quotients'', that is, evaluations of $\Phi F$ on points of $\bwt{M}$. We fix the notation
\begin{equation}
\label{eq:daf}
\mathscr{D}(A,f) = \set{\zeta\in\bwt{M} \,:\, \text{$\Phi F(\zeta)=1$ for every $
F\in S_{\Lip(M)}$ such that $F\restrict_A=f$} } . 
\end{equation}
Note that this is always a compact subset of $\bwt{M}$, and the situation described in Lemma \ref{lm:ext2p} corresponds to elements of $\mathscr{D}(A,f)$ both of whose coordinates belong to $M$ and are different, i.e. elements of $\mathscr{D}(A,f)\cap\wt{M}$. We will now extend Lemma \ref{lm:ext2p} to cover the situation where both coordinates are equal and belong to $M$, i.e. for elements of $\mathscr{D}(A,f)$ at the ``diagonal'' of $\bwt{M}$. In order to study that case, we first need to extend the lemma from pairs of points to pairs of regions, but this requires a stronger assumption of compactness.

\begin{lemma}
\label{lm:extdiag_claim}
Let $A$ be a compact subset of $M$ and $f\in S_{\Lip(A)}$. Suppose that $p\in M\setminus\mathscr{S}(A,f)$. Then there is $F\in S_{\Lip(M)}$ with $F\restrict_A=f$, and $\eta>0$ and a neighbourhood $U$ of $p$ such that $\abs{\Phi F(x,y)}\leq 1-\eta$ for all $x\in A$ and $y\in U$.
\end{lemma}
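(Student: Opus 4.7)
The plan is to produce $F$ as the midpoint of the two extremal $1$-Lipschitz extensions of $f$ and to convert the pointwise gap at $p$ into a uniform bound on a bounded neighborhood.

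Since $p\notin\mathscr{S}(A,f)$, Lemma~\ref{lm:ext1p} gives $\delta:=E_A^+f(p)-E_A^-f(p)>0$. I would set
\[
F:=\tfrac12\bigl(E_A^+f+E_A^-f\bigr),
\]
which is a $1$-Lipschitz extension of $f$ as a convex combination of two such extensions. Since $A\subset\mathscr{S}(A,f)$ we have $p\notin A$, and since $A$ is compact this gives $d(p,A)>0$. By continuity of the gap function $y\mapsto E_A^+f(y)-E_A^-f(y)$, I would pick a bounded open neighborhood $U$ of $p$ with $U\cap A=\varnothing$ on which $E_A^+f-E_A^-f\geq\delta/2$, and then set $D:=\sup\set{d(x,y)\,:\,x\in A,\,y\in U}$, which is finite because $A$ is bounded and $U$ is bounded.

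For the estimate, fix $x\in A$ and $y\in U$ (automatically $x\neq y$). By construction
\[
F(y)-E_A^-f(y)=E_A^+f(y)-F(y)=\tfrac12\bigl(E_A^+f(y)-E_A^-f(y)\bigr)\geq\delta/4,
\]
so combining with the defining inequalities $f(x)-d(x,y)\leq E_A^-f(y)$ and $E_A^+f(y)\leq f(x)+d(x,y)$ I obtain
\[
f(x)-F(y)\leq d(x,y)-\delta/4\qquad\text{and}\qquad F(y)-f(x)\leq d(x,y)-\delta/4.
\]
Using $F(x)=f(x)$ this gives $\abs{F(x)-F(y)}\leq d(x,y)-\delta/4$, whence
\[
\abs{\Phi F(x,y)}\leq 1-\frac{\delta}{4d(x,y)}\leq 1-\frac{\delta}{4D}=:1-\eta.
\]

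I do not anticipate any serious obstacle here; the subtlety worth flagging is twofold. First, one must take the midpoint of $E_A^+f$ and $E_A^-f$ rather than either extremal extension, because that is what leaves strictly positive room on \emph{both} sides so that $\abs{\Phi F}$, not just one of $\pm\Phi F$, can be controlled. Second, the bound on $\abs{\Phi F(x,y)}$ is $1-\delta/(4d(x,y))$, which degenerates as $d(x,y)\to\infty$; here compactness of $A$ together with the bounded choice of $U$ is essential to turn the pointwise gap $\delta$ into a uniform $\eta>0$.
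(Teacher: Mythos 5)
Your proof is correct, and it takes a genuinely different and more elementary route than the paper's. The paper proceeds via Lemma~\ref{lm:ext2p}: for each fixed $x\in A$ it deduces $(x,p)\notin\mathscr{D}(A,f)$ and $(p,x)\notin\mathscr{D}(A,f)$, averages two extensions to obtain a single $F_x$ with $\abs{\Phi F_x(x,p)}<1$, invokes continuity to get neighbourhoods $V_x$ of $x$ and $U_x$ of $p$ and a margin $\eta_x$, and then uses compactness of $A$ to extract a finite subcover, finally averaging the corresponding finitely many $F_x$. You bypass Lemma~\ref{lm:ext2p} and the covering argument entirely: you take the explicit midpoint $F=\tfrac12(E_A^+f+E_A^-f)$, use Lemma~\ref{lm:ext1p} only to get $\delta=E_A^+f(p)-E_A^-f(p)>0$, and then convert this pointwise gap into a uniform margin by the Lipschitz continuity of $E_A^+f-E_A^-f$ (giving a lower bound $\delta/2$ on a neighbourhood $U$) together with the finiteness of $D=\sup\{d(x,y):x\in A,\,y\in U\}$ (from boundedness of $A$ and $U$). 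The defining inequalities $f(x)-d(x,y)\le E_A^-f(y)$ and $E_A^+f(y)\le f(x)+d(x,y)$ for $x\in A$ then yield $\abs{F(x)-F(y)}\le d(x,y)-\delta/4$ directly, and hence the uniform bound $\abs{\Phi F(x,y)}\le 1-\delta/(4D)$. Your argument is shorter, fully explicit about $F$ and $\eta$, and dispenses with the double compactness step in the paper; the paper's version has the advantage of slotting more uniformly into the chain of lemmas (\ref{lm:ext1p} $\to$ \ref{lm:ext2p} $\to$ \ref{lm:extdiag_claim} $\to$ \ref{lm:extdiag}) and of not committing to a specific extension. Both uses of compactness of $A$ --- yours to bound $D$ and guarantee $d(p,A)>0$, the paper's to extract a finite subcover --- are genuinely necessary in their respective arguments.
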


\begin{proof}
Fix $x\in A$ for now. By Lemma \ref{lm:ext2p} we have $(x,p)\notin\mathscr{D}(A,f)$, so there exists an extension $F_x^1\in S_{\Lip(M)}$ of $f$ to $M$ such that $\Phi F_x^1(x,p)<1$. Similarly $(p,x)\notin\mathscr{D}(A,f)$ so there is an extension $F_x^2\in S_{\Lip(M)}$ with $\Phi F_x^2(p,x)<1$, i.e. $\Phi F_x^2(x,p)>-1$. Thus $F_x=\frac{1}{2}(F_x^1+F_x^2)$ is a $1$-Lipschitz extension of $f$ such that $\abs{\Phi F_x(x,p)}<1$. By continuity, there are $\eta_x>0$ and neighbourhoods $U_x$ of $p$ and $V_x$ of $x$ such that $\abs{\Phi F_x(x',y')}\leq 1-\eta_x$ whenever $x'\in V_x$ and $y'\in U_x$.

Since $A$ is compact, we may find a finite set $E\subset A$ such that $A\subset\bigcup_{x\in E}V_x$. Let
$$
F=\frac{1}{\abs{E}}\sum_{x\in E}F_x
$$
and $U=\bigcap_{x\in E}U_x$. Then $F\in S_{\Lip(M)}$ is also a $1$-Lipschitz extension of $f$ and $U$ is a neighbourhood of $p$. For any $x\in A$ we may find $x'\in E$ such that $x\in V_{x'}$ and so $\abs{\Phi F_{x'}(x,y)}\leq 1-\eta_{x'}$ for any $y\in U$. Thus the required conditions are satisfied with $\eta=\frac{1}{\abs{E}}\min\set{\eta_x\,:\,x\in E}$.
\end{proof}

With this technical result at hand, we may extend Lemma \ref{lm:ext2p} to the diagonal:

\begin{lemma}
\label{lm:extdiag}
Let $A$ be a compact subset of $M$ and $f\in S_{\Lip(A)}$. If $p\in M$ and $\zeta\in\mathscr{D}(A,f)$ are such that $\pp(\zeta)=(p,p)$, then $p\in\mathscr{S}(A,f)$.
\end{lemma}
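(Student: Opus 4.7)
The plan is to prove the contrapositive: assuming $p \notin \mathscr{S}(A,f)$, I will construct an extension $F_0 \in S_{\Lip(M)}$ of $f$ that is \emph{locally constant} around $p$, from which $\Phi F_0(\zeta) = 0 \neq 1$ follows by continuity, hence $\zeta \notin \mathscr{D}(A,f)$. The tempting alternative of applying Lemma \ref{lm:extdiag_claim} directly to the given $\zeta$ fails, because that lemma only controls $|\Phi F(x,y)|$ when $x \in A$, yet a net $(x_\alpha,y_\alpha) \to \zeta$ satisfies $x_\alpha,y_\alpha \to p$ in $M$ and will generally lie off $A$.

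For the construction, Lemma \ref{lm:ext1p} gives $E_A^- f(p) < E_A^+ f(p)$; set $\beta = \tfrac12(E_A^-f(p)+E_A^+f(p))$ and $\delta = \tfrac12(E_A^+f(p)-E_A^-f(p))>0$. The function $f'$ on $A\cup\set{p}$ defined by $f'|_A=f$ and $f'(p)=\beta$ is $1$-Lipschitz, so its sup- and inf-convolutions over $A\cup\set{p}$,
\[ F(x)=\max\set{E_A^-f(x),\,\beta-d(p,x)}, \qquad F'(x)=\min\set{E_A^+f(x),\,\beta+d(p,x)}, \]
both belong to $S_{\Lip(M)}$ and restrict to $f$ on $A$. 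The key computation is that on the ball $B(p,\delta/2)=\set{x\in M:d(p,x)<\delta/2}$ one has $E_A^-f(x) \leq E_A^-f(p)+d(p,x) = \beta-\delta+d(p,x) \leq \beta-d(p,x)$, and symmetrically $E_A^+f(x)\geq \beta+d(p,x)$; hence $F(x)=\beta-d(p,x)$ and $F'(x)=\beta+d(p,x)$ there. Therefore $F_0 = \tfrac12(F+F')$ is a $1$-Lipschitz extension of $f$ that equals the constant $\beta$ on $B(p,\delta/2)$, so $\Phi F_0(x,y)=0$ for every $(x,y)\in\wt{M}$ with $x,y\in B(p,\delta/2)$.

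To finish, I pick any net $(x_\alpha,y_\alpha)\in\wt{M}$ with $(x_\alpha,y_\alpha)\to\zeta$ in $\bwt{M}$. Continuity of $\pp_i$ yields $x_\alpha\to p$ and $y_\alpha\to p$ in $\beta M$, and since $p\in M$ this is genuine convergence in $M$; so eventually $x_\alpha,y_\alpha\in B(p,\delta/2)$, with $x_\alpha\neq y_\alpha$ throughout. Then $\Phi F_0(x_\alpha,y_\alpha)=0$ eventually, and by continuity of $\Phi F_0$ on $\bwt{M}$ we obtain $\Phi F_0(\zeta)=0$, contradicting $\zeta\in\mathscr{D}(A,f)$.

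The main obstacle is conceptual: seeing that Lemma \ref{lm:extdiag_claim} does not suffice here, and replacing it with a direct construction that flattens the extension \emph{inside} a ball around $p$ by averaging the two extremal extensions obtained after first fixing the value $\beta$ at $p$. The verification that $\delta/2$ is exactly the radius on which both convolutions realise their ``free'' affine branches---so that the average cancels the dependence on $d(p,x)$---is short but essential.
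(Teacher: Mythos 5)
Your proof is correct, but it takes a genuinely different (and arguably cleaner) route than the paper. The paper applies Lemma~\ref{lm:extdiag_claim}, which uses a finite-cover argument over the compact set $A$ to produce one extension $g$ with $|\Phi g(x,y)|\leq 1-\eta$ for all $x\in A$, $y\in U$; it then multiplies $g$ by $1-\delta$ on $U$ and checks carefully (using that estimate plus the positivity of $d(A,U)$) that the glued function is still $1$-Lipschitz, so the resulting extension is $(1-\delta)$-Lipschitz near $p$ and $\Phi h(\zeta)\leq 1-\delta<1$. You instead fix the midpoint value $\beta$ at $p$ and average the sup- and inf-convolutions of the extended function on $A\cup\{p\}$; the affine branches $\beta\mp d(p,\cdot)$ dominate on $B(p,\delta/2)$ and cancel exactly, yielding an extension that is \emph{constant} on that ball, whence $\Phi F_0(\zeta)=0$. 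Your argument bypasses Lemma~\ref{lm:extdiag_claim} entirely, is shorter, and in fact does not use the compactness of $A$ anywhere (only Lemma~\ref{lm:ext1p}, which holds for arbitrary $A$), so it actually establishes the statement under a weaker hypothesis than the paper does. Your heuristic comment that Lemma~\ref{lm:extdiag_claim} cannot be applied directly to $\zeta$ is also accurate: the bound it gives is only for pairs in $A\times U$, whereas nets converging to $\zeta$ have both coordinates converging to $p\notin A$; the paper of course also recognises this and uses the lemma only indirectly, to verify that the scaled-down glue is $1$-Lipschitz.
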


\begin{proof}
Let $p\in M$ and let $\zeta\in\mathscr{D}(A,f)$ satisfy $\pp(\zeta)=(p,p)$. 
Suppose that $p\notin\mathscr{S}(A,f)$, in order to derive a contradiction. Apply Lemma \ref{lm:extdiag_claim} to get a $1$-Lipschitz extension $g$ of $f$, a neighbourhood $U$ of $p$, and $\eta>0$ such that $\abs{\Phi g(x,y)}\leq 1-\eta$ for all $x\in A$ and $y\in U$.
We may assume that $U$ is bounded and $r:=d(A,U)$ is positive. Now fix $\delta\in(0,1)$ such that $\delta\norm{g\restrict_U}_\infty\leq\eta r$, and define a function $h$ on $A\cup U$ by
$$
h(x) = \begin{cases}
g(x) &\text{if $x\in A$,} \\
(1-\delta)g(x) &\text{if $x\in U$.}
\end{cases}
$$
We claim that $h$ is $1$-Lipschitz. Indeed, it is clear that $\abs{h(x)-h(y)}\leq d(x,y)$ if $x,y$ both belong to $A$ or to $U$, and if $x\in A$ and $y\in U$ then
$$
\abs{h(x)-h(y)} \leq \abs{g(x)-g(y)}+\delta\abs{g(y)} \leq (1-\eta)d(x,y) + \eta r \leq d(x,y)
$$
as well. Thus $h$ can be extended to a $1$-Lipschitz extension of $f$. Since $h$ is $(1-\delta)$-Lipschitz in a neighbourhood of $p$, we get that $\abs{\Phi h(\zeta)}\leq 1-\delta$ and so $\zeta\notin\mathscr{D}(A,f)$. This contradiction ends the proof.
\end{proof}

Finally, we prove a version of Lemma \ref{lm:ext2p} for elements of $\mathscr{D}(A,f)$ having only one coordinate in $M$ and the other in $\beta M\setminus M$.

\begin{lemma}
\label{lm:ext_mixed_coordinates}  
Let $A$ be a compact subset of $M$ and $f\in S_{\Lip(A)}$. If $p\in M$ and $\zeta\in\mathscr{D}(A,f)$ satisfy $\pp_s(\zeta)=\{p,\xi\}$ for some $\xi\in\beta M\setminus M$, then 
$p\in\mathscr{S}(A,f).$
\end{lemma}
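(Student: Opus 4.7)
The plan is to argue by contradiction. Suppose $p\notin\mathscr{S}(A,f)$ and fix a net $(x_\beta,y_\beta)\in\wt{M}$ with $(x_\beta,y_\beta)\to\zeta$ in $\bwt{M}$. Without loss of generality assume $x_\beta\to p$ and $y_\beta\to\xi$ in $\beta M$; the reversed orientation will be handled symmetrically at the end. Let $r:=d(p,\xi)$ denote the value of the continuous extension of $d(p,\cdot)$ at $\xi$; since $p\in M$ while $\xi\in\beta M\setminus M$ are distinct points of $\beta M$, we have $r\in(0,\infty]$, and $d(x_\beta,y_\beta)\to r$ by the triangle inequality.

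If $r=\infty$, truncate any 1-Lipschitz extension $F$ of $f$ to $F^R:=\min\{F(p)+R,\max\{F(p)-R,F\}\}$ with $R:=\max_{a\in A}d(a,p)$, finite by compactness of $A$. Since $|F(a)-F(p)|\leq d(a,p)\leq R$ for each $a\in A$, $F^R$ still extends $f$ and is 1-Lipschitz; as $|F^R(x_\beta)-F^R(y_\beta)|\leq 2R$ while $d(x_\beta,y_\beta)\to\infty$, we get $\Phi F^R(\zeta)=0\neq 1$, contradicting $\zeta\in\mathscr{D}(A,f)$.

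The substantive case is $r\in(0,\infty)$, so that $\xi\in\rcomp{M}$ and every 1-Lipschitz function on $M$ admits a finite continuous extension at $\xi$. By Lemma \ref{lm:ext1p}, $\alpha_1:=E_A^-f(p)<E_A^+f(p)=:\alpha_2$. The key idea is to parameterize extensions of $f$ by their value at $p$: for each $\alpha\in(\alpha_1,\alpha_2)$ set $B:=A\cup\{p\}$ and define $h_\alpha:B\to\RR$ by $h_\alpha\restrict_A=f$ and $h_\alpha(p)=\alpha$, which is 1-Lipschitz on $B$ by choice of $\alpha$. Any 1-Lipschitz extension $F$ of $h_\alpha$ to $M$ is a 1-Lipschitz extension of $f$ with $F(p)=\alpha$, so by hypothesis $\Phi F(\zeta)=1$. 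Combined with $F(x_\beta)\to\alpha$ and $d(x_\beta,y_\beta)\to r$, this forces $\lim_\beta F(y_\beta)=\alpha-r$, and this common limit coincides with the continuous extension $F(\xi)$.

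Specializing to $F:=E_B^+h_\alpha=\min\{\alpha+d(p,\cdot),\,E_A^+f\}$ and writing $L:=E_A^+f(\xi)\in\RR$ (finite since $E_A^+f$ is 1-Lipschitz and $\xi\in\rcomp{M}$), passage to the limit in each coordinate of the min gives $\min\{\alpha+r,\,L\}=\alpha-r$; as $\alpha+r>\alpha-r$, this forces $L=\alpha-r$. But $L$ does not depend on $\alpha$, so varying $\alpha$ in the open interval $(\alpha_1,\alpha_2)$ produces the contradiction. The reversed orientation $x_\beta\to\xi$, $y_\beta\to p$ is treated identically via $E_B^-h_\alpha=\max\{\alpha-d(p,\cdot),\,E_A^-f\}$, yielding $E_A^-f(\xi)=\alpha+r$ for all such $\alpha$. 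The only delicate point is this second case: Lemma \ref{lm:extdiag_claim} does not apply because only one coordinate of $\zeta$ is near $p$, and the $\alpha$-parameterization together with the explicit formulas for sup- and inf-convolutions over $B$ is what bridges the gap.
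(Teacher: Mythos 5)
Your proof is correct, and it takes a genuinely different route from the paper's. Both arguments begin by establishing $r:=d(p,\xi)<\infty$ and both exploit the gap $E_A^-f(p)<E_A^+f(p)$, but the contradictions reached are structured differently. The paper constructs a single perturbed extension $h$ on $A\cup V\cup\{p\}$ (where $V$ is a carefully chosen set on which $\xi$ accumulates), checks by hand that $h$ is $1$-Lipschitz for a suitable $\lambda$, and then computes $\Phi h(\zeta)<1$ directly. You instead parameterize all extensions by the free value $\alpha=F(p)$, plug in the explicit sup-convolution formula $E_B^+h_\alpha=\min\{\alpha+d(p,\cdot),E_A^+f\}$ over $B=A\cup\{p\}$, and pass to the limit along the net to conclude $\lim_\beta E_A^+f(y_\beta)=\alpha-r$ for every $\alpha\in(\alpha_1,\alpha_2)$, which is absurd since the left-hand side does not depend on $\alpha$. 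Your route avoids the construction of $V$ and the Lipschitz verification entirely, at the modest price of the case split on $r=\infty$ (which you handle with a neat clamping trick, whereas the paper gets $\xi\in\rcomp M$ by reading off a bound on $d(p_i,x_i)$ from $\Phi(E_A^+f)(\zeta)=1$). Two small remarks on presentation: the phrase ``every $1$-Lipschitz function admits a finite continuous extension at $\xi$'' should strictly be phrased via clamping (an unbounded Lipschitz function need not extend to all of $\beta M$, only its truncations do), though this is not load-bearing since your argument works directly with limits along the fixed net; and it is worth noting explicitly that $E_B^+h_\alpha$ restricts to $f$ on $A$ precisely because $\alpha>E_A^-f(p)=\sup_{a\in A}\{f(a)-d(p,a)\}$, and takes value $\alpha$ at $p$ because $\alpha<E_A^+f(p)$.
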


\begin{proof}
Let $p\in M$, $\xi\in\beta M\setminus M$ and $\zeta\in\mathscr{D}(A,f)$ be such that $\pp(\zeta)=(p,\xi)$, and suppose for a contradiction that $p\notin\mathscr{S}(A,f)$. First observe that $\xi\in\rcomp{M}$ (and therefore also $|F(\xi)|<\infty$ for any Lipschitz extension $F$ of $f$ to $M$). Indeed, let $(p_i,x_i)\in\wt{M}$ such that $\zeta=\lim_i(p_i,x_i)$ in $\bwt{M}$ and $p=\lim_ip_i$, $\xi=\lim_ix_i$ in $\beta M$. Because $\zeta \in\mathscr{D}(A,f)$, we have 
$$1=\Phi(E_A^+f)(\zeta)=\lim_i\frac{E_A^+f(p_i)-E_A^+f(x_i)}{d(p_i,x_i)}.$$ Thus eventually 
$$d(p_i,x_i)<2(E_A^+f(p_i)-E_A^+f(x_i))<2(E_A^+f(p)+1-\inf_{y\in A}f(y))<\infty,$$ where the finiteness follows from the boundedness of $A$. Hence $d(\zeta)=d(p,\xi)<\infty$, where $d(\zeta)$ is the evaluation at $\zeta$ of the continuous extension of $d(\cdot,\cdot)$ from $\wt{M}$ to $\bwt{M}$ and $d(p,\xi)$ is the evaluation at $\xi$ of the continuous extension of $d(p,\cdot)$ from $M$ to $\beta M$. So $\xi\in\rcomp{M}$ as claimed.

Let $d(\xi,A\cup\{p\})$ denote the evaluation at $\xi$ of the continuous extension of $d(\cdot,A\cup\{p\})$ to $\beta M$. This value is positive by the compactness of $A$ and finite because $\xi\in\rcomp{M}$. Choose $0<t<d(\xi,A\cup\{p\})$ and denote 
$$V=\set{x\in M\,:\, E_A^+f(x)<E_A^+f(p) \text{ and } d(x,A\cup\{p\})>t}.$$ 
It follows from the assumption on $\zeta$ and the choice of $t$ that $\xi\in\cl{V}^{\beta M}$. Because $p\notin\mathscr{S}(A,f)$, we also have that $E_A^-f(p)<E_A^+f(p)$ by Lemma \ref{lm:ext1p}. Take
$$
0 < \lambda < \min\set{\frac{t}{E_A^+f(p)-E_A^-f(p)},1}
$$
and define a function $h:A\cup V \cup \{p\} \to \RR$ by
\[
h(x)=\begin{cases} f(x) & \text{if }x \in A,\\
E_A^+f(x) & \text{if }x \in V,\\
E_A^+f(p)-\lambda(E_A^+f(p)-E_A^-f(p)) & \text{if }x=p.
\end{cases}
\]
Then $h$ is $1$-Lipschitz because $E_A^+f$ and $E_A^-f$ are $1$-Lipschitz extensions of $f$ to $M$, and for every $x\in V$ we have
\begin{align*}
h(x) - h(p)&=E_A^+f(x)-E_A^+f(p)+\lambda(E_A^+f(p)-E_A^-f(p))\\
&> E_A^+f(x)-E_A^+f(p)\geq -d(x,p)
\end{align*}
and
\begin{align*}
 h(x) - h(p)&=E_A^+f(x)-E_A^+f(p)+\lambda(E_A^+f(p)-E_A^-f(p))\\
&< E_A^+f(x)-E_A^+f(p)+t<t<d(x,p). 
\end{align*}
So, $h$ can be extended to a $1$-Lipschitz extension of $f$ to $M$. But
$$
\Phi h(\zeta)=\frac{h(p)-h(\xi)}{d(\zeta)}=\frac{E_A^+f(p)-E_A^+f(\xi)}{d(\zeta)}-\lambda\frac{E_A^+f(p)-E_A^-f(p)}{d(\zeta)}<\Phi (E_A^+f)(\zeta) = 1,
$$
which contradicts the assumption that $\zeta\in\mathscr{D}(A,f)$.

If $\pp(\zeta)=(\xi,p)$ then we proceed analogously, replacing $E_A^+f$ with $E_A^-f$.
\end{proof}

Combining Lemmas \ref{lm:ext2p}, \ref{lm:extdiag} and \ref{lm:ext_mixed_coordinates}, we obtain the following general statement:

\begin{proposition}
\label{pr:extdiag}
Let $A\subset M$ and $f\in S_{\Lip(A)}$. If $A$ is compact, then $\pp_s(\mathscr{D}(A,f))\cap M\subset\mathscr{S}(A,f)$. 
\end{proposition}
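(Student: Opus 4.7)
The plan is to prove the proposition by a straightforward case analysis, using the three preceding lemmas as black boxes. Pick $p \in \pp_s(\mathscr{D}(A,f)) \cap M$ and let $\zeta \in \mathscr{D}(A,f)$ be a point witnessing this, so that $p \in \{\pp_1(\zeta), \pp_2(\zeta)\}$. The goal is to show $p \in \mathscr{S}(A,f)$, and the identity of the \emph{other} coordinate $\xi \in \beta M$ of $\zeta$ dictates which lemma is applied.

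Concretely, exactly one of the following must hold: either (a) $\xi \in M$ and $\xi \neq p$, so that $\zeta \in \wt{M}$; or (b) $\xi = p \in M$, so that $\pp(\zeta) = (p,p)$; or (c) $\xi \in \beta M \setminus M$. In case (a), since $\Phi F(\zeta) = 1$ for every extension $F \in S_{\Lip(M)}$ of $f$, we have that $F(\pp_1(\zeta)) - F(\pp_2(\zeta)) = d(\pp_1(\zeta),\pp_2(\zeta))$ is a common fixed value, so Lemma \ref{lm:ext2p} yields both coordinates in $\mathscr{S}(A,f)$, in particular $p \in \mathscr{S}(A,f)$. In case (b), Lemma \ref{lm:extdiag} directly gives $p \in \mathscr{S}(A,f)$. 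In case (c), regardless of whether $\pp(\zeta) = (p,\xi)$ or $\pp(\zeta) = (\xi,p)$, Lemma \ref{lm:ext_mixed_coordinates} gives $p \in \mathscr{S}(A,f)$. Exhausting these cases completes the argument.

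There is no real obstacle: all the difficulty has been front-loaded into the three lemmas. The only points requiring mild care are checking that the case dichotomy on $\xi$ is exhaustive (it is, since $\pp_2(\zeta) \in \beta M$ and the complement of $M$ in $\beta M$ is covered by case (c)), and noting that the compactness hypothesis on $A$ is used only in cases (b) and (c)—case (a) would go through for arbitrary $A$, but the stronger assumption is harmless here. A brief remark may also be worth including, to the effect that this inclusion is one-sided: the converse is false in general, since points of $\mathscr{S}(A,f) \setminus A$ need not arise as coordinates of any $\zeta \in \mathscr{D}(A,f)$.
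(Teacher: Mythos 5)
Your proof is correct and is precisely what the paper intends: the authors state Proposition \ref{pr:extdiag} with only the remark ``Combining Lemmas \ref{lm:ext2p}, \ref{lm:extdiag} and \ref{lm:ext_mixed_coordinates}, we obtain the following general statement,'' and your case analysis on the other coordinate of $\zeta$ is the obvious way to carry out that combination. One small point worth making explicit in case (a): the implication from $\pp(\zeta)=(p,q)$ with $p\neq q\in M$ to $\zeta\in\wt{M}$ does hold, since a net $(x_i,y_i)\in\wt{M}$ converging to $\zeta$ in $\bwt{M}$ has $x_i\to p$ and $y_i\to q$ in the metric of $M$, hence $(x_i,y_i)\to(p,q)$ in $\wt{M}$ and therefore in $\bwt{M}$, forcing $\zeta=(p,q)$ by uniqueness of limits; from there $\Phi F(\zeta)=1$ gives $F(p)-F(q)=d(p,q)$ as the common value and Lemma \ref{lm:ext2p} applies.
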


\section{Applications to extremal structure}
\label{sec:extreme}

We will now apply our previous results to analyze the extremal structure of the unit ball of $\lipfree{M}$. The main open problem in this direction is the characterisation of the extreme points of $B_{\lipfree{M}}$. More specifically, the unproven conjecture is that all extreme points must be elementary molecules. The description of those molecules that are extreme points is already known, see \cite[Theorem 1.1]{AP_rmi} (or Corollary \ref{cr:extreme_molecules} below). The conjecture is known to hold when $M$ is proper \cite{Aliaga} or a subset of an $\RR$-tree \cite{APP}. It is also easy to see that any extreme point that is a convex series of molecules must be a molecule itself \cite[Remark 3.4]{APPP}. We will now extend that statement to convex integrals of molecules. In fact, we will prove a slightly stronger statement.

\begin{theorem}
\label{th:extreme_wt}
Let $m$ be an extreme point of $B_{\lipfree{M}}$. Suppose that $m$ has an optimal de Leeuw representation $\mu$ such that $\mu(\wt{M})>0$. Then $m$ is an elementary molecule.

In particular, the conclusion holds if $m$ is a convex integral of molecules.
\end{theorem}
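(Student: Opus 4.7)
The plan is to first reduce to the case where the optimal representation $\mu$ is concentrated entirely on $\wt{M}$, and then exploit the extremality of $m$ via arbitrary measurable splittings of $\mu$ to force $\supp(\mu)\cap\wt{M}$ to consist of a single point. For the reduction, I would set $\nu_1 = \mu\restrict_{\wt{M}}$ and $\nu_2 = \mu\restrict_{\bwt{M}\setminus\wt{M}}$. Proposition~\ref{pr:opr_facts}(d) yields $\nu_1,\nu_2\in\opr{\bwt{M}}$, so $\|\dual{\Phi}\nu_i\|=\|\nu_i\|$ and these norms sum to $\|\mu\|=\|m\|=1$. Proposition~\ref{pr:wt_bochner} gives $\dual{\Phi}\nu_1 \in \lipfree{M}$, and hence $\dual{\Phi}\nu_2 = m - \dual{\Phi}\nu_1 \in \lipfree{M}$ as well. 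Writing $c = \mu(\wt{M}) \in (0,1]$, if $c<1$ then $m = c(\dual{\Phi}\nu_1/c) + (1-c)(\dual{\Phi}\nu_2/(1-c))$ is a non-trivial convex combination in $S_{\lipfree{M}}$, and extremality forces $\dual{\Phi}\nu_1 = c\,m$. In either case, after dividing by $c$ one may replace $\mu$ by an element of $\opr{\wt{M}}$ of unit norm representing $m$.

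Next I would iterate the same decomposition over arbitrary measurable sets. For every Borel $E\subset\wt{M}$, both $\mu\restrict_E$ and $\mu\restrict_{\wt{M}\setminus E}$ belong to $\opr{\wt{M}}$ and induce elements of $\lipfree{M}$ whose norms sum to $1$. Extremality of $m$ (together with the trivial cases $\mu(E)\in\{0,1\}$) then gives
\[
\dual{\Phi}(\mu\restrict_E)=\mu(E)\cdot m
\]
for every such $E$. Testing against an arbitrary $f\in\Lip_0(M)$ yields $\int_E\Phi f\,d\mu = \mu(E)\duality{m,f}$, so $\Phi f = \duality{m,f}$ holds $\mu$-almost everywhere on $\wt{M}$. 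Because $\Phi f$ is continuous on $\bwt{M}$ and every open neighbourhood of a point of $\supp(\mu)$ has positive $\mu$-measure, this identity then propagates to all of $\supp(\mu)\cap\wt{M}$.

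To conclude, I would observe that if $(x_1,y_1),(x_2,y_2)\in\supp(\mu)\cap\wt{M}$ then $\frac{f(x_1)-f(y_1)}{d(x_1,y_1)}=\duality{m,f}=\frac{f(x_2)-f(y_2)}{d(x_2,y_2)}$ for every $f\in\Lip_0(M)$, so $m_{x_1y_1}=m_{x_2y_2}$ in $\lipfree{M}$, which forces $(x_1,y_1)=(x_2,y_2)$ by linear independence of evaluation functionals at distinct points. Thus $\supp(\mu)\cap\wt{M}$ is a single point $(x_0,y_0)$, and Radonness of $\mu$ together with $\|\mu\|=1$ and concentration on $\wt{M}$ force $\mu=\delta_{(x_0,y_0)}$, so $m = m_{x_0y_0}$. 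The ``in particular'' clause is immediate, since by Definition~\ref{df:convex_integral} any convex integral of molecules admits a representation $\mu\in\opr{\wt{M}}$, automatically satisfying $\mu(\wt{M})=\|\mu\|>0$. The main obstacle is the initial reduction: a general optimal representation in $\opr{\bwt{M}}$ need not be concentrated on $\wt{M}$ at all, and the hypothesis $\mu(\wt{M})>0$ is precisely what allows extremality of $m$ inside $B_{\lipfree{M}}$ to absorb the ``mass at infinity'' and bring the argument back inside $\opr{\wt{M}}$, where the splitting-and-extremality machinery can be applied freely.
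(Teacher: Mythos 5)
Your proof is correct, and it shares its first half with the paper's argument: the reduction to $\mu\in\opr{\wt{M}}$ via Propositions~\ref{pr:opr_facts}(d) and~\ref{pr:wt_bochner} plus extremality is exactly the content of Lemma~\ref{lm:lemma_10_aliaga}, which you re-derive inline and then reapply to arbitrary Borel $E\subset\wt{M}$.

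Where you diverge is the finish. The paper applies the same restriction-and-extremality step to $E=U\times V$ for shrinking disjoint neighbourhoods of a fixed $(x,y)\in\supp(\mu)\cap\wt{M}$, then invokes Proposition~\ref{pr:aliaga_lemma_8} to get $\supp(m)\subset U\cup V$, shrinks to $\supp(m)\subset\{x,y\}$, and closes by citing the external fact that elements supported on at most two points are (multiples of) molecules. You instead aggregate the identity $\dual{\Phi}(\mu\restrict_E)=\mu(E)\cdot m$ over \emph{all} Borel $E$, deduce $\Phi f=\duality{m,f}$ $\mu$-a.e.\ and hence (by continuity of $\Phi f$ on $\bwt{M}$ and the definition of support) on all of $\supp(\mu)$, and conclude that all points of $\supp(\mu)\cap\wt{M}$ induce the same molecule; linear independence of Dirac evaluations then forces this set to be a singleton, whence $\mu=\delta_{(x_0,y_0)}$ and $m=m_{x_0y_0}$. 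Your route is slightly more self-contained: it avoids Proposition~\ref{pr:aliaga_lemma_8} and the citation to \cite[Theorem 1.1]{AP_rmi}, and it yields the stronger conclusion that the optimal representation $\mu$ itself, not just $m$, is the canonical Dirac one. The paper's route is shorter on the page and reuses general-purpose support machinery already set up elsewhere. Both are sound.
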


Our proof relies on the following lemma from \cite{Aliaga}. We include its short proof for reference.

\begin{lemma}[{\cite[Lemma 10]{Aliaga}}]
\label{lm:lemma_10_aliaga}
Let $m$ be an extreme point of $B_{\lipfree{M}}$, and let $\mu$ be an optimal de Leeuw representation of $m$. If $\lambda\in\meas{\bwt{M}}$ is such that $0\leq\lambda\leq\mu$ and $\dual{\Phi}\lambda\in\lipfree{M}$, then $\dual{\Phi}\lambda=\norm{\lambda}\cdot m$.
\end{lemma}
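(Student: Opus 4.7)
The plan is to exploit the facts that $\mu$, $\lambda$ and $\mu-\lambda$ all lie in $\opr{\bwt{M}}$ and then to invoke directly the extreme point property of $m$. First, since $m$ is an extreme point of $B_{\lipfree{M}}$, we have $\norm{m}=1$, and because $\mu$ is optimal, $\norm{\mu}=\norm{\dual{\Phi}\mu}=\norm{m}=1$. Set $\lambda_1=\lambda$ and $\lambda_2=\mu-\lambda$, which are both positive measures in $\meas{\bwt{M}}$ by the hypothesis $0\leq\lambda\leq\mu$. Applying Proposition \ref{pr:opr_facts}(c) to $\mu$ and $\lambda_1$ (and again to $\lambda_2=\mu-\lambda_1$), we obtain $\lambda_1,\lambda_2\in\opr{\bwt{M}}$, so that $\norm{\dual{\Phi}\lambda_i}=\norm{\lambda_i}$ for $i=1,2$, and $\norm{\lambda_1}+\norm{\lambda_2}=\norm{\mu}=1$.

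Next, we observe that both $\dual{\Phi}\lambda_1$ and $\dual{\Phi}\lambda_2$ belong to $\lipfree{M}$: the first by hypothesis, and the second because $\dual{\Phi}\lambda_2=\dual{\Phi}\mu-\dual{\Phi}\lambda_1=m-\dual{\Phi}\lambda_1$. If $\norm{\lambda_1}=0$ then $\lambda_1=0$ (as $\lambda_1$ is positive), so $\dual{\Phi}\lambda_1=0=\norm{\lambda_1}\cdot m$ and the conclusion holds trivially; similarly, if $\norm{\lambda_1}=1$ then $\norm{\lambda_2}=0$ forces $\lambda_2=0$, giving $\lambda_1=\mu$ and $\dual{\Phi}\lambda_1=m=\norm{\lambda_1}\cdot m$.

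So we may assume $0<\norm{\lambda_1}<1$. Then we can write the convex combination
\[
m=\dual{\Phi}\mu=\dual{\Phi}\lambda_1+\dual{\Phi}\lambda_2=\norm{\lambda_1}\cdot\frac{\dual{\Phi}\lambda_1}{\norm{\lambda_1}}+\norm{\lambda_2}\cdot\frac{\dual{\Phi}\lambda_2}{\norm{\lambda_2}},
\]
where the two vectors $\dual{\Phi}\lambda_i/\norm{\lambda_i}$ lie in $B_{\lipfree{M}}$ since $\norm{\dual{\Phi}\lambda_i}=\norm{\lambda_i}$. Because $m$ is an extreme point of $B_{\lipfree{M}}$ and $\norm{\lambda_1},\norm{\lambda_2}\in(0,1)$ with $\norm{\lambda_1}+\norm{\lambda_2}=1$, the two convex combinands must both equal $m$; in particular $\dual{\Phi}\lambda_1/\norm{\lambda_1}=m$, i.e. $\dual{\Phi}\lambda=\norm{\lambda}\cdot m$, as desired.

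There is really no subtle step here: the whole argument is the combination of Proposition \ref{pr:opr_facts}(c) (which is what makes the norm additivity $\norm{\lambda_1}+\norm{\lambda_2}=1$ work out) with the very definition of an extreme point. The only thing to double-check is the trivial case analysis when one of $\norm{\lambda_i}$ equals $0$ or $1$, which is handled by the positivity of the measures.
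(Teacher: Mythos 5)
Your proof is correct and follows essentially the same route as the paper: decompose $\mu=\lambda+(\mu-\lambda)$, use Proposition \ref{pr:opr_facts}(c) (and the hypothesis $\dual{\Phi}\lambda\in\lipfree{M}$, hence $\dual{\Phi}(\mu-\lambda)=m-\dual{\Phi}\lambda\in\lipfree{M}$) to exhibit $m$ as a convex combination of elements of $B_{\lipfree{M}}$, and conclude by extremality, with the degenerate cases $\norm{\lambda}\in\{0,\norm{\mu}\}$ handled trivially. Your added explicit checks (membership of $\dual{\Phi}(\mu-\lambda)$ in $\lipfree{M}$ and the trivial cases) are exactly what the paper leaves implicit, so there is nothing to correct.
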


\begin{proof}
Note that $\lambda$ and $\mu-\lambda$ belong to $\opr{\bwt{M}}$ by Proposition \ref{pr:opr_facts}(c). If either of them is $0$ then the lemma is trivial. Otherwise
$$
m = \dual{\Phi}\lambda + \dual{\Phi}(\mu-\lambda) = \norm{\lambda}\cdot\dual{\Phi}\pare{\frac{\lambda}{\norm{\lambda}}} + \norm{\mu-\lambda}\cdot\dual{\Phi}\pare{\frac{\mu-\lambda}{\norm{\mu-\lambda}}}
$$
is a convex combination of elements of $B_{\lipfree{M}}$ by Proposition \ref{pr:opr_facts}(b), hence $m=\dual{\Phi}(\lambda/\norm{\lambda})$ by extremality.
\end{proof}

\begin{proof}[Proof of Theorem \ref{th:extreme_wt}]
By assumption $\mu\restrict_{\wt{M}}\neq 0$, and $\dual{\Phi}(\mu\restrict_{\wt{M}})\in\lipfree{M}$ by Proposition \ref{pr:wt_bochner}. Thus Lemma \ref{lm:lemma_10_aliaga} implies that $m=\dual{\Phi}(\mu\restrict_{\wt{M}}\cdot\norm{\mu\restrict_{\wt{M}}}^{-1})$. So, by substituting $\mu$ with $\mu\restrict_{\wt{M}}\cdot\norm{\mu\restrict_{\wt{M}}}^{-1}\in\opr{\wt{M}}$, we may assume from the start that $\mu$ is concentrated on $\wt{M}$.

Fix $(x,y)\in\supp(\mu)\cap\wt{M}$. Let $U,V$ be disjoint closed neighbourhoods of $x,y$, and put $\lambda=\mu\restrict_{U\times V}$. Then $\lambda\in\opr{\wt{M}}$ by Proposition \ref{pr:opr_facts}(d), and $\dual{\Phi}\lambda\in\lipfree{M}$. Moreover $\norm{\lambda}=\lambda(U\times V)>0$. By Lemma \ref{lm:lemma_10_aliaga} we conclude $m=\dual{\Phi}\lambda/\norm{\lambda}$, and so Proposition \ref{pr:aliaga_lemma_8} implies that $\supp(m)\subset\pp_s(\supp(\lambda))\subset U\cup V$. Since this is true for every choice of $U$ and $V$, we finally get $\supp(m)\subset\set{x,y}$. Thus $m$ is supported on two points at most, and it is then easy to see that it must be an elementary molecule (see e.g. \cite[Theorem 1.1]{AP_rmi}).
\end{proof}

Combining Theorem \ref{th:majorizable_convex_integrals} and Corollaries \ref{cr:rd_cs} and \ref{cr:ud_cs} with Theorem \ref{th:extreme_wt}, we obtain new cases of the extreme point conjecture.

\begin{corollary}
\label{cr:extreme_rud}
If an extreme point of $B_{\lipfree{M}}$ is majorisable on annuli, then it is an elementary molecule.

In particular, if $M$ is uniformly discrete or radially discrete, then all extreme points of $B_{\lipfree{M}}$ are elementary molecules.
\end{corollary}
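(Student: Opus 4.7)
The plan is to deduce this corollary by combining the three earlier results it cites, with essentially no new work. The first assertion is the substantive one; the "in particular" part is then a direct consequence.

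For the first statement, suppose $m$ is an extreme point of $B_{\lipfree{M}}$ that is majorisable. Since $m$ is an extreme point of the unit ball, $\norm{m}=1$, so $m\neq 0$. By Theorem \ref{th:majorizable_convex_integrals}, $m$ is a convex integral of molecules, i.e.\ admits an optimal de Leeuw representation $\mu\in\opr{\wt{M}}$. In particular, $\mu$ is concentrated on $\wt{M}$ and $\norm{\mu}=\norm{m}=1>0$, so $\mu(\wt{M})>0$. Theorem \ref{th:extreme_wt} then applies directly and forces $m$ to be an elementary molecule.

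For the "in particular" clause, assume $M$ is radially discrete. Corollary \ref{cr:rd_cs} says that every element of $\lipfree{M}$ is a convex series of molecules; such elements are in particular convex integrals of molecules, hence majorisable (indeed, they are even induced by Radon measures in the bounded case), and the first part of the corollary finishes the job. Equivalently, one could skip the majorisability step and apply Theorem \ref{th:extreme_wt} directly to the convex-series representation. The case where $M$ is uniformly discrete and bounded is a special case of radially discrete, as explained in the paragraph following Corollary \ref{cr:rd_cs}.

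There is no real obstacle here: all the heavy lifting has already been done in Theorems \ref{th:majorizable_convex_integrals} and \ref{th:extreme_wt} together with Corollary \ref{cr:rd_cs}. The only point one has to be slightly careful about is ensuring that the representation $\mu$ produced by Theorem \ref{th:majorizable_convex_integrals} is nonzero on $\wt{M}$, which is automatic from $\mu\in\opr{\wt{M}}$ and $\norm{m}=1$, so that the hypothesis $\mu(\wt{M})>0$ of Theorem \ref{th:extreme_wt} is satisfied.
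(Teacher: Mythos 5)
Your proof follows exactly the paper's route: the paper's proof of this corollary is literally the one sentence ``Combining Theorem~\ref{th:majorizable_convex_integrals} and Corollary~\ref{cr:rd_cs} with Theorem~\ref{th:extreme_wt}, we obtain a new case of the extreme point conjecture,'' and your argument fills in precisely those details, including the observation that $\mu(\wt{M})=\norm{\mu}=\norm{m}=1>0$ so that the hypothesis of Theorem~\ref{th:extreme_wt} is satisfied. One small caveat: in the ``in particular'' clause you write that convex series of molecules ``are in particular convex integrals of molecules, hence majorisable,'' but the implication from convex integral to majorisable is false in general (Theorem~\ref{th:majorizable_convex_integrals} only gives it under the extra finiteness condition~\eqref{eq:marginals_finite_first_moment}, and the paper explicitly notes after Proposition~\ref{pr:isometric_l1} that convex integrals failing this condition exist); in the radially discrete case majorisability instead follows directly from \cite[Theorem 6.2]{AP_measures}, or one simply applies Theorem~\ref{th:extreme_wt} to the convex-series representation as you suggest in the alternative, so the conclusion is unaffected.
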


This result characterises all extreme points of $B_{\lipfree{M}}$ for uniformly discrete or radially discrete $M$ when paired together with \cite[Theorem 1.1]{AP_rmi} (or alternatively with \cite[Proposition 5.1]{GPPR}, where the description of extreme molecules for bounded and uniformly discrete $M$ was first obtained). Corollary \ref{cr:extreme_rud} also extends \cite[Theorem 3.10]{APPP}, which asserts that any extreme point that can be written as a positive element plus a finitely supported element must be a molecule.

\medskip

Our second result relates to the structure of faces of $B_{\lipfree{M}}$. Recall that a \textit{face}, or \textit{extreme subset}, of a convex set $C$ is a non-empty convex subset $E\subset C$ with the property that $x,y\in E$ whenever $x,y\in C$ and $tx+(1-t)y\in E$ for some $t\in (0,1)$. Extreme points are thus precisely the elements of faces that are singletons.
Relying on the study of Lipschitz extensions carried out in Section \ref{sec:extensions} we can show that, under an assumption of compactness, the support of any internal point in a face of $S_{\lipfree{M}}$ constrains the supports in the rest of the face through an alignment condition.

\begin{theorem}
\label{th:alignment_compact}
Suppose that $S_{\lipfree{M}}$ contains a linear segment $I$, and some element $m$ in the interior of $I$ has compact support. Then
$$
\supp(m') \subset \bigcup\set{[p,q] \,:\, p,q\in\supp(m)\cup\set{0}}
$$
for every $m'\in I$.
\end{theorem}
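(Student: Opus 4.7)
The plan is to reduce the statement to an application of Proposition \ref{pr:extdiag} via de Leeuw representations and Lipschitz extensions. First, I would show that every $f\in\norming{m}$ automatically norms every $m'\in I$. This is a standard fact about interior points of a face: since $m$ lies in the interior of $I$, given $m'\in I$ one can write $m=tm'+(1-t)m''$ with $m''\in I$ and $t\in(0,1)$, and then the identity $1=\duality{f,m}=t\duality{f,m'}+(1-t)\duality{f,m''}$ combined with $\duality{f,m'},\duality{f,m''}\leq 1$ forces $\duality{f,m'}=1$, giving $\norming{m}\subset\norming{m'}$.

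Next, I would set $A=\supp(m)\cup\set{0}$, which is compact by hypothesis. Since $m\in\lipfree{A}$, I can pick $f_0\in S_{\Lip_0(A)}$ with $\duality{m,f_0}=1$; then any $1$-Lipschitz extension $F$ of $f_0$ to $M$ satisfies $F(0)=0$ and $\duality{m,F}=\duality{m,f_0}=1$ because $m$ is supported on $A$, so $F\in\norming{m}\subset\norming{m'}$ for every $m'\in I$. Fixing an optimal de Leeuw representation $\mu'\in\opr{\bwt{M}}$ of $m'$ (Proposition \ref{pr:opr_facts}(a)), Lemma \ref{lm:norming_phi1} yields $\Phi F=1$ on $\supp(\mu')$ for every such extension $F$. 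In the notation of Section \ref{sec:extensions}, this precisely says $\supp(\mu')\subset\mathscr{D}(A,f_0)$.

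To finish, I would combine Proposition \ref{pr:extdiag} (which consumes the compactness of $A$) with the compact-case formula \eqref{eq:saf_compact} to obtain
\[
\pp_s(\supp(\mu'))\cap M\subset\pp_s(\mathscr{D}(A,f_0))\cap M\subset\mathscr{S}(A,f_0)\subset\bigcup\set{[p,q]\,:\,p,q\in A}.
\]
Since $\supp(m')\subset\pp_s(\supp(\mu'))$ by Proposition \ref{pr:aliaga_lemma_8} and trivially $\supp(m')\subset M$, the desired inclusion follows. The main technical work is already carried out in Section \ref{sec:extensions}: Proposition \ref{pr:extdiag}, underpinned by Lemmas \ref{lm:extdiag} and \ref{lm:ext_mixed_coordinates}, is where the compactness hypothesis is essential and is the step that converts the local norming data ``$\Phi F=1$ on $\supp(\mu')$ for every extension $F$ of $f_0$'' into the desired global geometric alignment constraint. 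The current proof is then largely a matter of assembling these ingredients.
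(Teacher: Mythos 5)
Your proposal is correct and follows essentially the same route as the paper's own proof: establish $\norming{m}\subset\norming{m'}$ from interiority, fix a compactly supported norming function, pass through Lemma~\ref{lm:norming_phi1} to land in $\mathscr{D}(A,f_0)$, and then invoke Proposition~\ref{pr:extdiag} together with \eqref{eq:saf_compact}. The only cosmetic difference is that you start from $f_0\in S_{\Lip_0(A)}$ and consider all its $1$-Lipschitz extensions directly, whereas the paper phrases the same step via $\mathscr{D}(m')\subset\mathscr{D}(m)\subset\mathscr{D}(A,f\restrict_A)$ using Lemma~\ref{lm:supp_pm}; these are equivalent.
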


Theorem \ref{th:alignment_compact} generalises \cite[Theorem 2.4]{AFGLZ}, where $M$ is assumed to be finite and $m$ is a molecule.
For the proof, we will use a restatement of Lemma \ref{lm:norming_phi1} using the notation
\begin{equation}
\label{eq:dm}
\mathscr{D}(m) = \set{\zeta\in\bwt{M} \,:\, \text{$\Phi f(\zeta)=1$ for each $f\in\norming{m}$} }
\end{equation}
for $m\in\lipfree{M}$ or $\dual{\Lip_0(M)}$.

\begin{lemma}
\label{lm:supp_pm}
For any $\mu\in\opr{\bwt{M}}$ we have $\supp(\mu)\subset\mathscr{D}(\dual{\Phi}\mu)$. Moreover, if $m=\dual{\Phi}\mu\in\lipfree{M}$ then
$$
\mathscr{D}(m) = \bigcap_{f\in\norming{m}} \mathscr{D}(\supp(m)\cup\set{0},f\restrict_{\supp(m)\cup\set{0}}) \,.
$$
\end{lemma}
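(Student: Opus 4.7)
The first inclusion is simply a reformulation of the ``if'' part of Lemma \ref{lm:norming_phi1}. Indeed, since $\mu\in\opr{\bwt{M}}$ is both positive and optimal, that lemma yields
$$
\norming{\dual{\Phi}\mu} = \set{f\in S_{\Lip_0(M)} \,:\, \Phi f(\zeta)=1 \text{ for all } \zeta\in\supp(\mu)} ,
$$
which is exactly the assertion that every $\zeta\in\supp(\mu)$ lies in $\mathscr{D}(\dual{\Phi}\mu)$.

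For the displayed equality, set $A=\supp(m)\cup\set{0}$. The key tool in both inclusions is the elementary fact, recorded in the preliminaries, that for $m\in\lipfree{M}$ the pairing $\duality{m,g}$ depends only on the restriction $g\restrict_{\supp(m)}$; together with $g(0)=0$ for any $g\in\Lip_0(M)$ this means that $\duality{m,\cdot}$ factors through restriction to $A$.

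For the inclusion $\subset$, fix $\zeta\in\mathscr{D}(m)$, any $f\in\norming{m}$, and any $F\in S_{\Lip(M)}$ with $F\restrict_A=f\restrict_A$. Because $0\in A$ and $f(0)=0$, the extension $F$ vanishes at $0$, so $F\in S_{\Lip_0(M)}$. By the factorization above, $\duality{F,m}=\duality{f,m}=\norm{m}$, hence $F\in\norming{m}$; the definition of $\mathscr{D}(m)$ then forces $\Phi F(\zeta)=1$. Since $F$ was arbitrary, $\zeta\in\mathscr{D}(A,f\restrict_A)$.

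For the inclusion $\supset$, fix $\zeta$ in the right-hand side and $g\in\norming{m}$; I want to conclude $\Phi g(\zeta)=1$. Put $f=g\restrict_A$. Since $\duality{g,m}=\norm{m}$ and $m$ sees $g$ only through $f$, I get $\lipnorm{f}\geq 1$, and $\lipnorm{f}\leq\lipnorm{g}=1$ from the other side, so $f\in S_{\Lip(A)}$ and $\mathscr{D}(A,f)$ is well-defined in the sense of \eqref{eq:daf}. But $g$ itself is a $1$-Lipschitz extension of $f$ to $M$ lying in $S_{\Lip(M)}$, so the hypothesis $\zeta\in\mathscr{D}(A,f)$ directly delivers $\Phi g(\zeta)=1$.

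The argument is essentially bookkeeping; the one step that requires a moment's thought is checking that $g\restrict_A$ actually realises Lipschitz constant $1$ on $A$ (so that $\mathscr{D}(A,g\restrict_A)$ matches the definition used throughout Section \ref{sec:extensions}), and this is settled by the same support-factorization observation that drives the rest of the proof.
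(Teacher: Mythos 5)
Your argument is correct and follows the same route as the paper's (which just cites Lemma \ref{lm:norming_phi1} for the first inclusion and then observes, for the displayed equality, that $\duality{m,f}$ depends only on $f\restrict_{\supp(m)}$). You have simply unpacked the bookkeeping more explicitly, including the minor check that $g\restrict_A$ lies in $S_{\Lip(A)}$, which the paper leaves implicit; note that for the reverse inclusion that check is not even needed, since $g$ itself is a valid candidate $F$ in the definition \eqref{eq:daf} regardless of whether $\lipnorm{g\restrict_A}$ equals $1$.
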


\begin{proof}
Let $m=\dual{\Phi}\mu$. The first statement is immediate from Lemma \ref{lm:norming_phi1} as we have $\Phi f(\zeta)=1$ for all $f\in\norming{m}$ and $\zeta\in\supp(\mu)$. For the second one, it suffices to notice that the value of $\duality{m,f}$ depends only on $f\restrict_{\supp(m)}$. Therefore, if $g\in B_{\Lip_0(M)}$ agrees with $f\in\norming{m}$ on $\supp(m)$, and thus also on $\supp(m)\cup\set{0}$, then $g\in\norming{m}$ as well.
\end{proof}

\begin{proof}[Proof of Theorem \ref{th:alignment_compact}]
Fix $m'\in I$. First notice that $\norming{m}\subset\norming{m'}$. Indeed, since $m$ is an interior point of $I$, we have $m+t(m'-m)\in I$ for $\abs{t}<\varepsilon$ for some $\varepsilon>0$. For any $f\in\norming{m}$ we then have
$$
1 = \norm{m+t(m'-m)} \geq \duality{m+t(m'-m),f} = 1+t\duality{m'-m,f}
$$
for $-\varepsilon<t<\varepsilon$, which implies $\duality{m'-m,f}=0$ and thus $f\in\norming{m'}$ as claimed. It follows that $\mathscr{D}(m')\subset\mathscr{D}(m)$, using the notation \eqref{eq:dm}. Now let $\mu\in\opr{\bwt{M}}$ be an optimal representation of $m'$. Then, applying Proposition \ref{pr:aliaga_lemma_8} and Lemma \ref{lm:supp_pm} we obtain
$$
\supp(m') \subset \pp_s(\supp(\mu)) \subset \pp_s(\mathscr{D}(m')) \subset \pp_s(\mathscr{D}(m)) .
$$
Write $A=\supp(m)\cup\set{0}$ and fix some $f\in\norming{m}$. Then $\mathscr{D}(m)\subset\mathscr{D}(A,f\restrict_A)$ by Lemma \ref{lm:supp_pm} and, since $A$ is compact by assumption, Proposition \ref{pr:extdiag} yields
$$
\supp(m') \subset \pp_s(\mathscr{D}(A,f\restrict_A)) \cap M \subset \mathscr{S}(A,f\restrict_A)
$$
which completes the proof using \eqref{eq:saf_compact}.
\end{proof}

In particular, we have the following. Recall that a metric space is \emph{concave} if it contains no (non-trivial) metric triple, i.e. if it satisfies $d(x,y)<d(x,z)+d(z,y)$ for all distinct points $x,y,z\in M$.

\begin{corollary}
If $M$ is compact and concave then all internal points of a face of $S_{\lipfree{M}}$ have the same support, up to possibly the base point.
\end{corollary}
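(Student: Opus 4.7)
The plan is to apply Theorem \ref{th:alignment_compact} symmetrically to two internal points of the face, and then reduce the resulting alignment condition to a bare inclusion by invoking concavity.

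First I would fix a face $F$ of $S_{\lipfree{M}}$ and two internal points $m_1, m_2 \in F$. By the definition of internal point (i.e.\ a point of the algebraic relative interior of $F$), for each index $i$ and its counterpart $j \neq i$ there exist $m'' \in F$ and $t \in (0,1)$ such that $m_i = (1-t)m_j + tm''$. In particular, $m_i$ lies in the interior of the linear segment $I$ from $m_j$ to $m''$, and $I \subset F \subset S_{\lipfree{M}}$. Because $M$ is compact, every closed subset of $M$ is compact; in particular $\supp(m_i)$ is compact, so the hypothesis of Theorem \ref{th:alignment_compact} is met with $m = m_i$.

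Applying the theorem to the segment $I$ and the endpoint $m_j$, I obtain
$$
\supp(m_j) \subset \bigcup\set{[p,q] \,:\, p, q \in \supp(m_i) \cup \set{0}} .
$$
Now concavity of $M$ means that $M$ contains no non-trivial metric triple, so $[p,q] = \set{p,q}$ whenever $p \neq q$ (and trivially $[p,p] = \set{p}$). Thus the right-hand side of the above display collapses to $\supp(m_i) \cup \set{0}$, giving $\supp(m_j) \subset \supp(m_i) \cup \set{0}$. Interchanging the roles of $m_1$ and $m_2$ yields the reverse inclusion, whence $\supp(m_1) \cup \set{0} = \supp(m_2) \cup \set{0}$, which is exactly the statement that the supports agree up to possibly the base point.

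The derivation is essentially immediate from Theorem \ref{th:alignment_compact}, so no significant obstacle is expected. The only points requiring mild care are unpacking the definition of ``internal point'' to place $m_i$ in the interior of a segment with $m_j$ as an endpoint (so that Theorem \ref{th:alignment_compact} applies directly), and observing that concavity reduces each metric segment $[p,q]$ appearing in the conclusion of that theorem to the two-point set $\set{p,q}$, which is what allows the union of segments to collapse onto $\supp(m_i) \cup \set{0}$.
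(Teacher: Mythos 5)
Your proof is correct and takes exactly the route the paper intends: the corollary is stated immediately after Theorem \ref{th:alignment_compact} with no separate proof, and the argument is the immediate one you give — unpack ``internal point'' to place each $m_i$ in the interior of a segment in $S_{\lipfree{M}}$ with the other internal point on that segment, invoke compactness of $M$ to get compact support, apply the theorem, and use concavity to collapse each metric segment $[p,q]$ to $\set{p,q}$, then symmetrise to obtain $\supp(m_1)\cup\set{0}=\supp(m_2)\cup\set{0}$.
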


We can also obtain the main result from \cite{AP_rmi} as a particular case of Theorem \ref{th:alignment_compact}.

\begin{corollary}[cf. {\cite[Theorem 1.1]{AP_rmi}}]
\label{cr:extreme_molecules}
Let $x\neq y\in M$. Then $m_{xy}$ is an extreme point of $B_{\lipfree{M}}$ if and only if $[x,y]=\set{x,y}$.
\end{corollary}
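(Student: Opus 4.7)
The plan is to handle the two directions separately, with the ``only if'' direction dispatched by an explicit splitting computation and the ``if'' direction reduced to Theorem~\ref{th:alignment_compact} after a base-point-change trick.

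For the ``only if'' direction, I would argue directly: if some $z\in[x,y]\setminus\set{x,y}$ exists, then $d(x,y)=d(x,z)+d(z,y)$ and $\delta(x)-\delta(y)=(\delta(x)-\delta(z))+(\delta(z)-\delta(y))$ combine to give
\[
m_{xy} = \frac{d(x,z)}{d(x,y)}\,m_{xz}+\frac{d(z,y)}{d(x,y)}\,m_{zy},
\]
a proper convex combination of two distinct unit vectors (distinctness follows by comparing supports, each of which differs from $\set{x,y}$), which contradicts extremality. This is just a reindexing of \eqref{eq:molecule_splitting}.

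For the ``if'' direction, the plan is to invoke Theorem~\ref{th:alignment_compact}. Suppose $m_{xy}=\tfrac12(m_1+m_2)$ for some $m_1,m_2\in B_{\lipfree{M}}$ with $m_1\neq m_2$ (the other case is trivial). Pairing with any $f\in\norming{m_{xy}}$ gives $\duality{m_i,f}=1=\norm{m_i}$ for $i=1,2$, which by convexity forces the entire segment $I=[m_1,m_2]$ to lie in $S_{\lipfree{M}}$ with $m_{xy}$ in its interior. Since $\supp(m_{xy})$ is compact, Theorem~\ref{th:alignment_compact} yields an inclusion
\[
\supp(m_i)\subset\bigcup\set{[p,q]\,:\,p,q\in\supp(m_{xy})\cup\set{0}}.
\]

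The main obstacle is that this raw conclusion still admits segments $[0,x]$ and $[0,y]$ anchored at the base point, which the hypothesis $[x,y]=\set{x,y}$ does not constrain. To get around this I would change the base point of $M$ to $x$ before applying the theorem. The canonical base-point-change map $\lipfree{M,0}\to\lipfree{M,x}$ is a linear isometry sending each molecule to the corresponding molecule (a direct check on $\delta_0(z)\mapsto\delta_x(z)-\delta_x(0)$), so extremality and the hypothesis on the metric segment transfer unchanged. In the new picture $\supp(m_{xy})$ collapses to the singleton $\set{y}$ because $\delta_x(x)=0$, and the output of Theorem~\ref{th:alignment_compact} becomes
\[
\supp(m_i)\subset\set{x}\cup\set{y}\cup[x,y]=\set{x,y}
\]
using the hypothesis. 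Consequently $m_1,m_2$ both lie in the one-dimensional subspace $\lipfree{\set{x,y}}=\RR\cdot\delta_x(y)$, on which $m_{xy}$ is evidently an extreme point of the unit ball, forcing $m_1=m_2=m_{xy}$ and completing the argument.
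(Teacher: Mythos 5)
Your proof is correct and follows essentially the same approach as the paper: the forward direction is the molecule-splitting identity \eqref{eq:molecule_splitting}, and the backward direction invokes Theorem~\ref{th:alignment_compact} after reducing by a base-point change (the paper sets $y=0$, you set $x$ as the new base point, but the two are symmetric). The only cosmetic difference is that you spell out why a non-extreme norm-one point lies in the interior of a segment of the sphere, which the paper takes as known.
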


\begin{proof}
The forward implication is immediate from \eqref{eq:molecule_splitting}. Now assume $[x,y]=\set{x,y}$ and suppose that $y=0$ without loss of generality. If $m_{xy}$ is not an extreme point, then it is contained in the interior of a segment $I\subset S_{\lipfree{M}}$. By Theorem \ref{th:alignment_compact}, every $m\in I$ satisfies $\supp(m)\subset [x,y]$, hence $\supp(m)=\set{x}$ and this forces $m=\pm m_{xy}$, which is clearly not possible.
\end{proof}

It should now become apparent that our approach in this paper is, to an extent, a generalisation of the one followed in \cite{AP_rmi}. Indeed, Section \ref{sec:extensions} herein replaces \cite[Lemma 4.1]{AP_rmi}, and Lemma \ref{lm:supp_pm} plays the role of \cite[Lemmas 4.2 and 4.3]{AP_rmi}.

\section*{Acknowledgements}

R. J. Aliaga was partially supported by Grant BEST/2021/080 funded by the Generalitat Valenciana, Spain, and by Grant PID2021-122126NB-C33 funded by MCIN/AEI/10.13039/501100011033 and by ``ERDF A way of making Europe''. E. Perneck\'a was supported by the grant GA\v CR \mbox{22-32829S} of the Czech Science Foundation.

Parts of this research were conducted while the first author visited the Laboratoire de Math\'emat\-iques de Besan\c{c}on in 2021 and the Faculty of Information Technology at the Czech Technical University in Prague in 2022, for which he wishes to express his gratitude. Some initial seeds of this research were formulated during a visit of the third author to the Faculty of Information Technology, Czech Technical University, Prague, in December 2019. The third author would also like to express his gratitude for its hospitality.

% BIBLIOGRAPHY


\begin{thebibliography}{99}
%\normalsize


\bibitem{AFGLZ}
	M. Alexander, M. Fradelizi, L. C. Garc\'ia-Lirola and A. Zvavitch,
	\textit{Geometry and volume product of finite dimensional Lipschitz-free spaces},
	J. Funct. Anal. \textbf{280} (2021), 108849.
	\href{https://doi.org/10.1016/j.jfa.2020.108849}{Paper}
	\href{https://arxiv.org/abs/1911.10642}{Preprint}


\bibitem{Aliaga}
	R. J. Aliaga,
	\textit{Extreme points in Lipschitz-free spaces over compact metric spaces},
	Mediterr. J. Math. \textbf{19} (2022), art. 32.
	\href{https://doi.org/10.1007/s00009-021-01941-z}{Paper}
	\href{https://arxiv.org/abs/2102.01219}{Preprint}

\bibitem{AG}
	R. J. Aliaga and A. J. Guirao,
	\textit{On the preserved extremal structure of Lipschitz-free spaces},
	Studia Math. \textbf{245} (2019), 1--14.
	\href{https://doi.org/10.4064/sm170529-30-11}{Paper}
	\href{https://arxiv.org/abs/1705.09579}{Preprint}

\bibitem{AP_measures}
	R. J. Aliaga and E. Perneck\'a,
	\textit{Integral representation and supports of functionals on Lipschitz spaces},
	Int. Math. Res. Not. \textbf{2023} (2023), 3004--3072.
	\href{https://doi.org/10.1093/imrn/rnab329}{Paper}
	\href{https://arxiv.org/abs/2009.07663}{Preprint}

\bibitem{AP_normality}
	R. J. Aliaga and E. Perneck\'a,
	\textit{Normal functionals on Lipschitz spaces are weak$^\ast$ continuous},
	J. Inst. Math. Jussieu \textbf{21} (2022), 2093--2102.
	\href{https://doi.org/10.1017/S147474802100013X}{Paper}
	\href{https://arxiv.org/abs/2004.14310}{Preprint}

\bibitem{AP_rmi}
	R. J. Aliaga and E. Perneck\'a,
	\textit{Supports and extreme points in Lipschitz-free spaces},
	Rev. Mat. Iberoam. \textbf{36} (2020), 2073--2089.
	\href{https://doi.org/10.4171/rmi/1191}{Paper}
	\href{https://arxiv.org/abs/1810.11278}{Preprint}

\bibitem{APPP}
	R. J. Aliaga, E. Perneck\'a, C. Petitjean and A. Proch\'azka,
	\textit{Supports in Lipschitz-free spaces and applications to extremal structure},
	J. Math. Anal. Appl. \textbf{489} (2020), art. 124128.
	\href{https://doi.org/10.1016/j.jmaa.2020.124128}{Paper}
	\href{https://arxiv.org/abs/1909.08843}{Preprint}


\bibitem{APS_future}
	R. J. Aliaga, E. Perneck\'a and R. J. Smith,
	\textit{de Leeuw representations of functionals on Lipschitz spaces},
	arXiv preprint (2024), arXiv:2403.09546.
	\href{https://arxiv.org/abs/2403.09546}{Preprint}


\bibitem{APP}
	R. J. Aliaga, C. Petitjean and A. Proch\'azka,
	\textit{Embeddings of Lipschitz-free spaces into {$\ell_1$}},
	J. Funct. Anal. \textbf{280} (2021), art. 108916.
	\href{https://doi.org/10.1016/j.jfa.2020.108916}{Paper}
	\href{https://arxiv.org/abs/1909.05285}{Preprint}

\bibitem{ARZ}
	R. J. Aliaga and A. Rueda Zoca,
	\textit{Points of differentiability of the norm in Lipschitz-free spaces},
	J. Math. Anal. Appl. \textbf{489} (2020), art. 124171.
	\href{https://doi.org/10.1016/j.jmaa.2020.124171}{Paper}
	\href{https://arxiv.org/abs/2003.01439}{Preprint}

\bibitem{BL}
    Y. Benyamini and J. Lindenstrauss,
	\textit{Geometric nonlinear functional analysis, vol. 1},
	American Mathematical Society Colloquium Publications, 48.
	American Mathematical Society, Providence, RI, 2000.

\bibitem{Bogachev}
	V. I. Bogachev,
	\textit{Measure theory},
	Springer-Verlag, Berlin-Heidelberg, 2007.

\bibitem{Chiclana}
    R. Chiclana,
    \textit{Cantor sets of low density and Lipschitz functions on $C^1$ curves},
    J. Math. Anal. Appl. \textbf{516} (2022), art. 126489.
    \href{https://doi.org/10.1016/j.jmaa.2022.126489}{Paper}
	\href{https://arxiv.org/abs/2108.02663}{Preprint}

\bibitem{CGMRZ}
	R. Chiclana, L. C. Garc\'ia-Lirola, M. Mart\'in and A. Rueda Zoca,
	\textit{Examples and applications of the density of strongly norm attaining Lipschitz maps},
	Rev. Mat. Iberoam. \textbf{37} (2021), 1917--1951.
	\href{https://doi.org/10.4171/rmi/1253}{Paper}
	\href{https://arxiv.org/abs/1907.07698}{Preprint}
	

\bibitem{DKP_2016}
	A. Dalet, P. L. Kaufmann and A. Proch{\'{a}}zka,
	\textit{Characterization of metric spaces whose free space is isometric to $\ell_1$},
	Bull. Belg. Math. Soc. Simon Stevin \textbf{23} (2016), no. 3, 391--400.
	\href{https://doi.org/10.36045/bbms/1473186513}{Paper}
	\href{https://arxiv.org/abs/1502.02719}{Preprint}

\bibitem{deLeeuw}
	K. de Leeuw,
	\textit{Banach spaces of Lipschitz functions},
	Studia Math. \textbf{21} (1961), 55--66.
	\href{https://doi.org/10.4064/sm-21-1-55-66}{Paper}

\bibitem{Edwards}
    D. A. Edwards,
    \textit{On the Kantorovich-Rubinstein theorem},
    Expo. Math. \textbf{29} (2011), 387--398.
	\href{https://doi.org/10.1016/j.exmath.2011.06.005}{Paper}

\bibitem{GPPR}
	L. Garc\'ia-Lirola, C. Petitjean, A. Proch\'azka and A. Rueda Zoca,
	\textit{Extremal structure and duality of Lipschitz free spaces},
	Mediterr. J. Math. \textbf{15} (2018), art. 69.
	\href{https://doi.org/10.1007/s00009-018-1113-0}{Paper}
	\href{https://arxiv.org/abs/1707.09307}{Preprint}

\bibitem{Godard_2010}
	A. Godard,
	\textit{Tree metrics and their Lipschitz-free spaces},
	Proc. Amer. Math. Soc. \textbf{138} (2010), no. 12, 4311--4320.
	\href{https://doi.org/10.1090/S0002-9939-2010-10421-5}{Paper}
	\href{https://arxiv.org/abs/0904.3178}{Preprint}
 

\bibitem{Godefroy_survey}
	G. Godefroy,
	\textit{A survey on Lipschitz-free Banach spaces},
	Comment. Math. \textbf{55} (2015), 89--118.
	\href{https://doi.org/10.14708/cm.v55i2.1104}{Paper}

\bibitem{GK}
    G. Godefroy and N. J. Kalton,
	\textit{Lipschitz-free Banach spaces},
    Studia Math. \textbf{159} (2003), 121--141.
    \href{https://doi.org/10.4064/sm159-1-6}{Paper}

\bibitem{GLZ}
	G. Godefroy, G. Lancien and V. Zizler,
	\textit{The non-linear geometry of Banach spaces after Nigel Kalton},
	Rocky Mountain J. Math. \textbf{44} (2014), 1529--1583.
    \href{https://doi.org/10.1216/RMJ-2014-44-5-1529}{Paper}

\bibitem{HOP22}
	R. Haller, A. Ostrak and M. P\~oldvere,
	\textit{Diameter two properties for spaces of Lipschitz functions},
	arXiv preprint (2022), arXiv:2205.13287.
	\href{https://arxiv.org/abs/2205.13287}{Preprint}

\bibitem{KaasikVeeorg}
	J. K. Kaasik and T. Veeorg,
	\textit{Weakly almost square Lipschitz-free spaces},
	J. Math. Anal. Appl. \textbf{526} (2023), art. 127339.
	\href{https://doi.org/10.1016/j.jmaa.2023.127339}{Paper}
	\href{https://arxiv.org/abs/2210.09158}{Preprint}

\bibitem{KMS}
	V. Kadets, M. Mart\'in and M. Soloviova,
	\textit{Norm-attaining Lipschitz functionals},
	Banach J. Math. Anal. \textbf{10} (2016), 621--637.
	\href{https://doi.org/10.1215/17358787-3639646}{Paper}
	\href{https://arxiv.org/abs/1601.07821}{Preprint}

\bibitem{Kalton04}
    N. J. Kalton,
    \textit{Spaces of Lipschitz and H\"older functions and their applications},
    Collect. Math. \textbf{55} (2004), 171--217.	
    \href{https://www.raco.cat/index.php/CollectaneaMathematica/article/view/56573}{Paper}

\bibitem{ST}
	W. Schachermayer and J. Teichmann,
	\textit{Characterization of optimal transport plans for the Monge-Kantorovich theorem},
	Proc. Amer. Math. Soc. \textbf{137} (2009), 519--529.
	\href{https://doi.org/10.1090/S0002-9939-08-09419-7}{Paper}
	\href{https://arxiv.org/abs/0711.1268}{Preprint}

\bibitem{Veeorg}
	T. Veeorg,
	\textit{Characterizations of Daugavet points and Delta-points in Lipschitz-free spaces},
    Studia Math. \textbf{268} (2023), 213--233.
	\href{https://doi.org/10.4064/sm220207-30-4}{Paper}
    \href{https://arxiv.org/abs/2111.14393}{Preprint}

\bibitem{Veeorg2}
	T. Veeorg,
	\textit{Daugavet- and Delta-points in spaces of Lipschitz functions},
	arXiv preprint (2022), arXiv:2206.03475.
	\href{https://arxiv.org/abs/2206.03475}{Preprint}

\bibitem{Villani}
	C. Villani,
	\textit{Optimal transport, old and new},
	Grundlehren der mathematische Wissenschaften 338,
	Springer-Verlag, Berlin-Heidelberg, 2009.

\bibitem{Weaver96}
	N. Weaver,
	\textit{Duality for locally compact Lipschitz spaces},
	Rocky Mountain J. Math. \textbf{26} (1996), 337--353.
    \href{https://doi.org/10.1216/rmjm/1181072120}{Paper}

\bibitem{Weaver95}
	N. Weaver,
	\textit{Isometries of noncompact Lipschitz spaces},
	Canad. Math. Bull. \textbf{38} (1995), 242--249.
	\href{https://doi.org/10.4153/CMB-1995-035-3}{Paper}

\bibitem{Weaver_old}
    N. Weaver,
	\textit{Lipschitz algebras},
	World Scientific Publishing Co., River Edge, NJ, 1999.

\bibitem{Weaver2}
    N. Weaver,
	\textit{Lipschitz algebras}, 2nd ed.,
	World Scientific Publishing Co., River Edge, NJ, 2018.

\end{thebibliography}
\end{document}